\documentclass[11pt]{amsart}

\usepackage{graphicx}
\usepackage{xcolor} 
\usepackage{amsmath,amssymb,amsfonts,euscript,enumerate}
\usepackage{amsthm}  
\usepackage{color,wrapfig}
\usepackage{pxfonts}
\usepackage{verbatim}
\usepackage{appendix}

\newcommand{\N}{\mathbb N}
\newcommand{\R}{{\mathbb R}}

\newcommand{\e}{\varepsilon}

\newcommand{\D}{\nabla}
\newcommand{\Di}{\nabla_{i}}

\newcommand{\fr}{\frac}
\newcommand{\p}{\partial}
\newcommand{\rhoe}{\rho^\varepsilon}

\newcommand{\Je}{J^\varepsilon}
\newcommand{\Jes}{J^{\varepsilon 2}}
\newcommand{\ue}{u^\varepsilon}

\newtheorem{thm}{Theorem}[section]
\newtheorem{lem}[thm]{Lemma}
\newtheorem{prop}[thm]{Proposition}
\newtheorem{definition}{Definition}[section]

\newtheorem{remark}[thm]{Remark}
\newtheorem{corollary}{Corollary}[section]
\numberwithin{equation}{section}

\newcommand{\no}[1]{}

\begin{document}

\title{Multi-D Fast Diffusion Equation via Diffusive Scaling of Generalized Carleman Kinetic Equation}

\author{Beomjun Choi}
\address{Beomjun Choi : 
Department of Computational Mathematics and Imaging, National Institute for Mathematical Sciences, Daejeon 305-811, Korea \&
Department of Mathematics, Columbia University, 2990 Broadway, New York, NY 10027, USA}
\email{bc2491@columbia.edu}

\author{Ki-Ahm Lee}
\address{Ki-Ahm Lee :
Department of Mathematical Sciences, Seoul National University, Seoul 151-747, Korea \&
Department of Mathematics, Korea Institute for Advanced Study, Seoul 130-722, Korea}
\email{kiahm@ snu.ac.kr}

\date{}
\begin{abstract}
In this paper, we investigate generalized Carleman kinetic equation for n$\ge$2 and prove convergence towards the solution of equation with fast diffusion or porous medium type, $u_t=\Delta u^m$ ($0\le m\le2$), in its diffusive hydrodynamic limit. Using comparison principle of system combined with fixed speed propagation property of transport equation, we create a new barrier argument for this hyperbolic system. It is crucial to construct explicit local sub and solution of system and this is done by employing an ansatz from second order asymptotic expansion. This allow us prove diffusive limit toward subcritical FDE, which is thought to be difficult with previous method due to the lack of mass conservation. Moreover, we can also prove convergence with growing initial data in slow diffusion range (including $n=1$), which was also unknown before.

\end{abstract}

\maketitle

\tableofcontents
\newpage

\section{Introduction}

In this paper, we are going to consider discrete velocity Boltzmann type equation of a fictitious gas proposed by Carleman \cite{C,C2}.
To describe Carleman's model equation in higher dimension,  let us choose usual coordinate $x=(x_1,\cdots,x_n)\in \R^n$ and  the standard orthonormal basis $\{v_i:\, i=1, \cdots,n\}$  in $\R^n$. And $v_{i+n}$ is the opposite direction of $v_i$, which means $v_{i+n}=-v_i$. Now $u_i$ represents density of particles moving in $v_i$ velocity for $i=1, \cdots, 2n$.

 The multidimensional Carleman  kinetic system can be written as the following: for $n\geq 2$,
  \begin{equation}\begin{aligned}\label{eq-main}
&\text{ $\begin{cases}
{\p}_{t}u^{\e}_{i}+\fr{1}{\e}\D_{{i}}u_{i}^\e=\fr{1}{2n\e^2}\sum_{j=1}^{2n} k(u_{j}^\e,u_{i}^\e,x)(u_{j}^\e-u_{i}^\e)\quad & \text{in $Q$}\quad (i=1,\cdots,n)  \\
{\p}_{t}u^{\e}_{i+n}-\fr{1}{\e}\D_{{i}}u_{i+n}^\e=\fr{1}{2n\e^2}\sum_{j=1}^{2n} k(u_{j}^\e,u_{i+n}^\e,x)(u_{j}^\e-u_{i+n}^\e)\quad & \text{in $Q$} \quad (i=1,\cdots,n)
\end{cases}$}
\\& \text{with initial condition }u^\e_i=g_{i}\ge 0\quad\text{at $t=0$} \quad (i=1,\cdots,2n).
\end{aligned}
\end{equation}In a simpler form of kinetic equation, this can be rewritten  as follows:
\begin{equation}\label{eq-main0}
\begin{aligned}
&\text{${\p}_{t}u^{\e}_{i}+\fr{1}{\e}v_i\cdot \D u_{i}^\e=\fr{1}{2n\e^2}\sum_{j=1}^{2n} k(u_{j}^\e,u_{i}^\e,x)(u_{j}^\e-u_{i}^\e)\quad\text{in $Q$}\quad (i=1,\cdots,2n)$.}
\end{aligned}
\end{equation} 

Physically, $\e$ represents the order of Knudsen number and Mach number in the Boltzmann type equation. Here, Knudsen number is a dimensionless ratio of mean free path to a representative physical length scale while Mach number is the dimensionless ratio of bulk velocity to the reference speed. Roughly, the equation \eqref{eq-main} considers the case when particles collide with other particles after it travels with average distance $\e$, and their bulk velocity is of order $\e$ with respect to. the reference speed.

The limit $\e\to0^+$ is called hydrodynamic or diffusive limit. In this limit, this mesoscopic kinetic equation for rarefied gas particles becomes a certain continuum equation.
In another point of view, the equation \eqref{eq-main} can be derived from the standard case $\e=1$ by taking parabolic scaling $(x,t)\to(\e x, \e^2 t)$ capturing the diffusive behavior in  the equation, and this explains why this limit is called diffusive limit \cite{SR}.

In the equation, $k(u_{j}^\e,u_{i}^\e,x)$ describes the interaction rate or collision rate between  two group of particles, $i$ and $j$ at a position $x$.
 Since $u_i$ is the density of particles with velocity $v_i$,
interaction between $i$ and $j$ results  in velocity changes among particles in these two groups and  it is modeled as a reaction term on the right hand side of \eqref{eq-main}.     Even if the reaction rate k(a,b,x) can be as general as possible, our main interest is the case $k(a,b,x)=k_\alpha(a,b)=(a+b)^\alpha$, $|\alpha| \le 1$. We call  $k_{\alpha}(a,b)$ as a generalized Carleman type reaction rate since the case for $\alpha=1$ and $ n=1$  was  investigated by Carleman for the first time.

After Carleman's model was proposed in \cite{C,C2}, general $\alpha$ model was introduced at \cite{PT}. It has been extended to 3 dimensional space at \cite{TL}, which is sightly different from \eqref{eq-main}. At \eqref{eq-main}, we consider the interaction between each pair of $u_i$s while \cite{TL}  includes interaction only between $u_i$ and the average of $u_i$s. The main mathematical advantage of our model \eqref{eq-main} is that a kind of comparison principle  between two vector valued solutions holds
 for larger class of interaction rates, $k$.
 We will study the rigorous convergence of $u_i$ to its hydrodynamic limit, $\frac{1}{2n}\rho$ as $\e\rightarrow 0$ for all $i=1,\cdots, 2n$ and the limit equation satisfied by $\rho$:

    \begin{equation}\label{eq-main-limit}
    \rho_t=\D\cdot \left(\fr{1}{n k(\fr{\rho}{2n},\fr{\rho}{2n},x)} \D \rho\right).
    \end{equation}

  Finally, it is believed that our model could be used in various modeling problem whose scale is located between kinetics and continuum theory. This model and method have enough flexibility to adopt various situations. For instance, one create model with more than one kinds of molecules or species by increasing velocity components and describing reaction between different molecules separately. We expect our key technique and strategy hold in a similar fashion. Moreover, it could have infinite(continuous) velocity components. See \cite{TL}, for instance, for possible extensions towards this direction. One suitable application is in Mathematical Biology such as chemotaxis or evolution/population dynamics. In chemotaxis, cell dynamics is not in the range of continuum equation,
  which can be models by a kinetic equation and then   standard Keller-Segel Model can be derived by taking drift-diffusion limit. Currently we try to prove certain degenrate nolinear Keller-Segel can be approximated by a  kinetic chemotaxis model . Please refer \cite{CAB} and \cite{HKS} for previous works regarding this.

\subsection{History} As mentioned earlier, the first model was first introduced at \cite{C,C2}. \cite{K}, \cite{McK} considered hydrodynamic limit of Carlemann kinetic equation($\alpha=1$, $n=1$) in the semigroup or probabilistic framework. In \cite{K}, he assumed some zero flux condition on function space to prove semigroup convergence of Carleman equation toward logarithmic diffusion equation. Later in \cite{TL} and \cite{PT}, they studied hydrodynamic limit of this equation for $n=1$, $\alpha<1$ with some weighted $L^1$ integrability condition on initial data. Also in \cite{TL}, they considered 3-D generalization which is slightly different model of ours (at \cite{TL}, each density interacts with mean density of others).
The most notable result recently is in \cite{SV}, where authors incorporate $n=1$ and $|\alpha|\le1$ including $\alpha=1$, and remove other conditions on initial data except for plain $L^1$ integrability. We regard our work as a direct generalization of \cite{SV}.

In $n=1$ equation, \cite{ST} deals with $1<\alpha <\fr{4}{3}$ where its target diffusion equation corresponds to a ultrafast diffusion equation. Reaction $k_\alpha$ with $\alpha>1$, has no $L^1$ contraction property, which could be one of main  difficulties. Fortunately, however, they could get some global entropy estimate by assuming stronger integrability condition on initial data as \cite{TL} did. To get global entropy estimates in \cite{TL},\cite{ST}, those weighted $L^1$ norms should be controlled for a given time interval $\e$ independently. And this implies the target diffusion equation has mass conservation property. The fortune comes for the fact that those fast and ultrafast diffusion equations in $n=1$ have unique maximal solution which has mass conservation property. Without this mass conservation property of target equation, it is not clear wether  we can get similar global entropy estimates done in \cite{TL}\cite{ST}, which makes a main analytic difficulty in our equation. In $n\ge2$, $\alpha\ge \fr{2}{n}$, \eqref{eq-target} no longer has mass conservation and the solutions with $L^1$ initial data will extinct in finite time.

With newly developed technique and Barenblatt type lower bound assumption on the initial data, which looks one natural assumption in FDE literature, we could prove uniform local entropy estimate and this gives local estimates on fluxes. Then we follow compensated convergence argument to prove their diffusive limit are fast diffusion equations.

In different perspectives, there are several results on initial-boundary value problems of \eqref{eq-main}. \cite{F2} proved existence and diffusive limit of the solution with reflecting boundary condition using semigroup theory. Regarding the problem of Dirichlet type, \cite{F} showed the existence of a kind of weak solution and \cite{GS} proved the diffusive limit of the solutions by controlling a relative entropy. On the other hand, currently we are working on a new initial-boundary value problem. On higher dimension setting, our problem assumes prescribed the directional fluxes data. This is generalization of \cite{F2} since it corresponds to $n=1$ and zero flux case. This new condition is seemingly physical assumption and linked to parabolic problem of Neumann type in the limit.

\subsection{Main Theorems}
First, let us  define variables $\rho^\e_{i}=u^\e_i + u^\e_{i+n}, \ \Je_{i}=\fr{1}{\e}(u^\e_i-u^\e_{i+n}),\ \Je=(\Je_{1},\cdots,\Je_{n}),\ \rhoe=\sum_{i=1}^{n}\rhoe_{i}$, and similarily $\rho^\e_{i,j}=u^\e_i + u^\e_{j}, \ \Je_{i,j}=\fr{1}{\e}(u^\e_i-u^\e_{j})$ for future usage. We can rewrite \eqref{eq-main}

\begin{flalign}\label{eq-main2}
\begin{cases}
\p_t \rhoe_i + \D_{i}\Je_i = \fr{1}{2n\e^2}\sum_{j=1}^{2n} \left(k(u_j^\e,u_i^\e,x)(u_j^\e-u_i^\e)+k(u_j^\e,u_{i+n}^\e,x)(u_j^\e-u_{i+n}^\e) \right)\\
\e^2\p_t \Je_{i}+\D_{i}\rhoe_{i}=\fr{1}{2n\e}\sum_{j=1}^{2n}\left((k(u_j^\e,u_i^\e,x)-k(u_j^\e,u_{i+n}^\e,x))u_j^\e\right.
\\\quad\quad\quad\quad\quad\quad\quad\quad\quad\quad\left. -(k(u_j^\e,u_i^\e,x)u_i^\e-k(u_j^\e,u_{i+n}^\e,x)u_{i+n}^\e)\right),
\end{cases}
\end{flalign}
and this implies
\begin{flalign}\label{eq-main3}&
\begin{cases}
\p_t\rhoe + div \Je=0 \\
\e^2\p_t\Je_i+\fr{1}{n}\D_i\rhoe=\fr{1}{2n} \sum_{j=1}^{2n}\left[(k(u_j^\e,u_i^\e,x)-k(u_j^\e,u_{i+n}^\e,x))\left(\fr{2u_j^\e-u_i^\e-u_{i+n}^\e}{2\e}\right)\right.
\\\qquad\left.-(k(u_j^\e,u_i^\e,x)+k(u_j^\e,u_{i+n}^\e,x))\left(\fr{u_i^\e-u_{i+n}^\e}{2\e}\right) \right ]+\fr{1}{n}\D_i\left(\sum_{j=1}^{n}(\rhoe_j-\rhoe_i)\right ).
\end{cases}&
\end{flalign}

Under a suitable condition  on interaction rate $k$ and  initial data (See Definition \ref{def-adm-interaction} and Theorem \ref{thm-dl-1}),
we prove $\Je_{i,j} \rightharpoonup J_{i,j}, u^\e_{i} \to \fr{\rho}{2n}$ for some $J_{i,j}$s and $\rho$ in $L^{2}_{loc}$ in (x,t) as $\e \to 0$ for sufficiently general class of $k$s. From \eqref{eq-main3}, this immediatly implies
\begin{equation}\begin{aligned}
&\p_{t}\rho+divJ=0 \quad \text{ and}\\
&\fr{1}{n}\D_{i}\rho = -k(\fr{\rho}{2n},\fr{\rho}{2n},x)J_i &(i=1,\cdots,n)
\end{aligned}\end{equation}
in distribution sense.
So, we can prove $\rho$ is a weak solution of $\p_t\rho - div(\fr{1}{n\cdot k(\fr{\rho}{2n},\fr{\rho}{2n},x)}\D\rho)=0 $ in $\R^n\times(0,\infty)$. Moreover, this $\rho$ has initial value $\rho(0)=\sum_{i=1}^{2n}g_i$ in trace sense.

In our main cases $k(a,b)=k_{\alpha}(a,b)=(a+b)^\alpha$, we assume more general condition on initial data, Definition \ref{def-adm}, than the condition assumed in Theorem \ref{thm-dl-1}. 
Then limit $\rho$ also solve corresponding Fast Diffusion Equation or Porous Medium Equation with corresponding initial data:
\begin{equation}\label{eq-target}
 \partial_t\rho-\D\cdot\left(\fr{1}{n^{1-\alpha} \cdot \rho^\alpha}\D \rho\right) =0 \text{ in $\mathcal{D}$ with } \rho(0)=\sum_{i=1}^{2n}g_i.
\end{equation}

To state this condition on initial data precisely, let us define a family of function spaces $X_{n,\alpha}$, a collection of $L^1$-perturbations of functions satisfying certain decay and growth estimates at the infinity.
\begin{definition}\label{def-adm}
$X_{n,\alpha} $ is  a collection of   admissible nonnegative initial data $ g \in L^1_{loc}(\R^n)$  such that there exists nonnegative $f \in L^1_{loc}(\R^n)$ with $f-g\in L^1(\R^n)$ and f satisfies 
\begin{equation}\begin{aligned}
&\liminf_{|x|\to\infty}|x|^\fr{2}{\alpha}f >0 & \text{if   } \fr{2}{n}\le \alpha \le 1 \\
\end{aligned}
\end{equation}

and
\begin{equation}\begin{aligned}
&{\limsup_{|x|\to\infty}|x|^{-2}\log (f+1) <\infty} && \text{if   } 0\le\alpha \le1 \\
& \limsup_{|x|\to\infty}|x|^\fr{2}{\alpha}f <\infty && \text{if   } -1\le\alpha<0
\end{aligned}
\end{equation}
\end{definition}

Our main result could be summarized in the following theorem.
\begin{thm}\label{thm-mmainthm} For the interaction rates $k(a,b,x)=(a+b)^\alpha$ with  $|\alpha|\le1$,
suppose $\{u_i^\e\}$ are unique mild solutions of \eqref{eq-main0} (see definition \ref{def-mild}) with initial data $g_i\in X_{n,\alpha}$.
\begin{enumerate}[(i)]
\item

Then, there exists $\overline{ T}= \overline{T}(n,\alpha, \{g_i\})\in (0,\infty]$ such that
\begin{equation}
u^\e_i \to \fr{\rho}{2n}\text{ in }L^1_{loc}(Q_{{T}})\text{ for all }0<T<\overline{T}\text{ as }\e\to0.
\end{equation}
If solution of \eqref{eq-fastdiff} have no uniqueness for the initial data $\sum_{i=1}^{2n} g_i$, the convergence takes along a subsequence  for each given sequence $\e_j\to0$. Otherwise, the convergence is arbitrary as $\e\to0$.

\item Moreover, if initial data $g_i$ has decomposition  \begin{equation}\label{eq-compactcondition}g_i=l_i + h_i,\text{ }h_i\in L^1(\R^n),\text{ and }\ l_i \in C^1(\R^n) \text{ with }\int |Dl_i| < \infty  \end{equation}, then \begin{equation}\rho^\e \to \rho\text{ in }C([0,T], L^1_{loc}(\R^n))\text{ for all } 0<T<\overline{T}.\end{equation}

\item When $\alpha\neq1$, $\rho\in C([0,T], L^1_{loc}(\R^n))$ is the unique weak solution of $\partial_t\rho-\D\cdot(\fr{1}{n^{1-\alpha} \cdot \rho^\alpha}\D \rho) =0$ with initial data $\sum_{i=1}^{2n} g_i$. In case $\alpha=1$, $\rho\in C([0,T], L^1_{loc}(\R^n))$ is some weak solution of $\partial_t\rho-\D\cdot(\fr{1}{\cdot \rho}\D \rho) =0$ satisfying $\liminf_{|x|\to\infty}|x|^2\rho >0$ for all $t\in[0,T]$ with initial data $\sum_{i=1}^{2n} g_i$.

\item $\overline{T}$ could be taken as $\overline{T}= C(n,\alpha)\cdot min(T_1,T_2)$, $T_i\in (0,\infty]$ where 

$$T_1=
\begin{cases}
\begin{aligned}
&[min_{i=1}^{2n}( \liminf_{|x|\to\infty}|x|^\fr{2}{\alpha}f_i )]^\fr{1}{\alpha} && \text{if   }\ \fr{2}{n}\le\alpha\le 1\\
&\qquad\qquad\infty && \text{if   }-1\le\alpha<\fr{2}{n}
\end{aligned}
\end{cases}\quad$$and

$$T_2=
\begin{cases}
\begin{aligned}
&[max_{i=1}^{2n}\limsup_{|x|\to\infty}|x|^{-2}\log (f_i+1)]^{-1}&& \text{if   }\ 0\le\alpha\le 1\\
&[max_{i=1}^{2n}( \limsup_{|x|\to\infty}|x|^\fr{2}{\alpha}f_i)]^\fr{1}{\alpha} && \text{if   } -1\le\alpha<0.
\end{aligned}
\end{cases}$$ Here, negative power of 0 is defined to be $\infty$ and $f_i$ is a function with
$f_i-g_i\in L^1$ given from definition \ref{def-adm}.
 
\end{enumerate}
\end{thm}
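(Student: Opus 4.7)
The plan is to combine three ingredients: explicit sub- and super-solutions of the hyperbolic system built from a second-order asymptotic ansatz centered on Barenblatt/ZKB profiles of the target equation \eqref{eq-target}, the comparison principle for the system, and a compensated-compactness passage to the limit. I carry out the argument in this order.

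\emph{Barrier construction.} In the subcritical fast-diffusion range $\frac{2}{n}\le\alpha\le 1$ I take a Barenblatt profile $U(x,t) = (T-t)^{-\mu}\Phi((T-t)^{-\nu}x)$, whose extinction time $T\propto T_1$ is fixed by $\liminf|x|^{2/\alpha}f_i>0$ so that $U(\cdot,0)\le \min_i 2n f_i$. For the remaining ranges I use appropriate self-similar super-solutions of \eqref{eq-target} whose lifetime is controlled by the $\limsup$ conditions of Definition~\ref{def-adm}, producing $T_2$. With such a $U$ fixed, set
\begin{equation}
\bar u_i^\e(x,t)=\tfrac{1}{2n}U(x,t)+\e V_i(x,t)+\e^2 W_i(x,t),
\end{equation}
with $V_i$ coming from the standard Chapman--Enskog expansion (so that $v_i\cdot\nabla U + 2n\,k_\alpha(U/(2n),U/(2n))V_i$ balances $\partial_t U$ at leading order after summation over $i$) and $W_i$ chosen to make the $O(1)$ residual non-positive when plugged into \eqref{eq-main0}; an analogous object with reversed sign gives a subsolution. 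The admissibility conditions of Definition~\ref{def-adm} are exactly what is needed to arrange $\underline u_i^\e(\cdot,0)\le g_i\le \bar u_i^\e(\cdot,0)$, possibly after an $L^1$ correction absorbing the perturbation $g_i-f_i$.

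\emph{Sandwich and passage to the limit.} The comparison principle for the system, together with the finite speed $1/\e$ of propagation of the transport part (so it suffices to arrange the initial inequalities on $\e$-independent enlargements of any compact set), yields
\begin{equation}
\tfrac{1}{2n}U(x,t)-C\e \le u_i^\e(x,t)\le \tfrac{1}{2n}U(x,t)+C\e
\end{equation}
on compact subsets of $\R^n\times[0,\overline T)$, and hence uniform two-sided pointwise bounds $0<c\le u_i^\e\le C$. These replace the global entropy estimate that is unavailable because of the loss of mass conservation for $\alpha\ge 2/n$. The standard local entropy functional adapted to $k_\alpha$ then produces an $\e$-uniform $L^2_{loc}$ bound on $J^\e$, Aubin--Lions applied to $\partial_t\rho^\e=-\mathrm{div}\,J^\e$ gives $\rho^\e\to\rho$ strongly in $L^1_{loc}$, and the sandwich upgrades this to $u_i^\e\to\rho/(2n)$, establishing (i).

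\emph{Limit equation, part (ii), and main obstacle.} Passing to the limit in the second equation of \eqref{eq-main3}, the $\e^2\partial_t J^\e$ term disappears in distributions, the reaction contributions converge by the strong convergence of $u_i^\e$ together with continuity of $k_\alpha$, and one obtains $\tfrac{1}{n}\nabla_i\rho=-k_\alpha(\rho/(2n),\rho/(2n))J_i$. Combined with the continuity equation $\partial_t\rho+\mathrm{div}\,J=0$ this is \eqref{eq-target} in the distributional sense; uniqueness for FDE/PME with the prescribed decay/growth of initial data (standard for $\alpha\ne 1$) removes the subsequence and identifies $\rho$, giving (iii). Part (ii) follows from the $L^1$-contraction of the kinetic system applied to the decomposition $g_i=l_i+h_i$, which controls the $h_i$-piece in $L^1$ and the $l_i$-piece via its Lipschitz regularity, combined with time-equicontinuity of $\rho^\e$ deduced from the flux bound. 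The main obstacle is the construction of $W_i$: the non-conservative character of the limit for $\alpha\ge 2/n$ forces $U$ to decay in time (extinction), and this time dependence couples delicately with the $\e^{-1}v_i\cdot\nabla$ transport; ensuring that the $O(1)$ residual has a definite sign requires careful bookkeeping of cancellations among $\partial_t U$, the linearization of the collision operator around $V_i$, and $W_i$, and it is precisely these cancellations that produce the explicit exponents appearing in $T_1$ and $T_2$ in part (iv).
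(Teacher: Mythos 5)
Your overall strategy is the right one (second–order ansatz barriers, comparison with finite speed of propagation, local entropy estimate, compactness, then $L^1$-approximation), but there are two genuine gaps. The first is the claimed sandwich $\tfrac{1}{2n}U-C\e\le u_i^\e\le \tfrac{1}{2n}U+C\e$. The comparison principle only propagates an ordering that holds at $t=0$, and a general $g_i\in X_{n,\alpha}$ is nowhere near $O(\e)$-close to a single self-similar profile at the initial time; all one can arrange initially is $g_i\ge \tfrac{3}{4n}\Psi_{R,T}$ below and $g_i\le\tfrac{1}{4n}\overline\Psi_{R,T}$ above, where $\Psi$ and $\overline\Psi$ are two \emph{different} profiles (the lower one decaying like $|x|^{-2/\alpha}$, the upper one growing, e.g.\ Gaussian-in-$x$ for $\alpha\ge0$). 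Hence what the barrier argument actually yields is only local, $\e$-uniform positive upper and lower bounds ($\tfrac{1}{4n}\Psi\le u_i^\e\le\tfrac{3}{4n}\overline\Psi$ on compact sets, for small $\e$), not $O(\e)$ proximity to $U/(2n)$; the barriers are strict sub/supersolutions of the limit equation precisely so that the definite-sign zeroth-order term absorbs the $O(\e)$ and $O(\e^2)$ residuals on the region $|x|\lesssim (T-ct)/\e$, and note this region must cover the domain of dependence $B(0,R+t/\e)$, which is an $\e$-\emph{dependent} enlargement growing like $1/\e$ (this is exactly why the decay rate $|x|^{-2/\alpha}$ is critical), not an $\e$-independent one as you state. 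Because the $O(\e)$ sandwich is unavailable, your subsequent claim that "Aubin--Lions applied to $\partial_t\rho^\e=-\mathrm{div}\,J^\e$ gives $\rho^\e\to\rho$ strongly" and that "the sandwich upgrades this to $u_i^\e\to\rho/(2n)$" does not go through: the flux bound gives only time-regularity of $\rho^\e$ in $H^{-1}_{loc}$, and without the extra hypothesis \eqref{eq-compactcondition} there is no uniform spatial compactness, so Aubin--Lions does not apply. The correct mechanism (and the one the paper uses) is compensated compactness: the div-curl lemma applied to $(\Je,\rhoe)$ and $(0,\rhoe)$ gives $(\rhoe)^2\rightharpoonup\rho^2$, hence $\rhoe\to\rho$ in $L^2_{loc}$, and then $u_i^\e=\tfrac{1}{2n}(\rhoe+\e\sum_k\Je_{i,k})\to\rho/2n$ from the $L^2$ bound on the fluxes.

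The second gap is the treatment of the $L^1$ perturbation. You cannot "arrange $\underline u_i^\e(\cdot,0)\le g_i\le\bar u_i^\e(\cdot,0)$ possibly after an $L^1$ correction": an $L^1$ perturbation can destroy any pointwise lower bound, so no barrier comparison applies to $g_i$ directly. The argument needs an approximation step: truncate $g_i$ between the two barriers, $g_{i,m}=\min\bigl(\max\bigl(g_i,\tfrac{3}{4n}\Psi_{m,T}\bigr),\tfrac{1}{4n}\overline\Psi_{m,T}\bigr)$, note $g_{i,m}\to g_i$ in $L^1$ by dominated convergence (this is where $T_1,T_2$ and the $f_i$'s enter), prove the diffusive limit for each fixed $m$ by the bounded/locally-positive theory, and then transfer to the original data using the $L^1$-contraction property of \emph{both} the kinetic system (valid since $k_\alpha$ is T-dissipative for $|\alpha|\le1$) and the limit FDE/PME, with the $\alpha=1$ case losing uniqueness and hence requiring subsequences. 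Your use of $L^1$-contraction only appears for the $C([0,T],L^1_{loc})$ statement in (ii); it is also an essential ingredient for (i) and (iii), and without it the passage from barrier-compatible data to general $X_{n,\alpha}$ data is missing.
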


\subsection{Outlines}
First in the next two sections, we are going to prove some basic facts of the first oder kinetic equation, \eqref{eq-main}. Definition and existence of weak solution of \eqref{eq-main} will be proved via some apriori estimates and several versions of comparison priciple which we use in most situations throughout this paper, will be introduced.

Next in Section 4, we prove so called local entropy estimate, $\e$-independent local flux $J^\e$ $L^2$ estimate, which allow us to pass the limit in section 5. This entropy estimate is proved in a restricted case when we known our solution of \eqref{eq-main} is locally bounded above and below with positive numbers $\e$ independently. Diffusive limit toward target diffusion equation in these cases will be presented. The estimate and way to pass the limit is just multi-dimensional version of the technique done in \cite{SV}. Upto section 5, we deal with general admissible reaction rates $k(a,b,x)$.

In Section 6 and 7, we turn to interaction rates $k_\alpha(a,b)=(a+b)^\alpha$, which give us FDE or PDE in the limit. Main goal is to broaden the function class of initial data which we can prove diffusive limit convergence. Upto section 5, the only case we know a priori positive local upper and lower bounds is when our initial data are globally bounded above and blow with positive constants. Instead, we assume some specific profile of initial data and prove that solutions of \eqref{eq-main} keep this shape of profile locally in space-time uniformly in $\e$. We show this by employing an ansatz from the second order asymptotic expansion method and delicate use of comparison principle which control the influence from infinite point. Section 6 is devoted to the heuristic computation of our ansatz and showing comparison technique in exemplary case. The complete proof of Theorem \ref{thm-mmainthm} is given in Section 7.

\subsection{Notations and Definitions}

First, if there is no specific mention, the class of interaction rate $k$s in this paper is as follows.
\begin{definition}\label{def-adm-interaction}
Interaction rate k(u,v,x) is admissible if 
\begin{enumerate}
\item k(u,v,x) is measurable real function on $\R_+\times\R_+\times\R$ such that it is symmetric in u,v arguments i.e. $k(u,v,x) =k(v,u,x)$
\item for every $\lambda>0$ there exist $M=M(\lambda)>0$ such that $\fr{1}{M} \le k(u,v,x) \le M$ for all $\fr{1}{\lambda}\le u,v \le \lambda$, and $x\in\R$.
\item k is continuous as a function of u and v for a.e. x. Moreover, function $k(u,v,x)(u-v)$ is uniformly Lipschitz continuous in  $(u,v)$ on $K\times K$ for all $K\subset\subset \R^n_+$.
\end{enumerate}
\end{definition}

These will be some notations that appear frequently.
\begin{enumerate}
\item $Q = \R^n \times (0,\infty)$, $Q_T = \R^n \times (0,T)$, $T>0$, $Q_{R,T}=B_{R}(0)\times (0,T))$
\item $k_\alpha(a,b)= (a+b)^\alpha,\ \text{for}\  a,b>0,\ \alpha\in\R$
\item $sgn^+(a)= \begin{cases}1\quad a>0 \\ 0 \quad a\le0\end{cases}$
\item $v_i=\begin{cases}\overrightarrow{e}_i\quad 1\le i \le n\\ -\overrightarrow{e}_{i-n}\quad n+1\le i \le 2n\end{cases}$
\item $\overrightarrow{\nu}(x)=(\nu_1,\nu_2,\ldots,\nu_n)\in \R^n $ is a unit normal vector to $\partial\Omega$ at $ x\in \partial \Omega$
\item $\rho^\e = \sum_{i=1}^{2n} u^\e_i$, $\rho^\e_i=u^\e_i+u^\e_{i+n}$, $\rho^\e_{i,j}=u^\e_i+u^\e_j$
\item $J^\e=(J^\e_1,J^\e_2,\ldots,J^\e_n)$, $J^\e_i= \fr{\ue_i-\ue_{i+n}}{\e}$, $J^\e_{i,j}=\fr{\ue_i-\ue_{j}}{\e}$

\end{enumerate}
\section{Existence Theory}
Existence of global weak solution comes from a simple  modification of Section 2 of \cite{TL}. We also have an existence of weak solution  with locally bounded initial data, which is modification of Salvarani and Vazquez \cite{SV}.

\begin{lem}\label{lem-Apriori} (A Priori Estimate of (1.1))
If $ u_i \in L ^ {\infty} (\R ^n\times [0,T]) $ is compactly supported solution of (1.1), then, for a convex function $ \phi $ on $ \R $ with  $\phi(0)=0 $,
$$ \int_{\R^n} \sum_{i=1}^{2n} \phi(u_i)\ dx\ \ \text{is nonincreasing in $t\in [0,T]$ .}$$

\begin{proof}
Let us multifly  $\phi'(u_i)$ on \eqref{eq-main}. Taking a summation in $i$  and integration w.r.t. $x,t$, we get
\begin{equation}
\int_{\R^n} \sum_{i=1}^{2n} \phi(u_i)\ dx\bigg\arrowvert^{t_2}_{t_1} + 0 = \fr{1}{2n\e^2}\sum_{i,j=1}^{2n} k(u_{j}^\e,u_{i}^\e,x)(u_{j}^\e-u_{i}^\e)(\phi'(u_i)-\phi'(u_j))\le 0.
\end{equation}
\end{proof}
\end{lem}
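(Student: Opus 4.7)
The plan is to differentiate the entropy functional $E(t) := \int_{\R^n} \sum_{i=1}^{2n} \phi(u_i)\, dx$ in time, substitute the equations \eqref{eq-main0}, and show that the transport term vanishes after integration while the collision term is non-positive by a symmetrization/convexity argument.

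First I would multiply the equation for $u_i$ by $\phi'(u_i)$, so that (formally) $\partial_t \phi(u_i) = \phi'(u_i)\partial_t u_i$ and $\phi'(u_i)\, v_i \cdot \D u_i = v_i \cdot \D \phi(u_i)$, yielding
$$\partial_t \phi(u_i) + \tfrac{1}{\e}\, v_i \cdot \D \phi(u_i) = \tfrac{1}{2n\e^2}\sum_{j=1}^{2n} k(u_j,u_i,x)(u_j-u_i)\,\phi'(u_i).$$
Summing over $i = 1,\dots,2n$ and integrating in $x$, I use that each $u_i$ is compactly supported on $[0,T]$ (which is preserved by the finite propagation speed $1/\e$ of the transport part) together with $\phi(0)=0$ to conclude $\phi(u_i)$ is compactly supported, so that each divergence term $\int_{\R^n} v_i \cdot \D \phi(u_i)\, dx$ vanishes.

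The key algebraic step is to show the collision sum is non-positive. Writing
$$S := \sum_{i,j=1}^{2n} k(u_j,u_i,x)(u_j-u_i)\,\phi'(u_i),$$
I swap the dummy indices $i \leftrightarrow j$ and invoke the symmetry $k(u,v,x) = k(v,u,x)$ from Definition \ref{def-adm-interaction} to get $S = -\sum_{i,j} k(u_j,u_i,x)(u_j-u_i)\,\phi'(u_j)$. Averaging the two expressions yields
$$S = \tfrac{1}{2}\sum_{i,j=1}^{2n} k(u_j,u_i,x)(u_j-u_i)\bigl(\phi'(u_i) - \phi'(u_j)\bigr).$$
Convexity of $\phi$ makes $\phi'$ nondecreasing, so $(u_j-u_i)(\phi'(u_i)-\phi'(u_j)) \le 0$ pointwise, and $k \ge 0$ by admissibility. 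Hence $S \le 0$, which gives $E'(t) \le 0$ and nonincreasingness on $[0,T]$.

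The main obstacle I anticipate is justifying the chain rule and time differentiation under the integral, since depending on the notion of solution used (mild rather than classical) the identity $\partial_t\phi(u_i) = \phi'(u_i)\partial_t u_i$ need not hold pointwise. The natural remedy is to first take $\phi \in C^1$ and regularize either the solution (by standard mollification in $x$, which is compatible with the transport structure since $v_i$ is constant) or the equation, carry out the computation at the regularized level where every step is legitimate, and pass to the limit using the uniform $L^\infty$ bound and dominated convergence. A further approximation of a general convex $\phi$ with $\phi(0)=0$ by smooth convex functions then delivers the full statement.
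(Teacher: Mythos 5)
Your proposal is correct and follows essentially the same route as the paper: multiply by $\phi'(u_i)$, note the transport term integrates to zero thanks to compact support and $\phi(0)=0$, and symmetrize the collision sum using $k(u,v,x)=k(v,u,x)$ so that convexity of $\phi$ (monotone $\phi'$) and $k\ge 0$ give the sign. Your added remarks on regularization and approximating a general convex $\phi$ merely flesh out details the paper leaves implicit.
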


Moreover, following argument of \cite{SV}, we can notice that the system (1.1) propagates along characteristics with speed $ \fr{1}{\e}$. Therefore any weak solution on a cube $\prod_{i=1}^{n} [a_i,b_i] $ at some time $t>0$ is completely determined by initial data on $\prod _{i=1}^{n} [a_i-\fr{t}{\e},b_i+\fr{t}{\e}]$. To prove a priori the $L^{\infty}$ bounds of any solutions with $L^{\infty}$ initial data, we can  approximate $L^{\infty}$ initial data by functions with expanding compactly supports and then apply  Lemma \ref{lem-Apriori}  for  $\phi(x)=|x|^p$. Now by taking  $p \to \infty$ on compact domains and expanding domains,  we will get a priori $L^{\infty}$ bounds of any solutions with $L^{\infty}$ initial data. The same $L^{\infty}$-estimate comes from a  comparison with a constant solution in the next section. This $L^\infty$ a priori estimate allows us to prove the global existence of a unique weak solution with $L^\infty$ initial data. (c.f. Theorem 1 of \cite{SV} thm1, Proposition 2.4 of \cite{TL} ).

\begin{thm}
(Global Existence of Weak solution with $L^{\infty}\cap L^1$initial data)(\it{c.f}. \cite{SV} Thm1, \cite{TL} Prop 2.4)
\item If $g_i\in L^\infty\cap L^1(\R^n)$ for each $i$ and $g_i\geq 0$, then  (1.1) has a global unique weak solution $u_i \in L^\infty(Q) \cap C([0,\infty), L^p (\R^n))$ for all $p\ge1$. By saying weak solution, it means a solution in distribution sense and its initial data is recovered in trace sence as $t \to 0+$.
\end{thm}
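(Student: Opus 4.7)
The plan is to construct a local mild solution by Picard iteration on characteristics, extend it globally using an a priori $L^\infty$ bound coming from Lemma \ref{lem-Apriori} combined with finite propagation speed, and then recover the $L^p$-continuity and uniqueness from the same estimates.

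\textbf{Local existence.} I would rewrite \eqref{eq-main0} in mild form along characteristics,
\begin{equation*}
u_i^\e(x,t) = g_i(x - v_i t/\e) + \fr{1}{2n\e^2}\int_0^t \sum_{j=1}^{2n} \bigl[k(u_j^\e, u_i^\e, \cdot)(u_j^\e - u_i^\e)\bigr]\bigl(x - v_i(t-s)/\e,\, s\bigr)\, ds,
\end{equation*}
and apply Picard iteration in the closed ball of radius $2\|g\|_\infty$ in $L^\infty(\R^n \times [0,\tau])^{2n}$. Condition (3) of Definition \ref{def-adm-interaction} says $(u,v) \mapsto k(u,v,x)(u-v)$ is Lipschitz on compact subsets of $\R_+^2$, so for $\tau=\tau(\|g\|_\infty,\e,n)$ small enough the iteration is a contraction. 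Nonnegativity of the fixed point is preserved by writing the right-hand side as $\fr{1}{2n\e^2}\sum_j k(u_j,u_i,x)u_j$ minus a Lipschitz multiple of $u_i$ and running an ODE comparison along each characteristic.

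\textbf{A priori $L^\infty$ bound and globalization.} To extend past $\tau$ I need a uniform $L^\infty$ estimate. I would approximate $g_i$ by $g_i^R := g_i \chi_{B_R}$, which lies in $L^1 \cap L^\infty$ with compact support, and let $u_i^{\e,R}$ denote the corresponding mild solution. Its support grows at speed at most $1/\e$, so Lemma \ref{lem-Apriori} applies with $\phi(s) = s^p$ and yields $\sum_i \|u_i^{\e,R}(t)\|_{L^p}^p \le \sum_i \|g_i^R\|_{L^p}^p$. By finite propagation speed $u_i^\e$ and $u_i^{\e,R}$ agree on $B_\rho\times\{t\}$ as soon as $R \ge \rho + t/\e$. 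Sending $p\to\infty$ on $B_\rho$ gives $\|u_i^\e(\cdot,t)\|_{L^\infty(B_\rho)} \le \|g_i\|_{L^\infty(\R^n)}$, and since $\rho$ is arbitrary, $\|u_i^\e\|_{L^\infty(Q)} \le \|g\|_\infty$. A standard continuation argument now globalizes the solution.

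\textbf{$L^p$-continuity, uniqueness, and main obstacle.} Applying Lemma \ref{lem-Apriori} to $u_i^{\e,R}$ with $\phi(s)=s^p$, $1\le p<\infty$, and passing $R\to\infty$ by monotone convergence gives $u_i^\e \in L^\infty([0,\infty),L^p(\R^n))$ with norm bounded by $\|g_i\|_{L^p}$; time continuity into $L^p$ follows from the mild formulation, since the collision term is uniformly bounded in $L^\infty_{t,x}$ and transport is an $L^p$-isometry. Uniqueness among bounded mild solutions follows from the local Lipschitz estimate and Gronwall applied to $\sum_i \|u_i^\e - \tilde u_i^\e\|_{L^\infty(Q_t)}$. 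The main subtlety will be verifying that each approximation $u_i^{\e,R}$ itself exists globally, by bootstrapping the same $L^\infty$ bound on the truncation, and that restriction to $B_\rho$ commutes with the $p\to\infty$ limit; modulo this bookkeeping, everything reduces to standard machinery for semilinear transport systems.
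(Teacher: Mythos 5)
Your overall architecture for the a priori bound is exactly the route the paper sketches: truncate the data to compact support, apply Lemma \ref{lem-Apriori} with $\phi(s)=s^p$, use the finite propagation speed $1/\e$ to localize, and let $p\to\infty$; the paper itself delegates the remaining existence machinery to \cite{SV} and \cite{TL}, and your mild formulation with Picard iteration along characteristics is the natural way to supply it. So the $L^p$/$L^\infty$ estimates and the globalization scheme match the paper's outline.

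The genuine gap is in your local existence and uniqueness steps, which use Lipschitz continuity of $(u,v)\mapsto k(u,v,x)(u-v)$ on the whole ball $[0,2\|g\|_\infty]^2$. Definition \ref{def-adm-interaction} only guarantees this on compact sets bounded away from zero density (just as condition (2) bounds $k$ only for $1/\lambda\le u,v\le\lambda$), and for the paper's central examples $k_\alpha(a,b)=(a+b)^\alpha$ with $\alpha\in[-1,0)$ — the PME range covered by this theorem — the nonlinearity genuinely fails to be Lipschitz at $u=v=0$: along $u=v=t$ one has $\partial_u\big[(u+v)^\alpha(u-v)\big]=(2t)^\alpha\to\infty$, and for $\alpha=-1$ the quotient $(u-v)/(u+v)$ is not even continuous at the origin. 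Since nonnegative $L^1\cap L^\infty$ data vanish at infinity and may vanish on large sets, your Picard contraction, your ``Lipschitz multiple of $u_i$'' positivity argument, and your Gronwall uniqueness all break down precisely in this regime. The standard repair, implicit in the cited works and in the paper's own comparison machinery, is to first solve with $\delta$-lifted data $g_i+\delta$: by Lemma \ref{lem-Apriori2} the solution stays above $\delta$, so the dynamics lives in $[\delta,\|g\|_\infty+\delta]$ where $k$ is bounded and the reaction is Lipschitz and your contraction works verbatim; then pass $\delta\to0$ using the uniform $L^\infty$ bound together with comparison (Theorem \ref{thm-MP}) or the $L^1$-contraction of Lemma \ref{lem-contraction}, which is also the natural source of uniqueness in place of an $L^\infty$ Gronwall estimate. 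Without this extra approximation layer your argument proves the statement only for rates that are Lipschitz up to zero density (e.g. $k_\alpha$ with $0\le\alpha\le1$), not for the full admissible class; the remaining bookkeeping you flag (support control of the truncated solutions so that Lemma \ref{lem-Apriori} applies) is minor by comparison but does need the same comparison-with-zero argument.
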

This $L^{\infty}\cap L^1$ existence theorem and finite speed of propagation also imply the following existence theorem with $L^{\infty}_{loc}$ initial data.

\begin{definition}
A weak solution of \eqref{eq-main} with initial $0\le g_i \in L^\infty_{loc}(\R^n)$ is a family of functions $(u_i)_{i=1}^{2n}\in C([0,T],L^\infty_{loc}(\R^n))\cap L^1_{loc}(Q_T)$, $T>0$, such that \eqref{eq-main} is satisfied in the sense of distribution and its initial data is recovered in the trace sense.
\end{definition}

\begin{thm}
(Global Existence of Weak solution with $L^{\infty}_{loc}(\R^n)$ initial data) (\it{c.f}. \cite{SV} Prop 2) If $0\le g_i \in L^{\infty}_{loc}(\R^n)$, system \eqref{eq-main} has a global unique weak solution $(u_i)_{i=1}^{2n}\in C([0,\infty),L^p_{loc}(\R^n))$ for all $p\ge1$.
\end{thm}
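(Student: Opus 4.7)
The plan is to localize using the finite propagation speed $1/\e$ and reduce to the $L^{\infty}\cap L^{1}$ existence theorem just proved. Fix $T>0$ and $R>0$. For each $R'\ge R+T/\e$, truncate the initial data by setting $g_i^{R'}:=g_i\,\chi_{B_{R'}}$. Since $g_i\in L^{\infty}_{loc}(\R^n)$, the truncation $g_i^{R'}$ lies in $L^{\infty}(\R^n)\cap L^{1}(\R^n)$, so the previous theorem furnishes a unique global weak solution $u_i^{R'}\in L^{\infty}(Q)\cap C([0,\infty),L^{p}(\R^n))$ for every $p\ge 1$.

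Next I would verify consistency. For $R+T/\e\le R'<R''$, the data $g_i^{R'}$ and $g_i^{R''}$ agree on $B_{R'}$. By the finite propagation property noted just before the previous theorem---namely, on a coordinate box $\prod_{i=1}^{n}[a_i,b_i]$ at time $t\in[0,T]$ the weak solution is determined by initial data on $\prod_{i=1}^{n}[a_i-t/\e,b_i+t/\e]$---we get $u_i^{R'}=u_i^{R''}$ on $B_R\times[0,T]$. Hence I can unambiguously define $u_i$ on $B_R\times[0,T]$ as this common restriction, and by a standard diagonal procedure as $R,T\uparrow\infty$ obtain $u_i:Q\to[0,\infty)$. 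By construction $u_i\in C([0,\infty),L^{p}_{loc}(\R^n))$ for every $p\ge 1$; the distributional form of \eqref{eq-main} holds against any compactly supported test function (since such a test function is supported in some $B_R\times[0,T]$ where $u_i$ coincides with a genuine $L^{\infty}\cap L^{1}$ solution), and the trace of $u_i$ at $t=0^+$ equals $g_i$ locally by the same reduction.

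Uniqueness follows by the same localization: any two weak solutions with initial datum $g_i$ must each coincide on $B_R\times[0,T]$ with the unique $L^{\infty}\cap L^{1}$ solution corresponding to $g_i\,\chi_{B_{R+T/\e}}$, and hence with each other; letting $R,T\uparrow\infty$ gives global uniqueness.

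The main obstacle is to make the finite-speed-of-propagation step rigorous for merely distributional solutions, since everything hinges on it. This is the point imported from \cite{SV}: one compares two weak solutions on a spatial prism whose lateral faces travel outward at speed $1/\e$, integrates the system against a suitable cutoff in space-time, and controls the reaction contribution using the local Lipschitz continuity of $k(u,v,x)(u-v)$ from Definition \ref{def-adm-interaction}(3) together with the a priori $L^{\infty}$ bound (obtained by comparison with constants in the next section, or equivalently from Lemma \ref{lem-Apriori} with $\phi(x)=|x|^p$ and $p\to\infty$). Once the cone-of-dependence estimate is in hand, the approximation-and-patching argument above closes the existence and uniqueness claims, and the $C([0,\infty),L^{p}_{loc})$ regularity is inherited directly from the approximating $u_i^{R'}$.
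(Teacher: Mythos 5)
Your proposal is correct and follows essentially the same route the paper takes: the paper proves this theorem by combining the $L^{\infty}\cap L^{1}$ existence result with the finite speed of propagation $1/\e$ (truncate the data, solve, and use the domain-of-dependence property to patch and to get uniqueness), exactly as you do, deferring the rigorous cone-of-dependence estimate to the argument of \cite{SV}. Your write-up merely spells out the localization and diagonal step that the paper leaves implicit.
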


Finally, we want to show  that this vector valued solution preserve their positivity and a positive uniform lower bound.
\begin{lem}\label{lem-Apriori2}
If $0\le N\le g_i \in L^{\infty}_{loc}(\R^n)$, then $N\le u_i $ for all $i=1,\ldots,2n$.
\begin{proof}

Choose smooth enough convex function $\phi_N$ such as $$\phi(x)=\begin{cases}|x-N|^4&\quad \text{if }x\le N\\ 0 &\quad \text{otherwise}\end{cases}$$
and rest steps are similar to Lemma \ref{lem-Apriori}.

For compactly supported bounded solution ({\it i.e.} when $g_i$ satisfying this condition), $$\int_{\R^n} \sum_{i=1}^{2n} \phi_N(u_i)\ dx$$ is nonincreasing in $t$, but this is zero at $t=0$. This implies $N\le u_i$. General $L^\infty_{loc}$ case follows from previous standard argument using finite speed of propagation.
\end{proof}
\end{lem}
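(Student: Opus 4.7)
The plan is to mimic the entropy-type a priori estimate of Lemma \ref{lem-Apriori}, but with the convex weight $\phi_N$ designed to vanish on $[N,\infty)$ and penalize values below $N$. A convenient smooth choice is $\phi_N(x) = (N-x)_+^4$, which is a nonnegative $C^3$ convex function, vanishes on $[N,\infty)$, and is strictly positive on $(-\infty, N)$. The key integral identity to aim for is
\begin{equation*}
\frac{d}{dt}\int_{\R^n}\sum_{i=1}^{2n}\phi_N(u_i)\,dx \;=\; \frac{1}{4n\e^2}\int_{\R^n}\sum_{i,j=1}^{2n} k(u_j,u_i,x)(u_j-u_i)\bigl(\phi_N'(u_i)-\phi_N'(u_j)\bigr)\,dx \;\le\;0,
\end{equation*}
where the flux/transport term vanishes upon integration because $\phi_N(u_i)$ will be arranged to have compact support in $x$ for each fixed $t$. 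Symmetrizing in $(i,j)$ as in Lemma \ref{lem-Apriori} and using symmetry of $k$ plus monotonicity of $\phi_N'$ gives the nonpositivity of the reaction sum (each term in $(u_j-u_i)(\phi_N'(u_i)-\phi_N'(u_j))$ is $\le 0$ by convexity, and $k\ge 0$).

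First I would establish the result in the auxiliary case in which $g_i - N$ is compactly supported. By the finite speed of propagation property (recorded in the paragraph after Lemma \ref{lem-Apriori}), for any $T>0$ there is a radius $R_T$ such that $u_i(x,t) \equiv$ (its background value $N$) outside $B_{R_T}$ for all $t\in[0,T]$; hence $\phi_N(u_i(\cdot,t))$ is compactly supported uniformly in $t\in[0,T]$ and the transport term $\frac{1}{\e}\int \nabla_i \phi_N(u_i)\,dx$ is genuinely zero. Since $g_i\ge N$ gives $\phi_N(g_i)\equiv 0$, the monotonicity of $\int\sum\phi_N(u_i)\,dx$ forces $\int\sum\phi_N(u_i(\cdot,t))\,dx = 0$, and because $\phi_N$ is strictly positive below $N$ this yields $u_i \ge N$ a.e.

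For general $g_i \in L^\infty_{loc}$ with $g_i \ge N$, I would localize via finite speed of propagation. Fix any ball $B_R$ and any time horizon $T$. Replace $g_i$ by $\tilde g_i := g_i \mathbf{1}_{B_{R+T/\e}} + N\,\mathbf{1}_{\R^n\setminus B_{R+T/\e}}$; this modified datum satisfies $\tilde g_i - N$ compactly supported and $\tilde g_i \ge N$, so the previous step applies to the corresponding solution $\tilde u_i$, giving $\tilde u_i \ge N$. Finite speed of propagation then identifies $\tilde u_i = u_i$ on $B_R\times[0,T]$, so $u_i\ge N$ there; since $R$ and $T$ are arbitrary, the pointwise bound $u_i\ge N$ holds everywhere.

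The single real obstacle is justifying that the transport term integrates to zero: the function $\phi_N$ does not satisfy $\phi_N(0)=0$ (unless $N=0$), so one cannot quote Lemma \ref{lem-Apriori} verbatim. The fix is exactly the reduction to the compactly supported setting described above, where finite speed of propagation makes $\phi_N(u_i(\cdot,t))$ compactly supported even though $\phi_N(0)\neq 0$. Everything else — the convexity/symmetry cancellation in the reaction term and the localization to $L^\infty_{loc}$ data — is essentially identical to the proof of Lemma \ref{lem-Apriori}.
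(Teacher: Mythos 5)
Your proposal is correct and follows essentially the same route as the paper: the same convex weight $\phi_N(x)=(N-x)_+^4$, the same symmetrization/convexity argument showing $\int\sum_i\phi_N(u_i)\,dx$ is nonincreasing and hence identically zero, and the same reduction of general $L^\infty_{loc}$ data via finite speed of propagation. Your only refinement is to localize around the background value $N$ (taking $g_i-N$ compactly supported so that $\phi_N(u_i(\cdot,t))$ has compact support even though $\phi_N(0)\neq 0$), which is a cleaner rendering of the paper's terse reference to "compactly supported" solutions.
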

\section{Comparison Principle}
\begin{definition}\label{def-dissipativity}(T-dissipaticity of interaction term)({\it c.f.} \cite{SV})
\item
Interaction term, RHS of\eqref{eq-main}, is called T-dissipative if $$A(x):\R^{2n}_+ \longrightarrow \R^{2n},\text{ defined as}$$
\begin{equation}A(x)\left(\begin{array}{c}u_1\\u_2\\\vdots\\u_{2n}\end{array}\right)=
\left(\begin{array}{c}\sum_{j=1}^{2n}k(u_j,u_1,x)(u_j-u_1)\\ \sum_{j=1}^{2n}k(u_j,u_2,x)(u_j-u_2)\\ \vdots \\\sum_{j=1}^{2n}k(u_j,u_{2n},x)(u_j-u_{2n}) \end{array}\right)
\end{equation}
satisfies  $(A\overrightarrow{u}-A\overrightarrow{v})\cdot sgn^+(\overrightarrow{u}-\overrightarrow{v})\le 0$\quad for all $\overrightarrow{u}$ and $\overrightarrow{v}$ at $x$ a.e.
\end{definition}
\begin{lem}\label{Lem-Tdissipativity}
(T-dissipativity of $k_\alpha$\text{ for }$|\alpha|\le 1$)
\item
If $|\alpha|\le 1$, $k_\alpha(a,b)=(a+b)^\alpha$ is T-dissipative.
\end{lem}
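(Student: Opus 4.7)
The plan is to identify, for a given pair $\vec u, \vec v \in \R^{2n}_+$, the index set $I = \{i : u_i > v_i\}$ and to show directly that
\[
S := \sum_{i \in I} \bigl(A_i(\vec u) - A_i(\vec v)\bigr) \le 0,
\]
which is exactly the pairing $(A\vec u - A\vec v)\cdot \mathrm{sgn}^+(\vec u - \vec v)$ in Definition \ref{def-dissipativity}. Introduce the auxiliary flux
\[
F(a,b) := k_\alpha(a,b)(a-b) = (a+b)^\alpha (a-b), \qquad a,b \ge 0,
\]
so that $A_i(\vec u) = \sum_{j=1}^{2n} F(u_j, u_i)$.

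First I would exploit the evident antisymmetry $F(a,b) = -F(b,a)$ to eliminate the interaction between pairs lying both in $I$. Indeed, splitting each sum over $j$ according to whether $j \in I$ or $j \notin I$, the diagonal block
\[
\sum_{i \in I}\sum_{j \in I} F(u_j, u_i)
\]
vanishes upon swapping $i \leftrightarrow j$, and likewise for $\vec v$. Consequently
\[
S \;=\; \sum_{i \in I}\sum_{j \notin I} \bigl[ F(u_j, u_i) - F(v_j, v_i) \bigr].
\]
On this index set we have $u_i > v_i$ while $u_j \le v_j$.

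The key monotonicity step is to verify that, for $|\alpha| \le 1$ and $(a,b)\in\R_+^2$, $F$ is nondecreasing in its first argument and nonincreasing in its second. A direct computation gives
\[
\frac{\partial F}{\partial b}(a,b) = (a+b)^{\alpha-1}\bigl[\alpha(a-b) - (a+b)\bigr],
\]
so the claim $\partial_b F \le 0$ reduces to $\alpha(a-b) \le a+b$. Splitting into the cases $a\ge b$ and $a<b$ and using $|\alpha|\le 1$ together with $a,b\ge 0$, this inequality is elementary; the antisymmetry of $F$ then yields $\partial_a F \ge 0$ for free. With these monotonicities, passing from $(v_j,v_i)$ to $(u_j,u_i)$ decreases the first coordinate and increases the second, so $F(u_j,u_i) \le F(v_j,v_i)$ on each term of the remaining double sum, and therefore $S \le 0$.

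The only delicate point is the monotonicity of $F$: when $\alpha < 1$ the factor $(a+b)^{\alpha-1}$ blows up at the origin, and when $\alpha < 0$ even $F$ itself is unbounded as $a+b \to 0$, so one must argue inside $\R_+^2 \setminus \{0\}$ (which is harmless since $F(0,0)=0$ and the sign inequality extends by continuity on the set where it is needed). Apart from this small caveat the argument is structural: T-dissipativity follows from the antisymmetry and joint monotonicity of the scalar flux $F$, without any cancellation that is specific to the dimension $2n$ of the velocity set, which is why the same proof will work for any analogous symmetric rate satisfying these two properties.
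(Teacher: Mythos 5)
Your proof is correct and is essentially the paper's argument in a slightly different packaging: both rest on the antisymmetry and the joint monotonicity of the scalar flux $f(x,y)=(x+y)^\alpha(x-y)$ (namely $\partial_x f\ge 0$, $\partial_y f\le 0$ on the positive orthant when $|\alpha|\le 1$), and where the paper symmetrizes $K_{u,v}(i,j)+K_{u,v}(j,i)$ and checks four cases, you cancel the block of indices with $u_i>v_i$ against itself by antisymmetry and apply monotonicity on the mixed block, which is the same cancellation/monotonicity dichotomy. Your explicit computation of $\partial_b F=(a+b)^{\alpha-1}[\alpha(a-b)-(a+b)]\le 0$ spells out a step the paper only asserts, and your caveat about the origin is consistent with the paper's restriction to $x,y>0$.
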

\begin{proof} Let us define
$$A_\alpha:\R^{2n}_+ \longrightarrow \R^{2n}$$
as
\begin{equation}A_\alpha\left(\begin{array}{c}u_1\\u_2\\\vdots\\u_{2n}\end{array}\right)=
\left(\begin{array}{c}\sum_{j=1}^{2n}(u_j+u_1)^\alpha(u_j-u_1)\\ \sum_{j=1}^{2n}(u_j+u_2)^\alpha(u_j-u_2)\\ \vdots \\\sum_{j=1}^{2n}(u_j+u_{2n})^\alpha(u_j-u_{2n}) \end{array}\right).
\end{equation}
Then,
\begin{equation}\begin{aligned}(A_\alpha\overrightarrow{u}-A_\alpha\overrightarrow{v})\cdot sgn^+(\overrightarrow{u}-\overrightarrow{v})\\=\sum_{i,j}[(u_j+u_i)^\alpha(u_j-u_i)+(v_j+v_i)^\alpha(v_i-v_j)]sgn^+(u_i-v_i)\\
\triangleq\sum_{i,j}K_{u,v}(i,j)=\fr{1}{2}\sum_{i,j}K_{u,v}(i,j)+K_{u,v}(j,i)
\end{aligned}
\end{equation}

Set $f(x,y):=(x+y)^\alpha(x-y)$ for $x,y \in \R$. Notice that $\partial_x f \ge 0$ and $\partial_y f \le 0$ on $x,y>0$ if $|\alpha|\le1$. Thus, in case $u_j\le v_j$ and $u_i \ge v_i$, $K_{u,v}(i,j)=f(v_j,v_i)-f(u_j,u_i)\le0 $.

Therefore,  for each fixed $i$ and $j$, we have
\begin{displaymath}
\begin{cases}\text{if }u_i\ge v_i\ \&\ u_j\ge v_j\text{, \quad}K_{u,v}(i,j)=-K_{u,v}(j,i) \Rightarrow K_{u,v}(i,j)+K_{u,v}(j,i)=0\\
\text{if }u_i\le v_i\ \&\ u_j\le v_j\text{, \quad}K_{u,v}(i,j)=K_{u,v}(j,i)=0 \Rightarrow K_{u,v}(i,j)+K_{u,v}(j,i)=0\\
\text{if }u_i\ge v_i\ \&\ u_j\le v_j\text{, \quad}K_{u,v}(j,i)=0\,\& \, K_{u,v}(i,j)\le 0\Rightarrow K_{u,v}(i,j)+K_{u,v}(j,i)\le 0\\
\text{if }u_i\le v_i\ \&\ u_j\ge v_j\text{, \quad}K_{u,v}(i,j)=0\,\& \, K_{u,v}(j,i)\le 0 \Rightarrow K_{u,v}(i,j)+K_{u,v}(j,i)\le 0
\end{cases}
\end{displaymath}

Now we have $(A_\alpha\overrightarrow{u}-A_\alpha\overrightarrow{v})\cdot sgn^+(\overrightarrow{u}-\overrightarrow{v})\le 0$.
\end{proof}
In order to state comparison principle in general form, it is useful to
introduce notions of subsolution and supersolution.

\begin{definition}\label{def-sol}
$(u_i)_{i=1}^{2n}\in C([0,T],L^\infty_{loc}(\R^n))\cap L^1_{loc}(Q_T)$ is a weak subsolution (or supersolution)  of \eqref{eq-main} if
\begin{equation}
\begin{cases}
{\p}_{t}u^{\e}_{i}+\fr{1}{\e}\D_{{i}}u_{i}^\e-\fr{1}{2n\e^2}\sum_{j=1}^{2n} k(u_{j}^\e,u_{i}^\e,x)(u_{j}^\e-u_{i}^\e)\le 0\ (\ge 0) \text{ $ i=1,2,\ldots,n$}\\
{\p}_{t}u^{\e}_{i+n}-\fr{1}{\e}\D_{{i}}u_{i+n}^\e-\fr{1}{2n\e^2}\sum_{j=1}^{2n} k(u_{j}^\e,u_{i+n}^\e,x)(u_{j}^\e-u_{i+n}^\e)\le 0\ (\ge 0)\text{ $i=1,2,\ldots,n$}\\

\end{cases}
\end{equation}in distribution sense.
\end{definition}

\begin{lem}\label{lem-contraction}($L^1$-contraction)
Suppose interaction rate $k$ is T-dissipative and $u_i,\ v_i\in W^{1,1}_{loc}(R_+\times \R^n)\cap C(\R_+,W^{1,1}_{loc}(R^n))$ are weak subsolution and supersolution of \eqref{eq-main}   respectively. Then, for all $t_2>t_1$, we have

\begin{equation}
\label{eq-flux}
{\int \displaylimits_{B(0,R)}\sum_{i=1}^{2n}(u_i-v_i)^+dx }\bigg\arrowvert_{t=t_1}^{t=t_2}\le -\int \displaylimits_{t_1}^{t_2}\int \displaylimits_{\partial B(0,R)} \sum_{i=1}^n \left[\fr{(u_i-v_i)^+}{\e}-\fr{(u_{i+n}-v_{i+n})^+}{\e}\right]\nu_i d\sigma_x dt
\end{equation}

\begin{equation}\label{eq-contraction-approx}
\left[\int_{B(0,R)}\sum_{i=1}^{2n}(u_i-v_i)^+dx\right](t_2) \le \left[\int_{B(0,R+\fr{t_2-t_1}{\e})}\sum_{i=1}^{2n}(u_i-v_i)^+dx \right](t_1)
\end{equation}
\begin{equation}\label{eq-contraction}
\left[\int_{\R^n}\sum_{i=1}^{2n}(u_i-v_i)^+dx\right](t_2) \le \left[\int_{\R^n}\sum_{i=1}^{2n}(u_i-v_i)^+dx \right](t_1)
\end{equation}
\end{lem}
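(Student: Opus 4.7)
The strategy is the Kruzhkov $L^1$-contraction method, adapted to the kinetic system \eqref{eq-main}. Setting $w_i := u_i - v_i$ and subtracting the sub/supersolution inequalities from Definition~\ref{def-sol} gives
\[
\p_t w_i + \fr{1}{\e}\D_i w_i \le \fr{1}{2n\e^2}\bigl[A(x)\vec u - A(x)\vec v\bigr]_i \quad (i\le n),
\]
together with the analogous inequality with $-\fr{1}{\e}\D_i$ in place of $+\fr{1}{\e}\D_i$ for the indices $i+n$. I would multiply each by a smooth nondecreasing approximation $\mathrm{sgn}^+_\delta$ of the positive-part indicator; the $W^{1,1}_{loc}$ hypothesis legitimizes the chain rule, so the transport and time-derivative terms become $\p_t \Psi_\delta(w_i) \pm \fr{1}{\e}\D_i \Psi_\delta(w_i)$ for a primitive $\Psi_\delta$ with $\Psi_\delta' = \mathrm{sgn}^+_\delta$. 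Summing over $i=1,\dots,2n$ and letting $\delta\to 0^+$, the right-hand side of the summed inequality is $(A\vec u - A\vec v)\cdot \mathrm{sgn}^+(\vec u - \vec v)$, which is nonpositive by T-dissipativity (Lemma~\ref{Lem-Tdissipativity}). This yields the distributional bound
\[
\p_t \sum_{i=1}^{2n} w_i^+ + \fr{1}{\e}\sum_{i=1}^{n}\D_i\bigl[w_i^+ - w_{i+n}^+\bigr] \le 0.
\]

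To obtain \eqref{eq-flux}, I would integrate this inequality over $B(0,R)\times(t_1,t_2)$; by the divergence theorem, the spatial integral of the flux term yields exactly the boundary expression on the right of \eqref{eq-flux}, with the signs dictated by $v_i = e_i$ versus $v_{i+n} = -e_i$. For the finite-speed inequality \eqref{eq-contraction-approx}, I would instead integrate on the shrinking ball $B(0,R(t))$ with $R(t) = R + (t_2-t)/\e$. Reynolds' transport theorem produces an extra boundary term $\dot R(t)\int_{\partial B(0,R(t))} \sum_i w_i^+\, d\sigma = -\fr{1}{\e}\int_{\partial B(0,R(t))} \sum_i w_i^+\,d\sigma$. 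Combining this with the flux from the PDE, the net boundary integrand becomes
\[
\sum_{i=1}^n \bigl[(u_i-v_i)^+(1+\nu_i) + (u_{i+n}-v_{i+n})^+(1-\nu_i)\bigr] \ge 0,
\]
since $|\nu_i|\le 1$. Hence $t\mapsto \int_{B(0,R(t))} \sum_i w_i^+\,dx$ is nonincreasing, and comparing $t=t_1$ with $t=t_2$ gives \eqref{eq-contraction-approx}. The global bound \eqref{eq-contraction} then follows by sending $R\to\infty$ in \eqref{eq-contraction-approx} via monotone convergence (the statement being trivial if the right-hand side is infinite).

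The main technical obstacle is legitimizing the chain-rule step despite the nonsmoothness of $(\cdot)^+$; the Kruzhkov-style mollification $\mathrm{sgn}^+_\delta$ described above handles this, with the $W^{1,1}_{loc}$ hypothesis ensuring a routine $\delta\to 0$ limit. A secondary technicality is that boundary traces on $\partial B(0,R)$ are only well-defined for a.e.\ radius; this is addressed by the co-area formula, first proving \eqref{eq-flux} along a sequence of admissible radii $R_k$ and then passing to the limit. Finally, in the moving-ball step one must apply Reynolds' theorem to the regularized quantity $\Psi_\delta(w_i)$ before sending $\delta\to 0$, so that all integrations by parts are performed on smooth objects.
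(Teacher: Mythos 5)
Your proposal is correct and takes essentially the same route as the paper: both derive the distributional inequality $\partial_t\sum_{i=1}^{2n}(u_i-v_i)^+\le -\fr{1}{\e}\sum_{i=1}^{n}\partial_i\bigl[(u_i-v_i)^+-(u_{i+n}-v_{i+n})^+\bigr]$ from the chain rule for positive parts of $W^{1,1}_{loc}$ functions together with T-dissipativity, then obtain \eqref{eq-flux} by integrating over $B(0,R)\times(t_1,t_2)$ and applying the divergence theorem, \eqref{eq-contraction-approx} from the finite propagation speed $\fr{1}{\e}$, and \eqref{eq-contraction} by letting $R\to\infty$. Your Kruzhkov mollification and the moving-ball (Reynolds transport) computation merely make explicit the chain-rule and finite-speed steps that the paper states in a single line.
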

\begin{proof}

Since $u_i,\ v_i\in W^{1,1}_{loc}(R_+\times \R^n)\cap C(\R_+,W^{1,1}(R^n))$, we can notice that  integrals above are well defined.
And for  $f(x)\in W^{1,1}_{loc}$, we have $\fr{\partial f^+}{\partial x_i} = sgn^+(f)\cdot \fr{\partial f}{\partial x_i}$. Using this and Definition \ref{def-sol}, we get

\begin{equation} \begin{aligned}
\sum_{i=1}^{2n}\fr{\partial (u_i-v_i)^+}{\partial t}\le -\sum_{i=1}^n \fr{1}{\e}\partial_i[(u_i-v_i)^+-(u_{i+n}-v_{i+n})^+]\\+(A\overrightarrow{u}-A\overrightarrow{v})\cdot sgn^+(\overrightarrow{u}-\overrightarrow{v})\\
\le-\sum_{i=1}^n \fr{1}{\e}\partial_i[(u_i-v_i)^+-(u_{i+n}-v_{i+n})^+]
\end{aligned}\end{equation}
from  Lemma \ref{Lem-Tdissipativity}.
It  holds in sense of the distribution and a.e.
By taking an integral  in the inequality above and applying  the divergence theorem, we will have \eqref{eq-flux}.  \eqref{eq-contraction-approx} comes from finite speed of propagation, $\fr{1}{\e}$. Finally, \eqref{eq-contraction} is given by taking a  limit of \eqref{eq-contraction-approx} as $R\to\infty$.

\end{proof}

\begin{thm}(Comparison Principle)\label{thm-MP}
Under the same assumption with Lemma \ref{lem-contraction}, then we have following comparison theorems:

\begin{enumerate}[(i)]

\item Assume that $[(u_i-u_{i+n})-(v_i-v_{i+n})]\nu_i \ge 0$ a.e. on $\partial B(0,R)\times (t_1,t_2)$ for all i=1,$\ldots$,n (i.e. outward flux of u is greater than that of v in any directions at any points on the boundary). And if  $\ u_i\le v_i\ \ \text{ on } B(0,R)\times\{t_1\}$ for all
$ i=1,\ldots,2n$, then we have
$u_i\le v_i\  \text{ on } B(0,R)\times(t_1,t_2)$
  for all $ i=1,\ldots,2n$.
\item
$ \ u_i\le v_i\ \text{ on } B(0,R+\fr{t_2-t_1}{\e})\times\{t_1\}$ for all
$ i=1,\ldots,2n$  implies that $ \  u_i\le v_i\ \text{ on } B(0,R)\times(t_1,t_2)$  for all
$ i=1,\ldots,2n$.
\item
$\ u_i\le v_i\  \text{ on } \R^n\times\{t_1\}$ for all
$ i=1,\ldots,2n$  implies that $u_i\le v_i\ \text{ on } \R^n\times(t_1,t_2)$  for all
$ i=1,\ldots,2n$.
\end{enumerate}

\begin{proof}
Results (2) and (3) are immediate from Lemma \ref{lem-contraction}. In order to prove (1), it is suffices to show $[(u_i-u_{i+n})-(v_i-v_{i+n})]\nu_i \ge 0$ implies $[(u_i-v_i)^+-(u_{i+n}-v_{i+n})^+]\nu_i\ge0$. One can verify this from case-by-case argument.
\end{proof}

\end{thm}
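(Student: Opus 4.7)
The plan is to use Lemma \ref{lem-contraction} as the master tool, since it already encodes the competition between the transport flux across $\partial B(0,R)$ and the dissipation coming from the reaction term. Parts (ii) and (iii) will be immediate consequences of \eqref{eq-contraction-approx} and \eqref{eq-contraction} once the relevant initial positive-part integral vanishes, and the main work lies in part (i), which needs the boundary-flux identity \eqref{eq-flux}.

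For (iii), the hypothesis $u_i\le v_i$ on $\R^n\times\{t_1\}$ makes the right-hand side of \eqref{eq-contraction} equal to $0$, so for every $t\in(t_1,t_2)$ one has $\int_{\R^n}\sum_i(u_i-v_i)^+\,dx(t)\le 0$, which forces each $(u_i-v_i)^+\equiv 0$ a.e., i.e.\ $u_i\le v_i$ globally on $(t_1,t_2)$. For (ii), fix any $t\in(t_1,t_2]$ and apply \eqref{eq-contraction-approx} on the time interval $(t_1,t)$ with the same ball $B(0,R)$; since $B(0,R+(t-t_1)/\varepsilon)\subseteq B(0,R+(t_2-t_1)/\varepsilon)$, the hypothesis makes $(u_i-v_i)^+\equiv 0$ on the expanded ball at $t_1$, hence $\int_{B(0,R)}\sum_i(u_i-v_i)^+\,dx(t)\le 0$ and therefore $u_i\le v_i$ on $B(0,R)\times(t_1,t_2)$.

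For (i), the main step, I would start from \eqref{eq-flux}. The initial condition $u_i\le v_i$ on $B(0,R)\times\{t_1\}$ kills the $t=t_1$ term on the left-hand side, so it suffices to show that the boundary integrand $\sum_{i=1}^n\bigl[(u_i-v_i)^+-(u_{i+n}-v_{i+n})^+\bigr]\nu_i$ is pointwise nonnegative on $\partial B(0,R)$ a.e. This reduces to the elementary real-variable statement: \emph{if $(a-b)s\ge 0$ for $a,b,s\in\R$, then $(a^+-b^+)s\ge 0$.} When $s>0$ one has $a\ge b$, and the conclusion follows from the monotonicity of $x\mapsto x^+$; $s<0$ is symmetric, and $s=0$ is trivial. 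Applying this pointwise with $a=u_i-v_i$, $b=u_{i+n}-v_{i+n}$, $s=\nu_i$ for each $i$ turns the hypothesis of (i) into the desired sign, so \eqref{eq-flux} yields $\int_{B(0,R)}\sum_i(u_i-v_i)^+\,dx(t_2)\le 0$; repeating the argument with $t_2$ replaced by any $t\in(t_1,t_2)$ extends the conclusion to the whole cylinder.

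The main obstacle is conceptual rather than technical: one must match the sign convention hidden in \eqref{eq-flux} (where outward flux appears with a minus sign) with the directional hypothesis stated on $(u_i-u_{i+n})-(v_i-v_{i+n})$, and verify that truncation by $(\cdot)^+$ preserves the relevant inequality. Once this compatibility is observed, the theorem follows as a direct corollary of Lemma \ref{lem-contraction}.
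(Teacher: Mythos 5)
Your proposal is correct and follows essentially the same route as the paper: parts (ii) and (iii) read off directly from \eqref{eq-contraction-approx} and \eqref{eq-contraction}, and part (i) reduces, via \eqref{eq-flux} with vanishing initial term, to showing that $[(u_i-u_{i+n})-(v_i-v_{i+n})]\nu_i\ge 0$ forces $[(u_i-v_i)^+-(u_{i+n}-v_{i+n})^+]\nu_i\ge 0$. Your elementary observation that $(a-b)s\ge 0$ implies $(a^+-b^+)s\ge 0$, applied with $a=u_i-v_i$, $b=u_{i+n}-v_{i+n}$, $s=\nu_i$, is precisely the case-by-case verification the paper leaves to the reader.
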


\begin{remark}
The fact $u_i,\ v_i\in W^{1,1}_{loc}(R_+\times \R^n)\cap C(\R_+,W^{1,1}(R^n))$ means you have well-defined $L^1$ boundary value function from trace theorem. Moreover, you can think of any special time slice of $u_i, v_i$ since they belong to $ C(\R_+,W^{1,1}(R^n))$.
\end{remark}

\begin{remark}
Result (2) and (3) are also true when your weak solution is in the class $u_i \in L_{loc}^\infty(Q) \cap C([0,\infty), L^1_{loc} (\R^n))$ we defined earlier. If initial data is in $W^{1,1}_{loc}(\R^n)\cap L^\infty_{loc}(\R^n)$, your weak solution is in $u_i,\ v_i\in W^{1,1}_{loc}(R_+\times \R^n)\cap C(\R_+,W_{loc}^{1,1}(R^n))$ (see \cite{TL}), and you get the desired result via approximation (density argument) of initial data and employing $L^1$-contraction we proved.
\end{remark}

\section{Flux Estimate via Local Entropy}\label{sec-entropy}

To get convergence, we need a flux estimate , $\int_{0}^{T} \int_\Omega \sum_{i,j} \Jes _{i,j} dxdt $, uniformly in $\e$. This is crucial step and proof of this in various cases will be our main work. However, in case when we have good positive upper and lower bound on initial data (therefore, it is not in $L^1$), we can just mimic the estimates done in \cite{SV}.

\begin{prop}\label{prop-local}
If $\ue_i$ i=1,\ldots,2n are uniformly  bounded above and below, say $0<\fr{1}{M}  \leq \ue_i \leq M <\infty$ on $K\times [0.T]$. Then for all $\phi \in C^\infty_0(\R^n)$ with $supp(\phi) \subset K$, $$\int_0^T \int \sum_{i,j} k(u_{i}^\e,u_{j}^\e,x)^2\Jes_{i,j} \phi^2 dx dt < C$$ for some $C=C(\phi,M,k,T,n)>0$, but independent of $\e$.

\begin{proof}
From the condition of k, there is $A=A(M)>0$ such that $$\fr{1}{A}\le k(u_i,u_j,x) \le A.$$
By multiplying $\fr{1}{\ue_i}\phi^2$ both sides of \eqref{eq-main} and integrating w.r.t. x, we obtain

\begin{equation}\begin{aligned}		
	\partial_t \int \sum_{i=1}^{2n} \log(u_{i}^\e )\phi^2dx + \fr{1}{\e} \int \sum_{i=1}^{n}\Di  \log(\ue_i )-\Di \log(\ue_{i+n}) \phi^2dx \\=\fr{1}{4n\e^2}\int\sum_{i\neq j, 1\leq i,j\leq2n} k(\ue_i,\ue_j,x)(\ue_j-\ue_i)(\fr{1}{\ue_i}-\fr{1}{\ue_j})\phi^2dx,
\end{aligned}\end{equation}
and integration by parts gives us
\begin{equation}\label{eq-entropy-1}
\begin{aligned}		
	\partial_t \int \sum_{i=1}^{2n} \log(u_{i}^\e )\phi^2dx &= \fr{2}{\e} \int \sum_{i=1}^{n}\ ( \log(\ue_i )-\log(\ue_{i+n}))   \phi\Di \phi dx\\&+ \fr{1}{4n}\int\sum_{i\neq j, 1\leq i,j\leq2n} \fr{k(\ue_i,\ue_j,x)(\Je_{j,i})^2}{(\ue_i\ue_j)}\phi^2dx.
\end{aligned}\end{equation}

Meanwhile, \begin{equation} \label{eq-entropy-2}
	\begin{aligned}
		 -\fr{2}{\e} ( \log(\ue_i )-\log(\ue_{i+n}))   \phi\partial_i \phi &= \fr{2\phi\partial_i\phi}{\e}\log(\fr{\ue_{i+n}}{\ue_{i}})\\
		 &\leq \fr{2\phi\partial_i\phi}{1}\fr{\Je_i}{\ue_{i}} \quad (\because \log(1+r) \leq r)\\&
		 \leq \fr{1}{4nA}\fr{J^{\e 2}_i\phi^2}{(\ue_i\ue_{i+n})} + 4nA\fr{\ue_{i+n}}{\ue_{i}}(\partial_i\phi )^2.
		 \end{aligned}
\end{equation}

Using \eqref{eq-entropy-1} and \eqref{eq-entropy-2}, we have
\begin{equation}
	\begin{aligned}
	 \fr{1}{4n}\int\sum_{i\neq j, 1\leq i,j\leq2n} \fr{k(\ue_i,\ue_j,x) \Jes_{j,i} \phi^2}{(\ue_i\ue_j)}dx \leq \fr{1}{4n}\int \sum_{i=1}^{n} \fr{1}{A}\fr{J^{\e 2}_i\phi^2}{(\ue_i\ue_{i+n})}dx   \\+ \int \sum_{i=1}^{n}4nAM^2(\partial_i\phi )^2 dx + \partial_t \int \sum_{i=1}^{2n} \log(u_{i}^\e )\phi^2dx.
	\end{aligned}
	\end{equation}

Therefore, we get
\begin{equation}
	\begin{aligned}
	 \fr{1}{8n}\int^T_0\int\sum_{i\neq j, 1\leq i,j\leq2n} \fr{k(\ue_i,\ue_j,x)  \Jes_{j,i} \phi^2}{(\ue_i\ue_j)}dx \leq C_0(\phi,M,A,T,n).
	\end{aligned}
	\end{equation}
	
Rest of this proof is immadiate if one knows $\fr{1}{A}\leq k(\ue_i,\ue_j,x)  \leq A$ on time interval [0,T].
\end{proof}
\end{prop}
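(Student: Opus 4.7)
My plan is to derive a local entropy identity using the logarithmic entropy $\sum_{i=1}^{2n} \log u_i^\e$ weighted by the cutoff $\phi^2$. Concretely, I would multiply the $i$-th equation of \eqref{eq-main} by $\phi^2/u_i^\e$, sum over $i=1,\dots,2n$, and integrate over $\R^n$. The time term becomes $\partial_t \int \sum_i \log u_i^\e \, \phi^2\, dx$; the collision term collapses into the entropy production
\begin{equation*}
\frac{1}{4n}\int \sum_{i\neq j} \frac{k(u_i^\e,u_j^\e,x)\, J^{\e 2}_{j,i}}{u_i^\e u_j^\e}\, \phi^2\, dx,
\end{equation*}
using the identity $(u_j-u_i)(1/u_i-1/u_j)=(u_j-u_i)^2/(u_iu_j)=\e^2 J^{\e 2}_{j,i}/(u_iu_j)$, which cancels the troublesome $1/\e^2$ prefactor and yields the natural quadratic form in the fluxes $J^\e_{i,j}$.

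The main obstacle is the transport contribution, which after an integration by parts produces a term of order $1/\e$, namely $\tfrac{2}{\e}\int (\log u_i^\e - \log u_{i+n}^\e)\,\phi\,\partial_i\phi\, dx$. The crucial trick is to exploit $\log(1+r)\le r$: writing $\log(u_{i+n}^\e/u_i^\e) = \log(1+\e J^\e_i/u_i^\e \cdot (-1))$ gives $-(\log u_i^\e-\log u_{i+n}^\e)\le \e J^\e_i/u_i^\e$, which exactly absorbs the $1/\e$. A Young's inequality then splits the resulting term into a small multiple of $J^{\e 2}_i \phi^2/(u_i^\e u_{i+n}^\e)$, to be absorbed into the LHS entropy production, plus a harmless remainder $(\partial_i\phi)^2 u_{i+n}^\e / u_i^\e$ bounded using the assumed two-sided bound $1/M \le u_i^\e \le M$.

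To finish, I would integrate in $t\in[0,T]$. The antiderivative of the time term evaluates to $\int \sum_i \log u_i^\e\,\phi^2\,dx$ at endpoints, bounded in absolute value by $(4n\log M)\int \phi^2\,dx$ because of the uniform bounds on $u_i^\e$. This gives an $\e$-independent estimate on $\int_0^T\!\int \sum_{i\neq j} \frac{k\, J^{\e 2}_{j,i}}{u_i^\e u_j^\e}\phi^2\,dx\,dt$. The admissibility of $k$ on the range $[1/M,M]$ supplies constants $1/A \le k \le A$ with $A=A(M)$; hence $k^2 J^{\e 2}_{i,j} = k \cdot k\,J^{\e 2}_{i,j} \le A^2 M^2 \cdot k J^{\e 2}_{i,j}/(u_i^\e u_j^\e)$, converting the entropy-production bound into the stated flux bound with $C=C(\phi,M,k,T,n)$.

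The delicate point is the balance in Young's inequality: the coefficient in front of the absorbed flux term must be strictly smaller than the one appearing in the entropy production, and this is possible precisely because $k,u_i^\e,u_{i+n}^\e$ are all pinched between positive constants uniformly in $\e$. Without this two-sided bound the argument breaks down, which is exactly why the much harder work in later sections is devoted to establishing such local bounds.
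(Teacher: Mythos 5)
Your proposal is correct and follows essentially the same route as the paper: the weighted logarithmic entropy $\sum_i \log u_i^\e\,\phi^2$, cancellation of the $1/\e^2$ prefactor by the quadratic collision identity, the $\log(1+r)\le r$ trick to absorb the $1/\e$ transport term, Young's inequality with absorption into the entropy production, and the two-sided bounds on $u_i^\e$ and $k$ to convert the result into the stated flux estimate.
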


From above uniform local estimate, we get following boundedness of $L^2$ norm of $J_{i,j}$ on $Q_{R,T}$.

\begin{lem}\label{lem-current estimate}
$\ue_i$,i=1,2..,2n be the solution of \eqref{eq-main} with $0<N \leq g_i\leq M$ (hence $N \leq \ue_i \leq M$ by lemma \ref{lem-Apriori}, \ref{lem-Apriori2}). Then for all $R>0$, $\exists C=C(R,M,N,k,T,n)$ independent of $\e$ s.t.
\begin{equation}
\iint_{Q_{R,T}}\sum_{i,j}\Jes_{i,j}dxdt\le C
\end{equation}
This allow us deduce that $\Je_{i,j}$ converges to some $J_{i,j}$ weakly in $L^2_{x,t}(Q_{R,T})$ along some subsequences for all $Q_{R,T}$.

\end{lem}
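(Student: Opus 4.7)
The plan is to obtain the claim as a direct consequence of Proposition \ref{prop-local}, exploiting the fact that under the present hypotheses the pointwise bounds $N \le \ue_i \le M$ hold \emph{globally} on $\R^n \times [0,T]$, not just on a compact subset. This global control is the only new ingredient over what is already proved in the previous proposition.

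First I would record the global pointwise bounds. The lower bound $\ue_i \ge N$ is exactly Lemma \ref{lem-Apriori2}. The upper bound $\ue_i \le M$ follows either from the expanding-support/convex-$\phi$ argument recorded right after Lemma \ref{lem-Apriori}, or more directly by applying Theorem \ref{thm-MP}(iii) with the constant supersolution $v_i \equiv M$. Admissibility of $k$ (Definition \ref{def-adm-interaction}(2)) then supplies a constant $A = A(M,N) > 0$, \emph{independent of $\e$}, such that $\fr{1}{A} \le k(\ue_i,\ue_j,x) \le A$ on all of $\R^n \times [0,T]$.

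Next I fix a cutoff $\phi \in C_0^\infty(\R^n)$ with $\phi \equiv 1$ on $B_R$ and $\supp \phi \subset K := B_{R+1}$. Because $N \le \ue_i \le M$ on $K \times [0,T]$ uniformly in $\e$, Proposition \ref{prop-local} applies and yields a constant $C_1 = C_1(\phi,M,N,k,T,n)$, independent of $\e$, with
\begin{equation*}
\int_0^T \int_{\R^n} \sum_{i,j} k(\ue_i,\ue_j,x)^2\, \Jes_{i,j}\, \phi^2\, dx\, dt \;\le\; C_1.
\end{equation*}
Since $k(\ue_i,\ue_j,x)^2 \ge A^{-2}$ and $\phi^2 \equiv 1$ on $B_R$, discarding the weight gives
\begin{equation*}
\iint_{Q_{R,T}} \sum_{i,j} \Jes_{i,j}\, dx\, dt \;\le\; A^2 \int_0^T\!\!\int_{\R^n} \sum_{i,j} k^2\, \Jes_{i,j}\, \phi^2\, dx\, dt \;\le\; A^2 C_1,
\end{equation*}
which is the announced $\e$-uniform bound.

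Weak compactness of bounded sequences in $L^2(Q_{R,T})$ then produces a subsequence along which $\Je_{i,j} \rightharpoonup J_{i,j}$ for each pair $(i,j)$; a standard diagonal argument over an exhaustion $R_k \nearrow \infty$ gives a single subsequence that works for every $R$ simultaneously. There is essentially no obstacle beyond bookkeeping here — the entropy/integration-by-parts heavy lifting is already done in Proposition \ref{prop-local}. The one point worth emphasizing is that the constant $A$ bounding $k$ above and below is independent of $\e$, which is precisely what globalizing the pointwise bounds $N \le \ue_i \le M$ buys us and is what upgrades the weighted local estimate into the clean unweighted $L^2$ bound on $J^\e_{i,j}$.
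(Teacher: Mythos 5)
Your argument is correct and is exactly the route the paper intends: the lemma is stated as an immediate consequence of Proposition \ref{prop-local}, using the global bounds $N\le \ue_i\le M$ together with the admissibility of $k$ to get the $\e$-independent two-sided bound on $k$, choosing a cutoff $\phi\equiv 1$ on $B_R$, dropping the $k^2\phi^2$ weight from below, and concluding by weak $L^2$ compactness with a diagonal extraction. Your write-up simply supplies the bookkeeping the paper leaves implicit.
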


\begin{remark}
From the same argument, $k(u_{i}^\e,u_{j}^\e,x)\Je_{i,j}$ are also $L^2$ bounded. Thus this has weakly convergent subsequence in $L^2_{loc}(Q)$.
\end{remark}

\begin{remark}
Since $\ue_i\le M$ uniformly in $\e$ and $t$, $\rhoe=\sum_i\ue_i $(subsequence) converge weakly $\rho$ for some $\rho$ in $L^2_{x,t}$ locally.
\end{remark}

\section{Strong Convergence of Diffusive Limit}\label{sec-dl}

We can improve the mode of convergence by a well-known compensated compactness theorem, div-curl lemma \cite{M}, stated below.

\begin{lem}\label{lem-div-curl}(Div-Curl Lemma)  For an open set $A$ of $\R^n$, let $w_\e$ and $ v_\e$ be two sequences such that
\begin{equation}
\begin{aligned}
w_\e\ \rightharpoonup\ w \text{ in $[L^2(A)]^n$-weak},\\
 v_\e\ \rightharpoonup\ v\text{ in $[L^2(A)]^n$-weak},
\end{aligned}
\end{equation}

\begin{equation}
\begin{aligned}
&div(w_\e)\text{ is bounded in }L^2(A) \text{ or compact in $H^{-1}(A)$, \ and}\\
&curl(v_\e)\text{ is bounded in }[L^2(A)]^{n^2} \text{ or compact in $[H^{-1}(A)]^{n^2}$}.
\end{aligned}
\end{equation}
Then we have
\begin{equation}
\langle w_\e,v_\e\rangle\ \longrightarrow \ \langle w,v \rangle
\end{equation}
in distribution sense, where  $\langle \cdot, \cdot \rangle$ is usual inner product.
\end{lem}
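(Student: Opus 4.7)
The plan is to prove this classical Murat--Tartar div-curl lemma via the Helmholtz decomposition combined with integration by parts. First I would localize: after multiplying both sequences by a fixed smooth cutoff it suffices to test against $\varphi\in C_c^\infty(A)$ and to assume $w_\e, v_\e$ are compactly supported in a common ball $B\subset\R^n$, on which we perform the decomposition
\begin{equation*}
w_\e = \D\alpha_\e + W_\e,\qquad v_\e = \D\beta_\e + V_\e,
\end{equation*}
where $\alpha_\e, \beta_\e$ are the Newtonian potentials of $\mathrm{div}\,w_\e, \mathrm{div}\,v_\e$ and $W_\e, V_\e$ are divergence-free. Each of the four components is bounded in $L^2$ by Calder\'on--Zygmund, and the weak limits decompose accordingly.

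The two strong-convergence inputs come directly from the hypotheses. Since $\mathrm{div}\,w_\e$ is compact in $H^{-1}$ and $\alpha_\e=(-\Delta)^{-1}\mathrm{div}\,w_\e$ effectively gains two derivatives, $\D\alpha_\e\to\D\alpha$ strongly in $L^2_{\mathrm{loc}}$; symmetrically, $V_\e$ is divergence-free with $\mathrm{curl}\,V_\e=\mathrm{curl}\,v_\e$ compact in $H^{-1}$, so the Biot--Savart-type representation $V_\e=(-\Delta)^{-1}\mathrm{curl}^{*}\mathrm{curl}\,V_\e$ expresses $V_\e$ as a zero-order singular integral operator applied to a strongly convergent sequence, giving $V_\e\to V$ strongly in $L^2_{\mathrm{loc}}$. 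Expanding $\int\varphi\langle w_\e,v_\e\rangle\,dx$ into the four cross-terms $\D\alpha_\e\cdot\D\beta_\e$, $\D\alpha_\e\cdot V_\e$, $W_\e\cdot\D\beta_\e$, and $W_\e\cdot V_\e$, three of these pair a strongly convergent factor with a weakly convergent one and pass to the limit immediately.

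Only the remaining cross-term $\int\varphi\,W_\e\cdot\D\beta_\e\,dx$ pairs two sequences that are only weakly convergent. I would integrate by parts to move $\D$ off $\beta_\e$; using $\mathrm{div}\,W_\e=0$ the bulk contribution vanishes and the integral reduces to $-\int\beta_\e\,\D\varphi\cdot W_\e\,dx$, at which point Rellich delivers $\beta_\e\to\beta$ strongly in $L^2_{\mathrm{loc}}$ (since $\beta_\e$ is bounded in $H^1_{\mathrm{loc}}$), restoring a strong--weak pairing. The hard part will be the bookkeeping for the Helmholtz decomposition on $\R^n$, particularly when $n=2$, where the Newtonian potential is not globally well-defined; this is circumvented by choosing the cutoff to localize the decomposition to a fixed ball and verifying that the zero-order pseudodifferential operators arising from $(-\Delta)^{-1}$ preserve compactness modulo commutator errors that vanish in the $L^2_{\mathrm{loc}}$ sense.
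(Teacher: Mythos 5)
Your proposal addresses a statement for which the paper gives no proof at all: Lemma \ref{lem-div-curl} is quoted as a known compensated-compactness result with a citation to Murat \cite{M}, and the paper's real work lies in verifying its hypotheses for $(\Je,\rhoe)$ and $(0,\rhoe)$, not in proving the lemma. Your Helmholtz/Leray-projection argument is a correct and standard self-contained route, different in flavor from the classical Murat--Tartar proof, which localizes, takes Fourier transforms, and splits frequency space according to where the symbol of the div/curl constraints degenerates; your version trades that frequency analysis for elliptic representation formulas plus integration by parts, which is arguably more transparent but requires exactly the localization bookkeeping you flag. Three small points to tighten: (i) the hypothesis in the lemma is ``bounded in $L^2(A)$ \emph{or} compact in $H^{-1}(A)$,'' so you should say explicitly that after cutting off, $L^2$-boundedness yields $H^{-1}$-precompactness of $\operatorname{div}(\chi w_\e)$ and $\operatorname{curl}(\chi v_\e)$ by Rellich, reducing to the case you treat; (ii) the operator you apply to $\operatorname{curl}v_\e$, namely $(-\Delta)^{-1}\operatorname{curl}^*$, gains one derivative and maps $H^{-1}$ to $L^2$ --- the ``zero-order singular integral'' description is accurate only for the full projector acting on $v_\e$ itself, so phrase the strong $L^2_{loc}$ convergence of $V_\e$ accordingly (and note the low-frequency part of the multiplier needs the compact support you arranged, which also disposes of the $n=2$ Newtonian-potential issue if you work with Fourier multipliers on $\R^n$ rather than potentials); (iii) you only need to decompose one of the two sequences: with $v_\e=\D\beta_\e+V_\e$ alone, the term $w_\e\cdot V_\e$ is weak--strong and the term $w_\e\cdot\D\beta_\e$ is handled by your integration-by-parts/Rellich step using the compactness of $\operatorname{div}w_\e$, which removes the fourth cross-term and most of the bookkeeping. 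None of these is a gap in substance; as written the sketch is sound.
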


In order to use the  div-curl lemma, let us choose $A=Q_{R,T}$ in $\R^{n+1}$, $w^\e=(\Je,\rhoe)$, and $v^\e=(0,\rhoe)$ in $[L^2_{x,t}(Q_{R,T})]^{n+1}$. Then we get
\begin{equation}
\begin{aligned}
div_{x,t}w^\e=div_{x,t}(\Je,\rhoe)=div_x\Je+\partial_t\rhoe =0
\end{aligned}
\end{equation}
by the first equation of \eqref{eq-main3}.  From the definition $(curl\ F )_{ij}=\fr{\partial F_i}{\partial x_j} -\fr{\partial F_j}{\partial x_i}$, we have
\begin{equation}
curl\ v^\e =
\left( \begin{array}{cccc}
0 		& \cdots & 0 & -\partial_{x_{1}}\rhoe \\
\vdots  & \ddots&\vdots & \vdots\\
0 &\cdots&	 0	& -\partial_{x_{n} }\rhoe \\
\partial_{x_{1}}\rhoe& \cdots & \partial_{x_{n}}\rhoe &0
\end{array} \right).
\end{equation}

It remains to check whether the sequences of all entries in this matrix are compact in $H^{-1}(A)$ respectively. We employ the second equation of \eqref{eq-main3} and Lemma \ref{lem-current estimate}. First, we check
\begin{equation}
\begin{aligned}
\| \e^2\Je_i \|_{L^2(S)},\,\,\ \| \rhoe_j-\rhoe_i\|_{L^2(S)}= \| \e \Je_{j,i}\|_{L^2(S)} \to 0\text{ as } \e \to 0.\\
\text{Therefore, }\| \e^2 \partial_t  \Je_i \|_{H^{-1}(S)} ,\,\, \    \|\D_i ( \rhoe_j-\rhoe_i) \|_{H^{-1}(S)} \to \text{ 0 as }\e \to 0.\end{aligned}
\end{equation}
Moreover,  the $L^{\infty}$-bound of $\ue$ implies  the $L^{\infty}$-bound  of $k$ terms in the right hand side of the second equation \eqref{eq-main3}. Then, from  $L^2(A)$-bound of $\Je_{i,j}$, Lemma \ref{lem-current estimate}, it follows  the $k J^\e_{i,j}$ terms in the second equation of \eqref{eq-main3} are  bounded  in $L^2(A)$. Finally, we can notice $\partial_i\rho^\e$ can be expressed by other terms in the second equation of \eqref{eq-main3}  which are either $H^{-1}(A)$ convergent or $L^2(A)$ bounded. Since $L^2(A)$ is compactly embedded in $H^{-1}(A)$, this leads to compactness of $\partial_i\rho^\e$ in $H^{-1}(A)$. We now apply the div-curl lemma to obtain following proposition.

\begin{prop}\label{prop-div-curl app}
Let $\ue_i$, $i=1,\ldots,2n$, be the unique weak solution of \eqref{eq-main} with bounded initial data $g_i$ such that  $0<N \leq g_i\leq M$. Then, on each $A=Q_{R,T}$, we have $\langle v^\e,w^\e\rangle=(\rho^{\e})^{2} \to \rho^2$ in distribution sense along a subsequence. In fact, $\rho^2 \in L^2_{loc}(Q)$ and $(\rhoe)^2 \rightharpoonup \rho^2$ weakly in $L^2_{loc}(Q)$ .
\end{prop}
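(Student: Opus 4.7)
The plan is to apply the div--curl Lemma \ref{lem-div-curl} on $A = Q_{R,T}\subset \R^{n+1}$ with the specific choices $w^\e = (J^\e, \rho^\e)$ and $v^\e = (0,\rho^\e)$ already suggested in the paragraphs preceding the statement, so that $\langle w^\e, v^\e\rangle = (\rho^\e)^2$. Three ingredients must be verified: weak $L^2(A)^{n+1}$-convergence of $w^\e$ and $v^\e$ along a common subsequence, $H^{-1}(A)$-compactness (or $L^2(A)$-boundedness) of $\mathrm{div}_{x,t}w^\e$, and the same for $\mathrm{curl}_{x,t}v^\e$. The first is immediate: the uniform $L^\infty$-bound $\rho^\e \le 2nM$ from Lemmas \ref{lem-Apriori}--\ref{lem-Apriori2} combined with the local flux estimate in Lemma \ref{lem-current estimate} gives $L^2(A)$-bounds on every component, so a diagonal extraction produces the desired weakly convergent subsequence with some limit $(J,\rho)$ for $w^\e$ and $(0,\rho)$ for $v^\e$. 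The second is trivial: the continuity identity $\partial_t\rho^\e + \mathrm{div}_x J^\e = 0$, which is the first line of \eqref{eq-main3}, says $\mathrm{div}_{x,t}w^\e \equiv 0$.

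The main obstacle is the third ingredient, compactness of $\mathrm{curl}_{x,t}v^\e$ in $H^{-1}(A)$. Since every nontrivial entry of that matrix equals $\pm\partial_{x_i}\rho^\e$, I plan to solve the second line of \eqref{eq-main3} for $\tfrac{1}{n}\partial_i\rho^\e$ and split the right-hand side into two groups. Group (a) consists of $\e^2\partial_t J^\e_i$ and $\partial_i(\rho^\e_j-\rho^\e_i) = \e\,\partial_i J^\e_{j,i}$; using the $L^2(A)$-bound on $J^\e_i$ and $J^\e_{j,i}$ from Lemma \ref{lem-current estimate}, the prefactors $\e^2$ and $\e$ make these terms converge to zero in $H^{-1}(A)$. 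Group (b) consists of the reaction expressions $k(\ue_i,\ue_j,x)J^\e_{i,j}$ and their analogues; the $L^\infty$-bound on $k$ together with the remark following Lemma \ref{lem-current estimate} gives uniform $L^2(A)$-bounds, and these are compact in $H^{-1}(A)$ by the compact embedding $L^2(A)\hookrightarrow H^{-1}(A)$ on the bounded set $A$. Summing the two groups gives $\partial_i\rho^\e$ compact in $H^{-1}(A)$ for each $i$, which is exactly what the lemma requires.

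With the hypotheses verified, Lemma \ref{lem-div-curl} yields $(\rho^\e)^2 = \langle w^\e,v^\e\rangle \to \langle w,v\rangle = \rho^2$ in $\mathcal{D}'(A)$. To upgrade to weak $L^2_{loc}(Q)$ convergence, I will note that $(\rho^\e)^2 \le (2nM)^2$ uniformly, so along a further subsequence $(\rho^\e)^2 \rightharpoonup \zeta$ in $L^2_{loc}(Q)$ for some $\zeta$; the distributional identification just obtained forces $\zeta = \rho^2$ a.e. on each $Q_{R,T}$, and the same bound gives $\rho^2 \in L^\infty_{loc}(Q) \subset L^2_{loc}(Q)$. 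A standard subsequence-of-subsequence argument then shows the full (relabeled) sequence converges.
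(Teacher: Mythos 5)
Your proposal is correct and follows essentially the same route as the paper: the identical choices $w^\e=(J^\e,\rho^\e)$, $v^\e=(0,\rho^\e)$, the continuity equation for $\mathrm{div}_{x,t}w^\e=0$, the splitting of the second equation of \eqref{eq-main3} into $\e$-weighted terms vanishing in $H^{-1}(A)$ and $L^2(A)$-bounded reaction terms compact by the embedding $L^2(A)\hookrightarrow H^{-1}(A)$, and the final upgrade from distributional to weak $L^2_{loc}$ convergence via the uniform $L^\infty$ bound and uniqueness of the distributional limit. No gaps.
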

The weak convergence in the Proposition above comes from $L^\infty$-bound of $\rhoe$ and the uniqueness of its distribution limit.
\begin{corollary}\label{cor-div-curl}

Under the same condition of Proposition \ref{prop-div-curl app}, $$\rhoe \to \rho\text{ and }\ \ue_i \to \fr{\rho}{2n}\text{ in }L^2_{loc}(Q).$$ Furthermore,  $$ k(u_{k}^\e,u_{l}^\e,x) \Je_{i,j} \to k\left(\fr{\rho}{2n},\fr{\rho}{2n},x\right)J_{i,j}\text{ in }L^1_{loc}(Q)\text{ for all }i,j,k,l=1\ldots2n.$$
\begin{proof}
The strong convergences of $\rho^{\e}$ and $u_i^{\e}$ come from Proposition \ref{prop-div-curl app} above and  Lemma 7 of \cite{SV} which can be stated as follows:
 $$\text{If }|A|<\infty,\ \rho^\e \rightharpoonup \rho\text{ and }  (\rho^\e)^2 \rightharpoonup \rho^2\text{ in }L^2(A)\text{, then }\rho^\e \rightarrow \rho\text{ in }L^2(A).$$
Next, the results above and Lemma \ref{lem-current estimate} imply $\ue_i = \fr{1}{2n}(\rhoe + \sum_{k=1}^{2n} \e\Je_{i,k})\to \fr{\rho}{2n}$ in $L^2_{loc}(Q)$. Moreover, by taking a subsequence, we may assume $u^\e_i\to\rho$ almost everywhere. Hence, from the continuity of $k$, we can apply dominated convergence theorem to obtain $$k(u^\e_i,u^\e_j,x)\to k\left(\fr{\rho}{2n},\fr{\rho}{2n},x\right)\text{ in }L^2_{loc}(Q).$$ $L^1_{loc}$ convergence of  $k(u_{k}^\e,u_{l}^\e,x) \Je_{i,j}$ to $k\left(\fr{\rho}{2n},\fr{\rho}{2n},x\right)J_{i,j}$ is obvious from the following standard lemma:
$$\text{If }1<p<\infty\text{, }u_n\to u\text{ in }L^p\text{ and }v_n\rightharpoonup v\text{ in }L^{p'}\text{, then }u_nv_n\to uv\text{ in }L^1$$\end{proof}
\end{corollary}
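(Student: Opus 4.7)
The plan breaks into three sequential steps: (i) upgrade the combination of $\rho^\e \rightharpoonup \rho$ and $(\rho^\e)^2 \rightharpoonup \rho^2$ supplied by Proposition \ref{prop-div-curl app} into strong $L^2_{loc}$ convergence $\rho^\e \to \rho$; (ii) deduce $u^\e_i \to \rho/(2n)$ strongly in $L^2_{loc}(Q)$ by writing $u^\e_i$ in terms of $\rho^\e$ and $\e J^\e_{i,k}$; (iii) combine the resulting a.e.\ convergence of the $k$-factor with the weak $L^2$ convergence of $J^\e_{i,j}$ to obtain $L^1_{loc}$ convergence of the product.

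For step (i), on any bounded cylinder $A = Q_{R,T}$ I would expand
\[
\int_A (\rho^\e - \rho)^2\, dx\, dt = \int_A (\rho^\e)^2\, dx\, dt - 2\int_A \rho^\e\, \rho\, dx\, dt + \int_A \rho^2\, dx\, dt.
\]
Since $|A|<\infty$, I may test $(\rho^\e)^2 \rightharpoonup \rho^2$ against $\mathbf{1}_A \in L^2(A)$ and $\rho^\e \rightharpoonup \rho$ against $\rho \in L^2(A)$; all three terms on the right then converge to $\|\rho\|_{L^2(A)}^2$, so the left side tends to $0$. This is precisely the content of Lemma 7 of \cite{SV}.

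For step (ii), the identity $\e J^\e_{i,k} = u^\e_i - u^\e_k$ summed over $k=1,\ldots,2n$ yields
\[
u^\e_i \;=\; \frac{1}{2n}\Bigl(\rho^\e + \sum_{k=1}^{2n} \e J^\e_{i,k}\Bigr).
\]
By Lemma \ref{lem-current estimate}, $\|J^\e_{i,k}\|_{L^2(A)}$ is bounded uniformly in $\e$, hence $\e J^\e_{i,k} \to 0$ in $L^2(A)$; combined with step (i), this gives the claimed $L^2_{loc}$ convergence of $u^\e_i$.

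For step (iii), I would pass to a further subsequence so that $u^\e_k \to \rho/(2n)$ pointwise a.e.\ for every $k$. Then the continuity of $k(\cdot,\cdot,x)$ from Definition \ref{def-adm-interaction} together with the uniform bounds $N \le u^\e_i \le M$ lets me invoke dominated convergence to upgrade this to $k(u^\e_k, u^\e_l, x) \to k(\rho/(2n), \rho/(2n), x)$ in $L^2_{loc}(Q)$. Since Lemma \ref{lem-current estimate} also gives $J^\e_{i,j} \rightharpoonup J_{i,j}$ weakly in $L^2_{loc}$, the standard strong-times-weak principle for $L^2\cdot L^2 \to L^1$ delivers the asserted $L^1_{loc}$ convergence of the product. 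The only genuinely nontrivial step is (i); once strong $L^2$ convergence of $\rho^\e$ is in hand, steps (ii) and (iii) reduce to bookkeeping with uniform bounds already established, modulo care in extracting a single subsequence along which all the weak limits and a.e.\ convergences hold simultaneously.
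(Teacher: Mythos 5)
Your proposal is correct and follows essentially the same route as the paper: weak convergence of $\rho^\e$ and $(\rho^\e)^2$ upgraded to strong $L^2_{loc}$ convergence (you simply spell out the quadratic-expansion proof of Lemma 7 of \cite{SV} instead of citing it), then the identity $u^\e_i = \fr{1}{2n}(\rho^\e + \sum_k \e J^\e_{i,k})$ with the uniform $L^2$ flux bound, and finally a.e.\ convergence plus dominated convergence for the $k$-factor combined with the strong-times-weak product lemma. No gaps.
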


Now we have  shown that  $\ue \to  \fr{\rho}{2n}$  in $L^2_{loc}$ and $\Je \rightharpoonup J$ in $[L^2_{loc}]^n$ along a subsequence. With theses results, we can  identify equations  satisfied by $\rho$, and $J$.
Since $\rhoe$ and $\Je$ satisfy \eqref{eq-main3} in distribution sense, we have \begin{equation}\label{eq-dl-1}\partial_t \rho + div J = 0\end{equation} in distribution as well by taking a limit. Similarily, in the second equation of \eqref{eq-main3},  uniform $L^2$-bound of $J^\e$,  $L^2$ convergence of $u^\e$ and $L^1$ convergence of  $k(u_{k}^\e,u_{l}^\e,x) \Je_{i,j}$ imply following equation in distribution sense:

\begin{equation}
0 + \D_i\rho =\fr{1}{n} \D_i \sum_{j=1}^{n} 0 +\fr{1}{2n} \sum_{j=1}^{2n}\left[0(\fr{J_{j,i}+J_{j,i+n}}{2})-\left( k\left(\fr{\rho}{2n},\fr{\rho}{2n},x\right)+k\left(\fr{\rho}{2n},\fr{\rho}{2n},x\right) \right) \fr{J_i}{2}\right].
\end{equation}
{\it i.e.}
\begin{equation}\label{eq-dl-2}
 \fr{1}{n}\D_i\rho = -k\left(\fr{\rho}{2n},\fr{\rho}{2n},x\right)J_i \quad \text{ in distribution sense.}
\end{equation}

This implies $\D\rho\in L^2_{loc}(Q)$ and $J=-\fr{1}{n k\left(\fr{\rho}{2n},\fr{\rho}{2n},x\right)} \D\rho\in [L^2_{loc}(Q)]^n$ because k is bounded above and below with positive constants.
 Finally, combining \eqref{eq-dl-2} with \eqref{eq-dl-1}, we get \begin{equation}\label{eq-dl-3}
 \partial_t\rho-\D\cdot\left(\fr{1}{n k\left(\fr{\rho}{2n},\fr{\rho}{2n},x\right)}\D \rho\right) =0 \text{\quad in weak sense.}
 \end{equation}

In the next step, we identity the initial trace of this diffusion equation. Of course, this will be derived by employing the fact that $\{\ue_i\}$ solves \eqref{eq-main} with initial data $\{g_i\}$.
From \eqref{eq-main3} and the definition of weak solution, we get
\begin{equation}
\int_0^\infty\int_{\R^n} \rhoe\partial_t\phi+\langle\Je,\D \phi\rangle dxdt + \int_{\R^n}\sum_{i=1}^{2n} g_i \ \phi dx =0,
\end{equation}
for all $ \phi \in C^\infty(Q)\text{ , which vanishes on }|x|>R\text{ and }t>T \ \text{ for some } R\text{ and } T>0.$
Using Proposition \ref{prop-div-curl app} and Corollary \ref{cor-div-curl}, we can pass to the limit and get the following equation:
\begin{equation}
\int_0^\infty\int_{\R^n} \rho\partial_t\phi-\fr{1}{n k(\fr{\rho}{2n},\fr{\rho}{2n},x)}\langle\D\rho,\D \phi\rangle dxdt + \int_{\R^n}\sum_{i=1}^{2n} g_i \ \phi dx =0  .
\end{equation}

Therefore, $\rho$ solves cauchy problem of following diffusion equation
$$\rho_t=\D\cdot\left( \fr{\D \rho}{n k\left(\fr{\rho}{2n},\fr{\rho}{2n},x\right)}\right) \text{weakly with initial data } \rho(0)=\sum_{i=1}^{2n} g_i.$$

 Regarding the regularity of $\rho$, since the k term has positive upper and lower bounds, this equation is uniformly parabolic. Hence,  $\rho$ is unique for a  given initial data. Moreover, by standard regularity theory and bootstrap argument \cite{L}, it is $C^{\infty}$ on $t>0$ if $k(a,a,x)$ is smooth function of s and x.  The uniqueness shows that the convergence actually takes in any arbitrary sequences. Finally, if $k$ is T-dissipative and initial data is of $L^1$ perturbation of a $C^1$ function having integrable derivatives, we have $C([0,T], L^1_{loc}(\R^n))$, improved mode of convergence.
 
\begin{thm} \label{thm-dl-1}
Suppose initial data $\{g_i\}$ has uniform lower and upper bounds $0<N\le g_i \le M$ for $i=1,\ldots,2n$, then  $\ue_i$, the unique weak solution of \eqref{eq-main}, converge to the same limit $\fr{\rho}{2n}$ in $L^2_{loc}(Q)$ as $\e\to 0$. Here $\rho$ is the unique weak solution of the Cauchy problem \eqref{eq-dl-3} with initial data $\sum_{i=1}^{n} g_i$. 

Moreover, if k has no space dependency, {\it i.e.} $k(a,b,x)=k(a,b)$, and T-dissipative in the sense of Definition \ref{def-dissipativity} and initial data $g_i$ can be written as \begin{equation}\label{eq-compactcondition2}g_i=l_i + h_i,\text{ }h_i\in L^1(\R^n),\text{ and }\ l_i \in C^1(\R^n) \text{ with }\int |Dl_i| < \infty  ,\end{equation} then $\rho^\e \to \rho$ in $C([0,T],L^1_{loc}(\R^n))$.

\begin{proof}
The first part of the theorem above comes from Corollary \ref{cor-div-curl} and the comment after that. It suffices to prove $\rho^\e$ are relatively compact in $C([0,T],L^1_{loc}(\R^n))$.

First, let us assume $g_i \in C^1(\R^n)$ with $|Dg_i|\in L^1(\R^n)$. $L^1$-contraction implies $\sum_i||u_i(x+h)-u_i(x)||_{L^1}\le \sum_i||g_i(x+h)-g_i(x)||_{L^1}$ for all $h\in \R^n$. Thus, $||D\rho^\e||_{L^1(\R^n)}\le\sum_i ||Du_i||_{L^1(\R^n)}\le \sum_i ||Dg_i||$. Hence, $\rho^\e$ are bounded in $L^\infty([0,T], W^{1,1}_{loc})$. Moreover, first equation of eq \eqref{eq-main3} and uniform $L^2_{loc}$ bound of $J^\e$ tell us that $\p_t\rho^\e$ are uniformly bounded in $L^2([0,T], H^{-1}_{loc}(\R^n))$, and hence $\rho^\e(t)\in H^{-1}_{loc}(\R^n)$ are uniformly H\"{o}lder continuous in time. Now, using these time and space regularity, we are able to apply Theorem 5 in \cite{S} and conclude that $\rho^\e$ are relatively compact in $C([0,T], L^1_{loc}(\R^n))$.

  Now if $g_i=l_i+h_i$ as in the statement, let $h^\delta_i$ be usual regularization of $h_i$. {\it i.e.} $h_{i,\delta}(x)= \int \eta_\delta(x-y)h_i(y)dy$. It could be checked that $h_{i,\delta} \to h_i$ in $L^1$ and $|Dh_{i,\delta}|$ are integrable on $\R^n$. If we approximate $g_i$ by $l_i+h_{i,\delta}$, $L^1$-contraction and first result implie $\rho^e$ are also relatively compact in $C([0,T],L^1_{loc}(\R^n))$.
\end{proof}
\end{thm}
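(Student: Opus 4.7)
The first statement is essentially already a consequence of what precedes the theorem in Section \ref{sec-dl}: Corollary \ref{cor-div-curl} gives $L^2_{loc}$ convergence of $u_i^\e$ and $\rho^\e$ along a subsequence, equations \eqref{eq-dl-1}--\eqref{eq-dl-3} identify the limit as a weak solution of the uniformly parabolic Cauchy problem, and uniformly parabolic theory yields uniqueness, which upgrades subsequential convergence to full convergence. So my job is really to prove the second part, the upgrade of convergence from $L^2_{loc}(Q)$ to $C([0,T], L^1_{loc}(\R^n))$ under the decomposition \eqref{eq-compactcondition2}.

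The plan is to apply an Aubin--Simon type compactness argument (Theorem 5 of \cite{S}) to the family $\{\rho^\e\}$. For this I need uniform spatial regularity and uniform temporal equicontinuity in a weaker space. I would first handle the clean case $g_i \in C^1(\R^n)$ with $|D g_i| \in L^1(\R^n)$. For the spatial regularity, I differentiate: since $k$ depends only on $(u,v)$ (no $x$) and is T-dissipative, the $L^1$-contraction from Lemma \ref{lem-contraction} is translation invariant, so for every $h \in \R^n$,
\begin{equation*}
\sum_i \|u_i^\e(\cdot + h, t) - u_i^\e(\cdot, t)\|_{L^1(\R^n)} \le \sum_i \|g_i(\cdot + h) - g_i(\cdot)\|_{L^1(\R^n)} \le |h|\sum_i \|D g_i\|_{L^1(\R^n)}.
\end{equation*}
Dividing by $|h|$ and sending $h \to 0$ gives $\|D\rho^\e(\cdot,t)\|_{L^1(\R^n)} \le \sum_i \|D g_i\|_{L^1}$, uniformly in $\e$ and $t$. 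Thus $\rho^\e$ is bounded in $L^\infty([0,T], W^{1,1}_{loc}(\R^n))$, and since we also have the uniform $L^\infty$ bound $\rho^\e \le 2nM$, the family sits in $L^\infty([0,T], W^{1,1}(B_R))$ for every $R$, whose embedding into $L^1(B_R)$ is compact.

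For the time regularity, I use the conservation form that is the first line of \eqref{eq-main3}, namely $\partial_t \rho^\e = -\mathrm{div}\, J^\e$. By Lemma \ref{lem-current estimate} the fluxes $J^\e$ are uniformly bounded in $L^2(Q_{R,T})$, hence $\partial_t \rho^\e$ is uniformly bounded in $L^2([0,T], H^{-1}(B_R))$, which in particular gives uniform H\"older-$\tfrac{1}{2}$ continuity of $t \mapsto \rho^\e(\cdot, t)$ as a map into $H^{-1}_{loc}(\R^n)$. Combining the compact embedding $W^{1,1}(B_R) \hookrightarrow\!\hookrightarrow L^1(B_R) \hookrightarrow H^{-1}(B_R)$ with Simon's compactness theorem yields that $\{\rho^\e\}$ is relatively compact in $C([0,T], L^1(B_R))$ for every $R$. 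Since the limit is determined by the first part of the theorem, convergence holds along the full family $\e \to 0$.

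Finally, to remove the $C^1$ hypothesis and accommodate the decomposition $g_i = l_i + h_i$, I would regularize $h_i$ by convolution with a standard mollifier $\eta_\delta$, so that $l_i + h_{i,\delta} \to l_i + h_i$ in $L^1(\R^n)$ with $|D(l_i + h_{i,\delta})| \in L^1(\R^n)$. The $L^1$-contraction of Lemma \ref{lem-contraction} applied globally controls the difference between the $u_i^\e$ launched from $g_i$ and from $l_i + h_{i,\delta}$ uniformly in $\e$ and $t$ by the $L^1$-norm of the difference of initial data. A standard three-$\eps$ argument using the compactness from the smooth case then upgrades $\{\rho^\e\}$ to a relatively compact family in $C([0,T], L^1_{loc}(\R^n))$, and uniqueness of the limit gives the stated convergence. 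The main subtlety, and the place I would be most careful, is the derivation of the uniform $\|D\rho^\e(\cdot,t)\|_{L^1}$ bound from the translation form of $L^1$-contraction, because this is where spatial homogeneity of $k$ is genuinely used: if $k$ depended on $x$, translating a solution would not give another solution and the translation-invariant contraction would fail.
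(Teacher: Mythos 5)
Your proposal is correct and follows essentially the same route as the paper: the translation-invariant $L^1$-contraction gives the uniform $W^{1,1}$ bound on $\rho^\e$, the conservation law $\partial_t\rho^\e=-\mathrm{div}\,J^\e$ together with the flux estimate of Lemma \ref{lem-current estimate} gives the $L^2([0,T],H^{-1}_{loc})$ time regularity, Simon's compactness theorem yields relative compactness in $C([0,T],L^1_{loc}(\R^n))$, and the mollification of $h_i$ plus $L^1$-contraction handles the general decomposition \eqref{eq-compactcondition2}. Your explicit remarks on where spatial homogeneity of $k$ is used and on the chain of embeddings merely fill in details the paper leaves implicit.
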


\begin{remark}
We have assumed upper and lower bound $0<N\le g_i <M $ in the theorem, but if $k$ has positive lower bounds, we only need an upper bound to obtain the same conclusion. This needs a small change in the proof and see \cite{SV} for the details in 1-D case.
\end{remark}

\section{Local Lower Bound with General Initial Data}\label{sec-general}

From now on, we will consider the case  $k(u,v,x)=k_\alpha(u,v)=(u+v)^\alpha, $ for $\ -1\le \alpha \le 1$. In this case, our target diffusion equation will be Porous Medium Equation (PME) or Fast Diffusion Equation (FDE)   $\partial_t\rho-\fr{1}{ (1-\alpha)n^{1-\alpha}}\Delta \rho^{1-\alpha} =0$, which  has been studied extensively to model nonlinear diffusion in  applications to physics, engineering and geometry. Degenerate or singular  diffusion take places only if the value of the  solution touches zero or becomes infinity. Otherwise, it is uniformly parabolic and the behavior of a solution is very similar to that of linear heat equation. Hence, it is natural to ask whether we could obtain such kind of solution from our kinetic model through diffusive limit, {\it i.e.} diffusive limit of \eqref{eq-main} from unbounded or degenerate initial data is one of our main interest. We want to understand the minimal condition for the convergence of diffusive limit and compare it with the condition for the existence theory of target diffusion equations.

Our analysis on diffusive limit of \eqref{eq-main} will be divided into several cases according to different ranges of $\alpha$. It is due to the fact that the key characteristic of diffusion changes with $\alpha$.  We would like to  recall some known facts.  In PME range $-1\le \alpha<0$ ,  every $L^1$ initial data generates the unique mass preserving global solution. Moreover, it has finite speed of propagation property and thus creates free boundary if the initial data is compactly supported. Meanwhile, FDE range $0<\alpha\le1$, has more delicate phenomenon. In fact, this range could be again divided into supercritical and subcritical ranges. In supercritical range $0<\alpha<\fr{2}{n}$, theory is quite similar to PME compared to that of subcritical equation. However, this equation  is singular when $\rho=0$ and there is lack of some smoothing estimates. Finally, in critical and subcritical range $\fr{2}{n}\le\alpha\le1$, no solution can be driven from dirac mass since diffusion is too slow at that point. Also, there is no global existence on a very large class of initial data. Rather than its global existence, it shows finite time extinction property. For instance, every solution with $L^1$ initial data vanishes in finite time. Finally, in the case  $\alpha=1$, so called logarithmic fast diffusion equation, a solution is no longer unique. The existence, uniqueness, regularity, decaying, extinction, and asymptotic behavior of PME and FDE have been studied for decades and more details can be found in \cite{DK}, \cite{V}, and \cite{V2} . More recent results on asymptotic behaviors and their convergence rates can be found in \cite{BBDGV}, \cite{BDGV}, \cite{DKS}, \cite{DS}, \cite{DS2}, \cite{H}, \cite{H2}, \cite{HK}, and \cite{HK2}.\\

Resolution of the problem with general $L^1$ initial data in one space dimension is presented in Section 8, \cite{SV}. They approximate initial data of original problem with $\delta$-lifted bounded initial data, which has positive upper and lower bounds and whose diffusive limit has been proved at Section \ref{sec-dl}. Such truncation does not make serious difficulties because of the $L^1$-contraction property of the solution, where we can always approximate the original data in $L^1$ sense. However, $\delta$-lifting creates nontrivial issue due to the fact that the difference with the original data is no longer in $L^1$.

 To obtain the convergence of diffusive limit with original data, they needed to show difference between  the original solution and $\delta$-lifted solution of \eqref{eq-main} is locally arbitrary small as $\delta \to 0$ with $\e$-independent $L^1$-norm . It was crucial to prove the following: If \begin{equation}\label{eq-general-1}\int_{\R^n}\sum g_i(1+|x|^2)^\fr{q}{2}dx < C <\infty\end{equation} at $t=0$, then  \begin{equation}\label{eq-general-2} \int_{\R^n} \sum u_i^\e(1+|x|^2)^\fr{q}{2}dx, \int_{\R^n} \sum( u^\e_{\delta,i}-\delta)(1+|x|^2)^\fr{q}{2}dx < \overline{ M}\end{equation} for all $t\in [0,T]$ for some $\overline{M}=\overline{M}(C,n,\alpha,T)$ where $ u_{\delta,i}^\e$ is the solution of \eqref{eq-main} with initial data $g_i+\delta$. \eqref{eq-general-2}  tells us mass concentration over the time. On the other hand,  since mass $\int \rho^\e$ and $\int( \rho^\e_\delta - 2n\delta)$ are equal and preserved, $\rho^\e$ and $\rho^\e_\delta - 2n\delta$ will have almost equal mass in very large domains with a small error independent of $\e$ and $\delta$. This estimate says mass does not escape to infinity as $\e\to 0$ thus this is a kind local version of mass conservation. Since $\rho^\e_\delta$, solution with $\delta$-lifted initial data, is located above $\rho^\e$, we can control local $L^1$ difference between these two solution uniformly in $\e$ by simply taking larger domain. With this local estimate, \cite{SV} proves diffusive limit for $n=1$ and $|\alpha|\le1$ cases.

 However, if the target diffusion equation has no mass conservation, it is unnatural to expect similar estimates. The fact is, in one dimension and $|\alpha|\le 1$ range, it does not fall in subcritical exponent range. Thus, there always exist mass conserved solutions. In higher dimensional case, it is not hard to check the method of \cite{SV} can be used again in supercritical exponent $-1\le\alpha<\fr{2}{n}$, where mass conservation holds. To be speciic, one can reproduce Appendix II of \cite{SV} in this higher dimension case if the exponent is supercritical. On the other hand, our method in this paper for subcritical range can be also applied even to the supercritical case, which was proved by using \eqref{eq-general-2} for one dimension \cite{SV}.\\

Let us explain our steps in higher dimension subcritical exponent case. We recall that local uniform upper and lower bounds of $u^\e_i$s give us the  local flux estimates, div-curl lemma, and convergence theorem in Section \ref{sec-dl}. In this section, we consider more general initial data with a  decay condition at infinity, $g_i \sim \fr{1}{ |x|^\fr{2}{\alpha}}$. We prove the tail behavior of $u^\e$ will be preserved in time uniformly in $\e$, thus we obtain local uniform lower bound. And then, we consider larger admissible class of initial data allowing $L^1$ perturbation from given function $g_i$. If $\alpha$ is in supercritical range, it should be noticed that this tail, $|x|^{-\fr{2}{\alpha}}$, is $L^1$ integrable as Barenblatt profile is integrable on solution of PME in this range. It allows us to approximate any $L^1$ initial data and recover the  same result as \cite{SV}. In subcritical range, the decay profile of the initial data at infinity is usually assumed  in analysis of FDE. 
For instance, we have $L^1$-contraction property in the class of solutions satisfying the decay rate, $|x|^{-\fr{2}{\alpha}}$. Moreover, those results in \cite{BBDGV}, \cite{BDGV}, \cite{DKS}, \cite{DS}, \cite{DS2}, \cite{H}, \cite{H2}, \cite{HK}, \cite{HK2} assume their initial profiles are trapped between two Barenblatt profiles, $L^1$-perturbation from them, or decay rate slower than $|x|^{-\fr{2}{\alpha}}$.
 Finally with a similar idea, we can also control local uniform upper bounds of solutions which will allow us to consider initial data having some growth condition beyond $L^\infty$-bound.
\\

Now we are going to show local uniform lower bound of equation in critical and subcritical exponents. Since we have the comparison principle, we try to find lower bound using barrier argument. Unlike from PME, FDE, and other diffusion equations where numerous examples of exact solutions are known from self-similarity, it is not trivial to find an exact solution, or even subsolution of this system \eqref{eq-main}. We couldn't find any spatially global explicit subsolution, but could found a local explicit one. Roughly, this local subsolution will be used to make local lower bound by following steps below.

\begin{enumerate}
\item Assume $g_i \ge C\cdot f_0$ for some nonnegative function $f_0$ with $C>1$, which we know explicit subsolution wof target diffusion equation with this initial data $f_0$.
\item We will find a solution $f(t)$ of target diffusion equation \eqref{eq-fastdiff} with initial data $f(0)=f_0$, and then perturb $f$ to get $\{f^{\e,R,T}_i\}$ which is subsolution of \eqref{eq-main} at least on $B(0,R+\fr{T}{\e})\times (0,T)$. We will  attain $\{f^{\e,R,T}_i\}$ by adding the limit profile  $\{\fr{
f}{2n}\}$ with  correcting terms of order $\e$ and $\e^2$.
\item We show $f^{\e,R,T}_i(0) \le g_i$ on $B(0,R+\fr{T}{\e})$. This proves $u^\e_i \ge f^{\e,R,T}_i$ on $B(0,R)\times (0,T)=Q_R^T$ by comparison principle, Theorem \ref{thm-MP} (ii).
\item Find a function g of the form $c\cdot f$ such that $g\le  f^{\e,R,T}$ on $Q_R^T$ for all $\e, \ R >0$. This shows $\ue_i\ge g>0 $ independently with $\e$.
\item We can use local entropy estimates and  find hydrodynamical limit for a short time (0,T), which is a solution of the limit equation \eqref{eq-fastdiff}.
\item Now we increase T as much as possible.
\end{enumerate}\mbox{}

\subsection{Formal Asymptotic Expansion}\mbox{}\\

As mentioned before, previous research on this problem in 1-D case was done in semi-group framework, \cite{K}\cite{F2}. They used semi-group convergence theorem in order to approximate semi-group of the limit equation from semi-group of Carleman equation. It was essential to find series of initial data $u^\e_i$ which converge  to $u_i$ and $\lim_{\e \to 0}A^\e(\{u^\e_i\})=A(\{u_i\})$ in some sense. In previous statement, $A^\e$ is semi-group generator of Carleman System and $A$ is generator of its limit  equation. To do so, a kind of first order asymptotic expansion of \eqref{eq-main} from $\fr{\rho}{2}$ was employed. To construct subsolution, we try this ansatz again. However, it turns out that the first order expansion is not enough to make proper approximation, so we consider  the second order approximation.

Let us express $u_i, i=1,\ldots,2n$, as
\begin{equation}\label{eq-general-0-1}u_i=\fr{1}{2n}(\rho+A_i\e+B_i\e^2)
\end{equation}
where $A_i$ and $B_i$ are functions of $(x,t)$. We want to find certain relation between $A_i$ and $ B_i$ so that \eqref{eq-general-0-1} is an approximated solution \eqref{eq-main} upto order $\e$.

To compute $\partial_t u_i + \fr{1}{\e}v_i\cdot Du_i -\fr{1}{2n\e^2}\sum_j(u_i+u_j)^\alpha(u_j-u_i)$, we need the following Taylor's expansion.

\begin{lem} \label{lem-taylor}
Let $f(x)=(1+ax+bx^2)^\alpha$ where $\alpha \in (0,1]$. Then,
\begin{equation}\label{eq-remainder}\begin{aligned}& f(x)=1+\alpha ax+\binom{\alpha}{2}(ax)^2+\alpha bx^2+R_{a,b}(x)x^3 \quad{ and}\\ &R_{a,b}(x)=\left[\binom{\alpha}{3}(1+a\zeta+b\zeta^2)^{\alpha-3}(a+2b\zeta)^3+2\binom{\alpha}{2}(1+a\zeta+b\zeta^2)^{\alpha-2}(a+2b\zeta)b\right]\end{aligned}\end{equation}
for some $\zeta$ between 0 and x.
\begin{proof}It comes from the standard remainder estimate in Taylor's Theorem.
\end{proof}
\end{lem}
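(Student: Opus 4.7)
The plan is to apply Taylor's theorem with Lagrange remainder at order three, expanded about $x=0$; the entire statement is then just a matter of reading off the first three derivatives of $f$ at zero and expressing the third derivative at an intermediate point. The three summands of the polynomial part will correspond to $f(0)$, $f'(0)x$, and $\tfrac{1}{2}f''(0)x^2$, while $R_{a,b}(x)x^3$ must match $\tfrac{1}{6}f'''(\zeta)x^3$ for some $\zeta$ between $0$ and $x$.

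First I would introduce the auxiliary function $g(x):=1+ax+bx^2$, noting that $g'(x)=a+2bx$, $g''(x)=2b$, and $g'''\equiv 0$. Writing $f=g^\alpha$ and applying the chain rule yields
\[
f'=\alpha g^{\alpha-1}g',\qquad f''=\alpha(\alpha-1)g^{\alpha-2}(g')^2+\alpha g^{\alpha-1}g'',
\]
and one more differentiation, using $g'''\equiv 0$ to drop a term, gives
\[
f'''=\alpha(\alpha-1)(\alpha-2)g^{\alpha-3}(g')^3+3\alpha(\alpha-1)g^{\alpha-2}g'g''.
\]
Evaluating at $x=0$ (where $g=1$, $g'=a$, $g''=2b$) produces $f(0)=1$, $f'(0)=\alpha a$, and $f''(0)=\alpha(\alpha-1)a^2+2\alpha b$, so the degree-two Taylor polynomial is
\[
1+\alpha a x+\tfrac{1}{2}\bigl(\alpha(\alpha-1)a^2+2\alpha b\bigr)x^2 = 1+\alpha a x + \binom{\alpha}{2}(ax)^2 + \alpha b x^2,
\]
matching exactly the polynomial portion of \eqref{eq-remainder}.

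Finally, Taylor's theorem provides some $\zeta$ strictly between $0$ and $x$ with $f(x)$ equal to this polynomial plus $\tfrac{1}{6}f'''(\zeta)x^3$. Substituting the expression for $f'''(\zeta)$ and using the identities $\tfrac{1}{6}\alpha(\alpha-1)(\alpha-2)=\binom{\alpha}{3}$ together with $\tfrac{1}{6}\cdot 3\alpha(\alpha-1)\cdot g''(\zeta) = 2b\binom{\alpha}{2}$ reproduces the bracketed expression for $R_{a,b}(x)$. There is no serious obstacle; the only implicit hypothesis one should track, for the later applications where $x$ is replaced by a small quantity of order $\e$, is that $1+a\zeta+b\zeta^2$ remains positive on the interval between $0$ and $x$ so that the fractional powers in $f'''$ are classically well defined.
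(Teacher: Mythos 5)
Your proposal is correct and follows exactly the route the paper intends: the paper's proof is simply the citation of Taylor's theorem with Lagrange remainder at third order, and your explicit computation of $f'$, $f''$, $f'''$ for $f=g^\alpha$ with $g(x)=1+ax+bx^2$ (using $g'''\equiv0$) together with the identities $\tfrac{1}{6}\alpha(\alpha-1)(\alpha-2)=\binom{\alpha}{3}$ and $\tfrac{1}{2}\alpha(\alpha-1)\cdot 2b=2b\binom{\alpha}{2}$ reproduces \eqref{eq-remainder} verbatim. Your side remark that $1+a\zeta+b\zeta^2$ must stay positive on the interval between $0$ and $x$ is a sensible caveat that the paper handles implicitly in its later applications, where the smallness conditions on $a\e$ and $b\e^2$ guarantee it.
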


By applying the lemma above in $(u_i+u_j)^\alpha=\left(\fr{\rho}{n}\right)^\alpha(1+\fr{A_i+A_j}{2\rho}\e+\fr{B_i+B_j}{2\rho}\e^2)^\alpha$, we have
\begin{equation}
\begin{aligned}
&\fr{1}{2n\e^2}(u_i+u_j)^\alpha (u_i-u_j)=\fr{1}{(2n)^2}\left(\fr{\rho}{n}\right)^\alpha\left[\fr{1}{\e}(A_i-A_j)+\left\{\fr{\alpha}{2\rho}(A_i^2-A_j^2) +B_i-B_j\right\}\right.\\
& +\e\left\{\fr{\alpha}{\rho}(A_iB_i-A_jB_j)+\binom{\alpha}{2}\left(\fr{1}{2\rho}\right)^2(A_i^3+A_i^2A_j-A_iA_j^2-A_j^3) \right\} \\
&+ \e^2 \left\{\fr{\alpha}{2\rho}(B_i^2-B_j^2)+ \binom{\alpha}{2}\left(\fr{1}{2\rho}\right)^2(A_i+A_j)^2(B_i-B_j)+\left(  R_{\fr{A_i+A_j}{2\rho},\fr{B_i+B_j}{2\rho}}(\e)\right)(A_i-A_j)\right\}\\
&\left. +\e^3\left\{ \left(  R_{\fr{A_i+A_j}{2\rho},\fr{B_i+B_j}{2\rho}}(\e)\right)(B_i-B_j) \right\}\right],
\end{aligned}
\end{equation}

and we expect from \eqref{eq-main}
\begin{equation}\label{eq-expansion}
\begin{aligned}
&\partial_t u_i + \fr{1}{\e} D_{v_i}u  -\fr{1}{2n\e^2}\sum_j(u_i+u_j)^\alpha(u_j-u_i)=\fr{1}{2n\e}\left[ D_{v_i}\rho+\fr{1}{2n}\left(\fr{\rho}{n}\right)^\alpha(2nA_i-\sum_jA_j)\right]\\&
+\fr{1}{2n}\left[\rho_t+D_{v_i}A_i +\fr{1}{2n}\left(\fr{\rho}{n}\right)^\alpha(2nB_i-\sum_jB_j+\fr{\alpha}{2\rho}(2nA_i^2-\sum_jA_j^2))\right] \\&
+\fr{\e}{2n}\left[\partial_tA_i+D_{v_i}B_i+\fr{1}{2n}\left(\fr{\rho}{n}\right)^\alpha \sum_j \left\{ \fr{\alpha}{\rho } (A_iB_i-A_jB_j)+\left(\fr{1}{2\rho}\right)^2\binom{\alpha}{2}\left((A_i^3+A_i^2A_j-A_iA_j^2-A_j^3)\right)\right\}\right]\\
&
+\fr{\e^2}{2n}\left[\partial_t B_i +\fr{1}{2n}\left(\fr{\rho}{n}\right)^\alpha\sum_j\left\{\fr{\alpha}{2\rho}(B_i^2-B_j^2)+ \binom{\alpha}{2}\left(\fr{1}{2\rho}\right)^2(A_i+A_j)^2(B_i-B_j)+\left(  R_{\fr{A_i+A_j}{2\rho},\fr{B_i+B_j}{2\rho}}(\e)\right)(A_i-A_j)\right\}\right]\\
\end{aligned}
\end{equation}

$$\begin{aligned}
& +\fr{\e^3}{2n}\left[\fr{1}{2n}\left(\fr{\rho}{n}\right)^\alpha\sum_j \left(  R_{\fr{A_i+A_j}{2\rho},\fr{B_i+B_j}{2\rho}}(\e)\right)(B_i-B_j)  \right]  \\
&=O(\e).    
\end{aligned}$$

 Since \eqref{eq-expansion} holds for each $\e$, we have
 \begin{equation}\label{eq-general-0-2}
 \fr{1}{2n\e}\left[ D_{v_i}\rho+\fr{1}{2n}\left(\fr{\rho}{n}\right)^\alpha(2nA_i-\sum_jA_j)\right]=0
 \end{equation} and
  \begin{equation}\label{eq-general-0-3}\fr{1}{2n}\left[\rho_t+D_{v_i}A_i +\fr{1}{2n}\left(\fr{\rho}{n}\right)^\alpha(2nB_i-\sum_jB_j+\fr{\alpha}{2\rho}(2nA_i^2-\sum_jA_j^2))\right]=0.
 \end{equation}
  From natural symmetries $A_i=-A_{i+n}$, we get $\sum_j A_j =0$ and $A_i=-\left(\fr{n}{\rho}\right)^\alpha D_{v_i}\rho$ from \eqref{eq-general-0-2}. Next, if we plug these $A_i$s into \eqref{eq-general-0-3}, we find \eqref{eq-general-0-3} for $i$ and $i+n$ will be equal. So, we could assume $B_i=B_{i+n}$. Using $\rho _t = \D \cdot(\fr{1}{n\left(\fr{\rho}{n}\right)^\alpha}\D \rho)$, we eventually get $B_i=B_{i+n}=\left(\fr{n}{\rho}\right)^{\alpha}\partial_i\left(\left(\fr{n}{\rho}\right)^{\alpha}\partial_i\rho\right)-\fr{\alpha}{2\rho}\left(\left(\fr{n}{\rho}\right)^\alpha \partial_i\rho \right)^2$.

From now on, for any given $\rho$, we define
\begin{equation}\label{eq-AB}
\begin{aligned}
&A_i=-\left(\fr{n}{\rho}\right)^\alpha D_{v_i}\rho=\begin{cases}-\left(\fr{n}{\rho}\right)^\alpha \partial_i\rho, \quad 1\le i \le n \\ \left(\fr{n}{\rho}\right)^\alpha \partial_{i-n}\rho,\quad n+1\le i \le 2n \end{cases}\text{ and } \\ &B_i=B_{i+n}=\left(\fr{n}{\rho}\right)^{\alpha}\partial_i\left(\left(\fr{n}{\rho}\right)^{\alpha}\partial_i\rho\right)-\fr{\alpha}{2\rho}\left(\left(\fr{n}{\rho}\right)^\alpha \partial_i\rho \right)^2 \quad 1\le i \le n.
\end{aligned}
\end{equation} and it is worth noting that \eqref{eq-general-0-3} becomes $$\fr{1}{2n}\left[\partial_t\rho-\D\cdot\left(\fr{1}{n^{1-\alpha} \cdot \rho^\alpha}\D \rho\right)\right]$$ under these definitions.
{
\newline

\subsection{Local Subsolutions for $0\le 1-\alpha< \fr{n-2}{n}$ }\label{subsec-general-2}\mbox{}\\

As mentioned before, local subsolutions could be found for all $0\le 1-\alpha \le 2$. However, they will be constructed differently because self-similar solutions of  Barenblatt type are written in different form. We illustrate the tricky case $0< 1-\alpha< \fr{n-2}{n}$.  Other cases will be explained in Section \ref{sec-general2}.\\

One typical self-similar solution of  \begin{equation}\label{eq-fastdiff}\rho _t = \D \cdot(\fr{1}{n^{1-\alpha}\rho^\alpha}\D \rho)\end{equation} for  $0\le 1-\alpha< \fr{n-2}{n}$ is $$\rho=C_\alpha \left( \fr{T-t}{|x|^2}\right)^\fr{1}{\alpha}\text{, where }(C_\alpha)^\alpha=2n^{\alpha-1}\left(n-\fr{2}{\alpha}\right).$$

On the other hand, our subsolution should lie below a given initial data, which requires its $L^\infty$ bound. Thus, we modify $\rho$ to be a bounded subsolution.

\begin{prop}\label{prop-barrier}
If $n\ge3$, $0\le 1-\alpha< \fr{n-2}{n}$, and $R>0$,\begin{equation}\label{eq-barrier}
\Psi_{\alpha,n,R,T}(x,t)=C_\alpha \left(\fr{T-ct}{|x|^2+R^2}\right)^\fr{1}{\alpha}, \quad c-1>\fr{2}{\alpha}\fr{1}{n-\fr{2}{\alpha}}
\end{equation} is explicit example of bounded subsolution and
\begin{equation}
\Psi_t-\D \cdot(\fr{1}{n^{1-\alpha}\Psi^\alpha}\D \Psi)\le -\left((c-1)\left(n-\fr{2}{\alpha}\right)-\fr{2}{\alpha}\right)\fr{2}{\alpha}\fr{\Psi^{1-\alpha}}{n^{1-\alpha}}\fr{1}{|x|^2+R^2}.
\end{equation}
\begin{proof}First, notice that

$$
\begin{aligned}
&\Psi_t=\fr{-c}{\alpha(T-ct)}\Psi=C_\alpha^\alpha\fr{-c}{\alpha(|x|^2+R^2)}\Psi^{1-\alpha},\\
&\Psi_i=-\fr{2}{\alpha}\fr{x_i}{|x|^2+R^2}\Psi,\text{ and}\\
&\Psi_{ii}=\left(\fr{2}{\alpha}\right)^2\fr{x_i^2}{(|x|^2+R^2)^2}\Psi-\fr{2}{\alpha}\fr{1}{|x|^2+R^2}\left(1-\fr{2x_i^2}{|x|^2+R^2}\right)\Psi.
\end{aligned}
$$

We now we obtain$$\Psi_t-\D \cdot(\fr{1}{n^{1-\alpha}\Psi^\alpha}\D \Psi)=\fr{2\Psi^{1-\alpha}}{\alpha n^{1-\alpha}(|x|^2+R^2)^2}\left[
\fr{2}{\alpha}R^2-(c-1)\left(n-\fr{2}{\alpha}\right)(|x|^2+R^2)\right]<0.$$
\end{proof}
\end{prop}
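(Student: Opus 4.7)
The approach is a direct pointwise computation: differentiate $\Psi$, substitute into the nonlinear parabolic operator, and isolate the sign-determining term using the defining identity $C_\alpha^\alpha = 2n^{\alpha-1}(n-\tfrac{2}{\alpha})$ together with the lower bound on $c$. No barrier or maximum-principle argument is needed at this stage; the statement is purely algebraic.

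First I would compute the derivatives in closed form. Writing $\log\Psi = \log C_\alpha + \tfrac{1}{\alpha}[\log(T-ct) - \log(|x|^2+R^2)]$ gives immediately
\[
\Psi_t = -\frac{c}{\alpha(T-ct)}\Psi, \qquad \partial_i\Psi = -\frac{2x_i}{\alpha(|x|^2+R^2)}\Psi,
\]
and one further differentiation yields $\partial_{ii}\Psi$. Rather than handle the divergence form directly, I would expand $\nabla\cdot(\Psi^{-\alpha}\nabla\Psi) = \Psi^{-\alpha}\Delta\Psi - \alpha\Psi^{-\alpha-1}|\nabla\Psi|^2$. Both $\Delta\Psi$ and $|\nabla\Psi|^2$ factor as $\Psi$ times a rational function of $|x|^2/(|x|^2+R^2)$, so after inserting the $\Psi^{-\alpha}$ weights the whole expression collapses to the common form $\Psi^{1-\alpha}/(|x|^2+R^2)^2$ multiplied by a polynomial in $|x|^2$ and $R^2$. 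I would then use $C_\alpha^\alpha\,\Psi^{-\alpha} = (|x|^2+R^2)/(T-ct)$ to rewrite $\Psi_t$ in exactly the same scaling.

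Finally, I would group the result by powers of $(|x|^2+R^2)$: the coefficient of $(|x|^2+R^2)$ picks up $-c$ from $\Psi_t$ and $+1$ from the diffusion, both carrying the factor $n-\tfrac{2}{\alpha}$ inherited from $C_\alpha^\alpha$, while the defect from taking $R>0$ produces a residual $\tfrac{2}{\alpha}R^2$ after the arithmetic identity $n - 2(1+\tfrac{1}{\alpha}) + 2 = n-\tfrac{2}{\alpha}$ eliminates the stray $|x|^2$ terms. The algebraic sanity check is that $c=1$, $R=0$ must recover the exact self-similar solution $C_\alpha(T/|x|^2)^{1/\alpha}$, which forces all remaining terms to cancel. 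This yields the exact equality
\[
\Psi_t - \nabla\cdot\!\left(\frac{\nabla\Psi}{n^{1-\alpha}\Psi^\alpha}\right) = \frac{2\Psi^{1-\alpha}}{\alpha n^{1-\alpha}(|x|^2+R^2)^2}\left[\tfrac{2}{\alpha}R^2 - (c-1)\bigl(n-\tfrac{2}{\alpha}\bigr)(|x|^2+R^2)\right],
\]
from which the stated bound follows by replacing $R^2$ with the larger quantity $|x|^2+R^2$ inside the bracket; the hypothesis $c-1 > \tfrac{2}{\alpha}\,\tfrac{1}{n-2/\alpha}$, together with $n-\tfrac{2}{\alpha}>0$ (which is exactly the assumed range $1-\alpha<\tfrac{n-2}{n}$ and also what makes $C_\alpha$ real and positive), makes the bracket strictly negative with a quantitative margin. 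The only genuine obstacle is the bookkeeping of numerical constants and confirming the cancellation of the principal $|x|^2$ terms, but once the Barenblatt identity for $C_\alpha^\alpha$ is invoked the computation is short.
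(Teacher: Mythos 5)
Your proposal is correct and follows essentially the same route as the paper: direct differentiation of $\Psi$, expansion of the divergence term, use of the identity $C_\alpha^\alpha=2n^{\alpha-1}(n-\tfrac{2}{\alpha})$ to put $\Psi_t$ on the same footing, and the resulting exact identity with bracket $\tfrac{2}{\alpha}R^2-(c-1)(n-\tfrac{2}{\alpha})(|x|^2+R^2)$ matches the paper's computation verbatim. Your final step of bounding $R^2$ by $|x|^2+R^2$ to obtain the stated quantitative inequality is exactly what the paper's argument implicitly uses, so there is nothing to add.
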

We will use the notation $\Psi$ instead of $\Psi_{\alpha,n,R,T}$ if there is no confusion and constant $c$ is fixed throughout the paper. This strict subsolution $\Psi(x,t)$ will be used to compensate error terms of order $\e$ and $\e^2$ on the RHS of \eqref{eq-expansion}.

\begin{prop}\label{prop-general-1}
For $\Psi(x,t)$, there is $c_3=c_3(c,n,\alpha)>0$ such that LHS of \eqref{eq-expansion} for $\Psi$ is negative for all $ i=1,\ldots,2n$ on $(|x|^2+R^2)^\fr{1}{2}<\fr{c_3}{\e}(T-tc)$ and $t\in[0,\fr{T}{c})$.
\begin{proof}All constants $c_i$ below will depend only on $n,\alpha,c$.

By plugging  \eqref{eq-barrier} in the definitions of $A_i$ and $B_i$, we get
\begin{equation}\label{eq-general-3}\begin{aligned}
A_i&=\fr{2n^\alpha}{\alpha}\fr{x\cdot v_i}{|x|^2+R^2}\Psi^{1-\alpha}\quad \text{ and } \\
B_i&=\fr{2n^\alpha}{\alpha}\left( \left(\fr{2}{\alpha}-1\right)\fr{x_i^2}{|x|^2+R^2}-1\right)\fr{\Psi^{1-2\alpha}}{|x|^2+R^2}.
\end{aligned}
\end{equation}
By using  Proposition \ref{prop-barrier} and the expansion of \eqref{eq-expansion},
\begin{equation}\begin{aligned}
\text{LHS of }\eqref{eq-expansion}\le \fr{1}{\e}\cdot0  - \left((c-1)\left(n-\fr{2}{\alpha}\right)-\fr{2}{\alpha}\right)\fr{2}{\alpha}\fr{\Psi^{1-\alpha}}{n^{1-\alpha}}\fr{1}{|x|^2+R^2}\\ +\text{ higher order terms.}
\end{aligned}
\end{equation}

Since \begin{equation}\label{eq-relation}\fr{\Psi}{T-ct}=(C_\alpha)^\alpha\fr{\Psi^{1-\alpha}}{|x|^2+R^2},\end{equation}this could be rewritten
\begin{equation}
\label{eq-general-4-1}
\begin{aligned}
\text{LHS of }\eqref{eq-expansion}\le -c_1\fr{\Psi}{T-ct}+\text{higher order terms},\quad c_1>0.
\end{aligned}
\end{equation}

In order to control higher order terms, we need some estimates on $A_i$ and $B_i$.
The followings are not hard to check from \eqref{eq-general-3} and \eqref{eq-relation}:
\begin{equation}\label{eq-general-4-2}\begin{aligned}
|A_i|&\le c_0\fr{\Psi}{T-ct}(|x|^2+R^2)^\fr{1}{2},\quad &&|\partial_tA_i|\le c_0 \fr{\Psi}{(T-ct)^2}(|x|^2+R^2)^\fr{1}{2}\\
|B_i|&\le c_0\fr{\Psi}{(T-ct)^2}(|x|^2+R^2),\quad &&|D_{v_i}B_i|\le c_0\fr{\Psi}{(T-ct)^2}(|x|^2+R^2)^\fr{1}{2}\\
|\partial_t B_i| &\le c_0 \fr{\Psi}{(T-ct)^3}(|x|^2+R^2),\quad&& |\Psi^\alpha A_i|\le c_0 \fr{\Psi}{(|x|^2+R^2)^\fr{1}{2}}\\
|\Psi^\alpha B_i|&\le c_0 \fr{\Psi}{(T-ct)}  \quad &&\text{ and so on.}
\end{aligned}
\end{equation}
\\
Therefore, $\e$-order terms in \eqref{eq-expansion} will satisfy \begin{equation}\begin{aligned}
\fr{\e}{2n}&\left[\partial_tA_i  +D_{v_i}B_i+\fr{\alpha}{2\Psi n} \left(\fr{\Psi}{n}\right)^\alpha(2nA_iB_i-\sum_jA_jB_j)\right. \\ &\left.+\fr{1}{8\Psi^2 n}\binom{\alpha}{2}\left(\fr{\Psi}{n}\right)^\alpha(\sum_j(A_i^3+A_i^2A_j-A_iA_j^2-A_j^3))\right]\\
&\le c_2\fr{\Psi}{T-ct}\fr{(|x|^2+R^2)^{\fr{1}{2}}\e}{T-ct} \quad \text{by \eqref{eq-general-4-2}}
\end{aligned}
\end{equation}
Next, we need to control remainder $R$. For this, let us assume
\begin{equation}
\fr{(|x|^2+R^2)^\fr{1}{2}\e}{T-ct}\le c_{3}.
\end{equation}Thus, we have $$\begin{aligned}
&a:=\left|\fr{A_i+A_j}{2\Psi}\right|\le c_0\left(\fr{(|x|^2+R^2)^{\fr{1}{2}}}{T-ct}\right)\le\fr{c_3c_0}{\e}\\
&b:=\left|\fr{B_i+B_j}{2\Psi}\right|\le c_0\left(\fr{(|x|^2+R^2)^{\fr{1}{2}}}{T-ct}\right)^2\le \fr{c_3^2c_0}{\e^2}. 
\end{aligned} $$
By taking $c_3$ small enough so that $(c_{3}+c_{3}^2)c_0<\fr{1}{2}$,
a bound on remainder terms $R_{\fr{A_i+A_j}{2\Psi},\fr{B_i+B_j}{2\Psi}}(\e)$ in equation \eqref{eq-expansion} could be obtained from \eqref{eq-remainder} as follows:
\begin{equation}\label{eq-remainderestimate}
\begin{aligned}
|R(\e)|&\le \left[\binom{\alpha}{3}\left(\fr{3}{2}\right)^{\alpha-3}(|a|+2|b\zeta|)^3+2\binom{\alpha}{2}\left(\fr{3}{2}\right)^{\alpha-1}(|a|+2|b\zeta|)|b|\right]\\
&\le \fr{1}{\e} \left[\binom{\alpha}{3}\left(\fr{3}{2}\right)^{\alpha-3}(|a|+2|b\zeta|)^2+2\binom{\alpha}{2}\left(\fr{3}{2}\right)^{\alpha-1}|b|\right]\\
&\le c_4 \left[\fr{1}{\e}\left(\fr{(|x|^2+R^2)^{\fr{1}{2}}}{T-ct}\right)^2+ \left(\fr{(|x|^2+R^2)^{\fr{1}{2}}}{T-ct}\right)^3+\e\left(\fr{(|x|^2+R^2)^{\fr{1}{2}}}{T-ct}\right)^4\right]
\end{aligned}
\end{equation}
and now we can control $\e^2$ and higher order terms in \eqref{eq-expansion} using \eqref{eq-relation},\eqref{eq-general-4-2} and \eqref{eq-remainderestimate} :
\begin{equation}
\begin{aligned}
&\fr{\e^2}{2n}\left[\partial_t B_i +\fr{1}{2n}\left(\fr{\Psi}{n}\right)^\alpha\sum_j\left\{\fr{\alpha}{2\Psi}(B_i^2-B_j^2)+ \binom{\alpha}{2}\left(\fr{1}{2\Psi}\right)^2(A_i+A_j)^2(B_i-B_j)+ R(\e)(A_i-A_j)\right\}\right]\\
&\le c_5\fr{\Psi}{T-ct}\left[\left(\fr{(|x|^2+R^2)^{\fr{1}{2}}\e}{T-ct}\right)+ \left(\fr{(|x|^2+R^2)^{\fr{1}{2}}\e}{T-ct}\right)^2+\left(\fr{(|x|^2+R^2)^{\fr{1}{2}}\e}{T-ct}\right)^3\right]\\
&\text{\qquad and}\\
&\fr{\e^3}{2n}\left[\fr{1}{2n}\left(\fr{\Psi}{n}\right)^\alpha\sum_j  R(\e)(B_i-B_j)  \right]\\
&\le c_5\fr{\Psi}{T-ct}\left[\left(\fr{(|x|^2+R^2)^{\fr{1}{2}}\e}{T-ct}\right)^2+ \left(\fr{(|x|^2+R^2)^{\fr{1}{2}}\e}{T-ct}\right)^3+\left(\fr{(|x|^2+R^2)^{\fr{1}{2}}\e}{T-ct}\right)^4\right].
\end{aligned}
\end{equation}

In conclusion, there is some positive constant $c_3$ such that
\begin{equation}\begin{aligned}
\text{LHS of }\eqref{eq-expansion}\le  \left[-c_1+(c_2+c_5)c_3+2c_5c_3^2 + 2c_5c_3^3+c_5c_3^4\right]\fr{\Psi}{T-ct}
\end{aligned}
\end{equation} on $(x,t)$ satisfying $\fr{(|x|^2+R^2)^\fr{1}{2}\e}{T-ct}< c_{3}$. 

Finally, we get the desired inequality by changing $c_3$ into smaller one so that $$-c_1+(c_2+c_5)c_3+2c_5c_3^2 + 2c_5c_3^3+c_5c_3^4<0.$$ 
\end{proof}

\begin{remark} \label{remark-general-1}
We can notice the proof of Proposition \ref{prop-general-1} will not be applied if $\rho$ decays faster  than $\left(\fr{1}{|x|^2}\right)^\fr{1}{\alpha}$. The $\e$ and $\e ^2$ order terms in RHS of \eqref{eq-expansion} can't be controlled by $0$-th order term on the region $(|x|^2+R^2)^\fr{1}{2}<\fr{c_4}{\e}(T-tc)$ for some fixed $c_4$ (independent of $\epsilon$). In fact, if $\rho$ has faster decay, $c_4$ should get smaller as $\e \to 0$.
\end{remark}

\end{prop}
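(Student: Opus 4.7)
The plan is to plug the ansatz $u_i=\fr{1}{2n}(\Psi+A_i\e+B_i\e^2)$ with $A_i,B_i$ defined from $\Psi$ via \eqref{eq-AB} directly into the expansion \eqref{eq-expansion} and show that the strict negativity of the leading order already supplied by Proposition \ref{prop-barrier} swallows every higher order correction, provided we restrict to the cone $(|x|^2+R^2)^{1/2}\e\le c_3(T-ct)$. First I would observe that the $\fr{1}{\e}$ order term on the RHS of \eqref{eq-expansion} vanishes identically by the choice $A_i=-\bl\fr{n}{\Psi}\br^\alpha D_{v_i}\Psi$ (that was exactly the algebraic content of \eqref{eq-general-0-2}), so the $\fr{1}{\e}$ piece contributes nothing.

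Next, the $0$-th order piece of \eqref{eq-expansion} is, by the computation in \eqref{eq-general-0-3}, precisely $\fr{1}{2n}[\Psi_t-\D\cdot(\fr{1}{n^{1-\alpha}\Psi^\alpha}\D\Psi)]$. Proposition \ref{prop-barrier} gives a strictly negative bound on this quantity, and combined with the key scaling identity \eqref{eq-relation} we obtain
\begin{equation}
\text{($0$-th order term)}\le -c_1\,\fr{\Psi}{T-ct},\qquad c_1=c_1(n,\alpha,c)>0.
\end{equation}
This is the reserve of negativity that must dominate everything else.

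The bulk of the work is bookkeeping on the $\e$, $\e^2$, and $\e^3$ terms. Differentiating the explicit form of $\Psi$ and using \eqref{eq-general-3}, a direct computation yields the size estimates \eqref{eq-general-4-2} for $A_i,B_i$ and their derivatives, each a product of $\fr{\Psi}{T-ct}$ with a nonnegative power of $\fr{(|x|^2+R^2)^{1/2}}{T-ct}$. Inserting these into the $\e$-term of \eqref{eq-expansion} produces a bound of the form $c_2\fr{\Psi}{T-ct}\cdot\fr{(|x|^2+R^2)^{1/2}\e}{T-ct}$, i.e.\ the leading negative term times one power of the small parameter $s:=\fr{(|x|^2+R^2)^{1/2}\e}{T-ct}$. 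The $\e^2$ and $\e^3$ terms will similarly contribute $\fr{\Psi}{T-ct}$ times a polynomial in $s$ with no constant term. The single subtle point is the remainder $R_{(A_i+A_j)/2\Psi,(B_i+B_j)/2\Psi}(\e)$ coming from Lemma \ref{lem-taylor}; to apply that lemma in the useful form \eqref{eq-remainder} one must keep the base $1+a\e+b\e^2$ bounded away from zero. This is exactly where the threshold $c_3$ enters: once we restrict to $s\le c_3$, the bounds $|a|\lesssim s/\e$ and $|b|\lesssim s^2/\e^2$ make $a\e$ and $b\e^2$ uniformly small, so $R$ is controlled by polynomial expressions in $s$ as in \eqref{eq-remainderestimate}. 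The main obstacle of the proof is really this one: making sure the region of control is nonempty and that the Taylor remainder estimate actually kicks in uniformly in $\e$.

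Finally, I would collect everything as
\begin{equation}
\text{LHS of \eqref{eq-expansion}}\le \bll -c_1+P(s)\brr\fr{\Psi}{T-ct},\qquad P(s)=(c_2+c_5)s+2c_5s^2+2c_5s^3+c_5s^4,
\end{equation}
with $P(0)=0$. Shrinking $c_3$ once more so that $P(c_3)<c_1$ gives strict negativity throughout the cone $s\le c_3$, which is the claim. The upper bound $t<T/c$ is automatic from positivity of $T-ct$ appearing in the denominators.
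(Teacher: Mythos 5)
Your proposal is correct and follows essentially the same route as the paper: the $\tfrac{1}{\e}$ term vanishes by the choice of $A_i$, the zeroth-order term is made strictly negative via Proposition \ref{prop-barrier} together with the scaling identity \eqref{eq-relation}, the size estimates \eqref{eq-general-4-2} control the $\e$, $\e^2$, and $\e^3$ terms as powers of $s=\fr{(|x|^2+R^2)^{1/2}\e}{T-ct}$, and the Taylor remainder is handled by restricting to $s\le c_3$ so the base stays bounded away from zero. Collecting everything into a polynomial in $s$ with no constant term and shrinking $c_3$ is exactly the paper's concluding step.
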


\begin{prop}\label{prop-general-2}
In above setting, if $\overline{ u_{i}}$ are defined as \eqref{eq-general-0-1} using $\Psi$ in place of $\rho$, there exists $c_6=c_6(c,n,\alpha)>0$ {\it s.t.} $$\fr{1}{4n}\Psi \le \overline{ u_i^\e} \le \fr{3}{4n}\Psi\quad\text{ for all }  i=1,\ldots,2n$$ on $(|x|^2+R^2)^\fr{1}{2}<\fr{c_6}{\e}(T-tc)$ and $t\in[0,\fr{T}{c})$.
\begin{proof}
It follows from a simple computation with  \eqref{eq-general-4-2}\end{proof}
\end{prop}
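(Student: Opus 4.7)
The plan is to reduce the desired two-sided bound to a smallness condition on the correction terms $A_i\e$ and $B_i\e^2$, and then apply the pointwise estimates already recorded in \eqref{eq-general-4-2}. By the definition
$$\overline{u_i^\e}=\frac{1}{2n}\bigl(\Psi+A_i\e+B_i\e^2\bigr),$$
the conclusion $\frac{1}{4n}\Psi\le \overline{u_i^\e}\le \frac{3}{4n}\Psi$ is equivalent to the single inequality
$$|A_i\e+B_i\e^2|\le \tfrac{1}{2}\Psi$$
pointwise, so it suffices to control $A_i\e$ and $B_i\e^2$ separately in terms of $\Psi$.

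Introduce the dimensionless quantity
$$\xi(x,t):=\frac{(|x|^2+R^2)^{1/2}\e}{T-ct},$$
which is exactly the ratio that was used throughout the proof of Proposition~\ref{prop-general-1}. Substituting the bounds
$$|A_i|\le c_0\frac{\Psi}{T-ct}(|x|^2+R^2)^{1/2},\qquad |B_i|\le c_0\frac{\Psi}{(T-ct)^2}(|x|^2+R^2)$$
from \eqref{eq-general-4-2} immediately yields
$$|A_i\e|\le c_0\,\xi\,\Psi,\qquad |B_i\e^2|\le c_0\,\xi^2\,\Psi,$$
so
$$|A_i\e+B_i\e^2|\le c_0(\xi+\xi^2)\Psi.$$
Choosing $c_6=c_6(c,n,\alpha)>0$ small enough that $c_0(c_6+c_6^2)\le \tfrac{1}{2}$, the restriction $\xi<c_6$ (equivalently $(|x|^2+R^2)^{1/2}<\frac{c_6}{\e}(T-ct)$) forces the right-hand side to be at most $\tfrac{1}{2}\Psi$, which gives the claim on the stated region $t\in[0,T/c)$.

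There is no real obstacle here: the work of deriving the pointwise bounds for $A_i$ and $B_i$ was already carried out in \eqref{eq-general-4-2} while preparing Proposition~\ref{prop-general-1}, and the only care needed is to ensure that $c_6$ is taken (possibly smaller than, but) consistent with the constant $c_3$ selected there, so that both the subsolution property and the two-sided comparison $\tfrac{1}{4n}\Psi\le \overline{u_i^\e}\le \tfrac{3}{4n}\Psi$ hold on the same parabolic region. Since the requirement on $c_6$ is a purely algebraic one (smallness of $c_0(c_6+c_6^2)$), taking $c_6:=\min\{c_3,\,c_6\}$ with the algebraic $c_6$ just chosen suffices.
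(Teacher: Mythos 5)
Your proposal is correct and is essentially the computation the paper intends when it says the result "follows from a simple computation with \eqref{eq-general-4-2}": reduce the two-sided bound to $|A_i\e+B_i\e^2|\le\tfrac12\Psi$ and use the pointwise estimates on $A_i,B_i$ in terms of the ratio $\xi$. The final remark about intersecting with $c_3$ is not needed for this proposition itself (the paper keeps $c_6$ independent and only combines the constants later in Corollary \ref{cor-general-1}), but it does no harm.
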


By combining Proposition \ref{prop-general-1} and \ref{prop-general-2}, we have our desired family of local solutions.
\begin{corollary}\label{cor-general-1}
For all $\e>0$ and $\Psi_{\alpha,n,R,T}$ of \eqref{eq-barrier}, there exists a family $\{{\overline u_i^\e}\}_{i=1}^{2n}$ which is a subsolution of \eqref{eq-main} satisfying $\fr{1}{4n}\Psi \le \overline{ u_i^\e} \le \fr{3}{4n}\Psi$ on $(|x|^2+R^2)^\fr{1}{2}<\fr{\overline{ c}}{\e}(T-tc)$ and $t\in[0,\fr{T}{c})$. Constants $c$ and $\overline{c}$ depend on $n $ and $\alpha$.
\end{corollary}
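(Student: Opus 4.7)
The plan is to simply combine the two preceding propositions; the ansatz has already been built and all the analytic work has been done. Concretely, I would define
\begin{equation*}
\overline{u_i^\e}(x,t) \;=\; \frac{1}{2n}\bigl(\Psi + A_i\,\e + B_i\,\e^2\bigr), \qquad i = 1,\ldots,2n,
\end{equation*}
where $\Psi = \Psi_{\alpha,n,R,T}$ is the barrier from \eqref{eq-barrier} and $A_i$, $B_i$ are given by the formulas \eqref{eq-AB} with $\rho$ replaced by $\Psi$. This is exactly the ansatz \eqref{eq-general-0-1} into which Propositions \ref{prop-general-1} and \ref{prop-general-2} feed.

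Next, I would observe that the expression on the LHS of \eqref{eq-expansion} evaluated on this ansatz is precisely the defect that appears in the subsolution inequality of Definition \ref{def-sol} for the system \eqref{eq-main}. Hence Proposition \ref{prop-general-1} directly yields that $\{\overline{u_i^\e}\}_{i=1}^{2n}$ is a (classical, hence distributional) subsolution of \eqref{eq-main} on the parabolic region
\begin{equation*}
\Omega_{c_3} \;=\; \Bigl\{(x,t)\,:\,(|x|^2+R^2)^{1/2} < \tfrac{c_3}{\e}(T-ct),\ t \in [0,T/c)\Bigr\}.
\end{equation*}
Similarly, Proposition \ref{prop-general-2} gives the two-sided bound $\tfrac{1}{4n}\Psi \le \overline{u_i^\e} \le \tfrac{3}{4n}\Psi$ on the corresponding region $\Omega_{c_6}$ (with $c_6$ in place of $c_3$); in particular $\overline{u_i^\e} > 0$ there, so the definition of a subsolution of \eqref{eq-main} makes sense without any sign issue on the nonlinear term.

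Finally, I would set $\overline{c} := \min(c_3,c_6)$, so that $\Omega_{\overline{c}} \subset \Omega_{c_3} \cap \Omega_{c_6}$. On this common region both conclusions hold, giving exactly the statement of the corollary. There is no real obstacle: the only substantive step is to recognize that the ``negativity of the LHS of \eqref{eq-expansion}'' from Proposition \ref{prop-general-1} literally matches the subsolution inequality of Definition \ref{def-sol}, after which the proof is just an intersection of two previously established regions.
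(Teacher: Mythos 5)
Your proposal is correct and is essentially the paper's own argument: the paper proves the corollary exactly by combining Propositions \ref{prop-general-1} and \ref{prop-general-2}, with the ansatz \eqref{eq-general-0-1} built from $\Psi$ and \eqref{eq-AB}, and the LHS of \eqref{eq-expansion} is indeed the subsolution defect of Definition \ref{def-sol}. Your extra remarks (positivity from the two-sided bound and taking $\overline{c}=\min(c_3,c_6)$ to intersect the two regions) just make explicit what the paper leaves implicit.
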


\section{Diffusive limit with general initial data when $k=k_\alpha$}\label{sec-general2}

When the interaction rate is of our concern $k=k_\alpha$, we now prove diffusive limit convergence of \eqref{eq-main} toward \eqref{eq-target} with unbounded and degenerate initial data. The final goal is to prove the main result of this paper, Theorem \ref{thm-mmainthm}. As it is pointed, we show local positive bounds by using barriers obtained in Section \ref{sec-general}. Then, the rest of the proof would be a consequence of theory in Section \ref{sec-dl}. First of all, for these unbounded solutions, we need a generalized notion of solution \eqref{eq-main}.

\begin{definition}\label{def-mild} (Mild solution of \eqref{eq-main})
$\{u_i\}_{i=1}^{2n}\in C([0,T], L^1_{loc}(\R^n))$ is called mild solution of \eqref{eq-main} if this is $C([0,T],L^1(\R^n))$ limit of some weak solution of \eqref{eq-main} $\{u_{i}\}_n$.
\end{definition}

It is obvious from the definition and the $L^1$-contraction property of $k_\alpha$, $|\alpha|\le1$ that there exists a unique  mild solution when the initial data is $L^1$-perturbation of $L^\infty_{loc}$. {\it i.e.} there exist $f_i$ in $L^\infty_{loc}$ such that $f_i-g_i \in L^1(\R^n)$. Moreover, these solutions also have $L^1$-contraction property if two solutions are $L^1$ close initially.

Next, we need a local subsolution of Section \ref{subsec-general-2} for all ranges of $\alpha$. Thus let us introduce $\Psi$ in these ranges.

\begin{definition}\label{def-Psi}
For $n \ge 2$ and $\alpha\in [-1,1]$, let us define $\Psi_{\alpha,n,R,T}=\Psi_{R,T}$ as follows: ({\it c.f.} Proposition \ref{prop-barrier})
\begin{enumerate}

\item  If $\alpha\in (\fr{2}{n},1]$ and $n\ge3$,

$\Psi_{R,T}(x,t)=C_\alpha \left(\fr{T-ct}{|x|^2+R^2}\right)^\fr{1}{\alpha}, \ (C_\alpha)^\alpha=2n^{\alpha-1}\left(n-\fr{2}{\alpha}\right)\quad c-1>\fr{2}{\alpha}\fr{1}{n-\fr{2}{\alpha}}.$\\

\item  If $\alpha=\fr{2}{n}$ and $n\ge2$,

$\Psi_{R,T}(x,t)=\left(\fr{1}{n^{1-\alpha}}\right)^\fr{1}{\alpha}\left(\fr{T}{ \left(|x|^2+R^2e^{\fr{4nt}{T}}\right)(\fr{t}{T}+1) }  \right)^\fr{1}{\alpha}.$\\

\item  If $\alpha\in (0,\fr{2}{n})$ and $n\ge2$,

$\Psi_{R,T}={C}_\alpha \left(\fr{T}{|x|^2+R^2(t+T)^{\fr{4}{2-n\alpha}}}\right)^\fr{1}{\alpha},\quad ({C}_\alpha)^\alpha = 2n^{\alpha-1}\left(\fr{2}{\alpha}-n\right).$\\

\item  If $\alpha=0$ and $n\ge2$,

$\Psi_{R,T} =  \fr{R^2}{\left(4\pi {(t+T)}\right)^{\fr{n}{2}\overline{{c}}}}e^{-\fr{\hat{c}n|x|^2}{4(t+T)}},\quad \text{ \ } \overline{{c}}>\hat{c} >1$\\

\item  If $\alpha\in[-1,0)$ and $n\ge 2$,

$\Psi_{R,T}={C}_\alpha \left(\fr{t+T}{R^2T^\fr{2}{2-n\alpha}-|x|^2}\right)_+^\fr{1}{\alpha},\quad ({C}_\alpha)^\alpha = 2n^{\alpha-1}\left(n-\fr{2}{\alpha}\right).$

\end{enumerate}
\end{definition}
$\Psi$ in Definition \ref{def-Psi} are mostly small modifications of Barrenblat and fundamental soltuions of \eqref{eq-target} and they are subsolutions of \eqref{eq-target}. In order to obtain positive local uniform lower bounds, it is crucial to have similar versions of Corollary \ref{cor-general-1} in other ranges of $\alpha$ and $n$. \\

Thus, we are going to show similar  estimates given in Section \ref{subsec-general-2}. First in the case (2), we have: 
\begin{equation}\label{eq-general-5}\begin{aligned}
	\Psi _t - \D \cdot(\fr{\D \Psi}{n^{1-\alpha}\Psi^\alpha}&)=-\left[2n\fr{(T-t)}{T} \fr{R^2e^{\fr{4nt}{T}}}{|x|^2+R^2e^{\fr{4nt}{T}}}+\fr{1}{\fr{t}{T}+1} \right]\fr{\Psi}{\alpha T} \\  \quad & <-\fr{1}{2{\alpha T}} {\Psi}<0
	\end{aligned}
	\end{equation}
	on $0<t<T$. Moreover, 
	\begin{equation}
	\begin{aligned}
|A_i|&\le c_0\fr{\Psi}{T}\left(|x|^2+R^2e^{\fr{4nt}{T}}\right)^\fr{1}{2}, &&|\partial_tA_i|\le c_0\fr{\Psi}{T^2}\left(|x|^2+R^2e^{\fr{4nt}{T}}\right)^\fr{1}{2},\\
|B_i|&\le c_0\fr{\Psi}{T^2}\left(|x|^2+R^2e^{\fr{4nt}{T}}\right), &&|D_{v_i}B_i|\le c_0\fr{\Psi}{T^2}\left(|x|^2+R^2e^{\fr{4nt}{T}}\right)^\fr{1}{2},\\
|\partial_t B_i| &\le  c_0\fr{\Psi}{T^3}\left(|x|^2+R^2e^{\fr{4nt}{T}}\right),&& |\Psi^\alpha A_i|\le c_0 \Psi\left(|x|^2+R^2e^{\fr{4nt}{T}}\right)^{-\fr{1}{2}},\\
\text{and }&\quad |\Psi^\alpha B_i|\le c_0 \fr{\Psi}{T}  
\end{aligned}
\end{equation} on $0<t<T$ where $c_0=c_0(n,\alpha)$.
\begin{corollary}\label{cor-general2-1}
({\it c.f. Corollary \ref{cor-general-1}}) Suppose  $\alpha=\fr{2}{n}$ and $n\ge2$. For all $\e>0$ and $\Psi_{R,T}$ of Definition \ref{def-Psi}, there exists a family $\{{\overline u_i^\e}\}_{i=1}^{2n}$ which is a subsolution of \eqref{eq-main} satisfying $\fr{1}{4n}\Psi \le \overline{ u_i^\e} \le \fr{3}{4n}\Psi$ on $\left(|x|^2+R^2e^{\fr{4nt}{T}}\right)^\fr{1}{2}<\fr{c}{\e}T$ and $t\in[0,T)$. The constant $c$ depends on $n$ and $\alpha$.
\begin{proof}This is just a reproduction of Proposition \ref{prop-general-1} and \ref{prop-general-2}. Every error and remainder estimate in those propositions could be rewritten by replacing $\fr{(|x|^2+R^2)^\fr{1}{2}}{T-ct}$ with $\fr{(|x|^2+R^2e^{\fr{4nt}{T}})}{T}$.
\end{proof}
\end{corollary}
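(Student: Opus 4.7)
The plan is to mirror the two-step construction of Proposition \ref{prop-general-1} and Proposition \ref{prop-general-2} verbatim, with the critical-case profile $\Psi_{R,T}$ from Definition \ref{def-Psi}(2) and the associated length scale $(|x|^2+R^2e^{\fr{4nt}{T}})^{\fr{1}{2}}/T$ replacing the supercritical scale $(|x|^2+R^2)^{\fr{1}{2}}/(T-ct)$. Concretely, I set
\[
\overline{u}_i^\e = \fr{1}{2n}(\Psi + A_i \e + B_i \e^2),
\]
where $A_i$ and $B_i$ are the functions associated to $\rho=\Psi$ by the formulas \eqref{eq-AB}. With this choice, the $\fr{1}{\e}$-term and the $O(1)$-term in the expansion \eqref{eq-expansion} collapse by design into $\fr{1}{2n}\big[\Psi_t - \D\cdot(\fr{1}{n^{1-\alpha}\Psi^\alpha}\D\Psi)\big]$, so the only work left is a quantitative sign analysis of this leading piece plus a bound on the $\e$-, $\e^2$-, and $\e^3$-order corrections.

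The first step is to produce the negative leading term with a usable margin. In the critical case, estimate \eqref{eq-general-5} gives $\Psi_t - \D\cdot(\fr{1}{n^{1-\alpha}\Psi^\alpha}\D\Psi) \le -\fr{1}{2\alpha T}\Psi$, which plays exactly the role that $-c_1\,\Psi/(T-ct)$ played in Proposition \ref{prop-general-1}, with $T$ replacing $T-ct$ throughout. The second step is to control the correction terms in \eqref{eq-expansion} together with the Taylor remainder from Lemma \ref{lem-taylor}. Here I would use the critical-case estimates on $|A_i|$, $|B_i|$ and their derivatives listed immediately after \eqref{eq-general-5}; by inspection these have the same structure as \eqref{eq-general-4-2}, only with $(|x|^2+R^2)^{\fr{1}{2}}/(T-ct)$ replaced by $(|x|^2+R^2e^{\fr{4nt}{T}})^{\fr{1}{2}}/T$. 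Consequently every correction factors as a positive power of $\e(|x|^2+R^2e^{\fr{4nt}{T}})^{\fr{1}{2}}/T$ times the good quantity $\Psi/T$, and restricting to the region where $\e(|x|^2+R^2e^{\fr{4nt}{T}})^{\fr{1}{2}}/T < c$ with $c=c(n,\alpha)$ small enough absorbs the corrections into the negative leading term, establishing the subsolution property.

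For the pointwise bound $\fr{1}{4n}\Psi \le \overline{u}_i^\e \le \fr{3}{4n}\Psi$ the same estimates show that $|A_i|\e + |B_i|\e^2$ is dominated by $\Psi$ times a positive power of $\e(|x|^2+R^2e^{\fr{4nt}{T}})^{\fr{1}{2}}/T$, so a further shrinking of $c$ forces $|A_i|\e+|B_i|\e^2 \le \fr{1}{4}\Psi$ on the admissible region and the two-sided bound follows. The main obstacle, exactly as in the supercritical proof, is that the Taylor remainder $R_{a,b}(\e)$ is only usable when $a\e$ and $b\e^2$ are themselves small, which is precisely what forces the smallness of $\e(|x|^2+R^2e^{\fr{4nt}{T}})^{\fr{1}{2}}/T$; choosing a single $c$ that simultaneously controls $R_{a,b}(\e)$ via \eqref{eq-remainderestimate}, kills the signed sum of error terms, and yields the sandwich is the delicate bookkeeping step, already executed in Section \ref{subsec-general-2} and repeating here with the obvious substitution.
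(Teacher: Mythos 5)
Your proposal is correct and follows essentially the same route as the paper: the paper's proof is precisely a reproduction of Propositions \ref{prop-general-1} and \ref{prop-general-2} with the ansatz \eqref{eq-general-0-1} built from the critical-case $\Psi_{R,T}$, using \eqref{eq-general-5} for the strictly negative leading term and the accompanying $A_i$, $B_i$ estimates, with the scale $\fr{(|x|^2+R^2e^{4nt/T})^{1/2}}{T}$ replacing $\fr{(|x|^2+R^2)^{1/2}}{T-ct}$ and the smallness of $\e(|x|^2+R^2e^{4nt/T})^{1/2}/T$ absorbing the $\e$-, $\e^2$-, $\e^3$-order corrections and the Taylor remainder. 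Your handling of the sandwich bound by further shrinking $c$ likewise matches the paper's Proposition \ref{prop-general-2} step.
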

Next, in the case (3), similar argument as in case (2) gives us the following: 
\begin{equation}\label{eq-general-6}
\begin{aligned}
\Psi _t - \D \cdot(\fr{\D \Psi}{n^{1-\alpha}\Psi^\alpha}&)=-\left[\fr{2}{2-n\alpha}\fr{R^2(t+T)^\fr{4}{2-n\alpha}}{|x|^2+R^2(t+T)^{\fr{4}{2-n\alpha}}}\fr{T-t}{T+t}+1 \right]\fr{\Psi}{T\alpha} \\  \quad & <-\fr{\Psi}{T\alpha}<0
\end{aligned}
\end{equation}
	on $0<t<T$. Moreover,
	\begin{equation}
	\begin{aligned}
|A_i|&\le c_0\fr{\Psi}{T}\left(|x|^2+R^2(t+T)^\fr{4}{2-n\alpha}\right)^\fr{1}{2}, &&|\partial_tA_i|\le c_0\fr{\Psi}{T^2}\left(|x|^2+R^2(t+T)^\fr{4}{2-n\alpha}\right)^\fr{1}{2},\\
|B_i|&\le c_0\fr{\Psi}{T^2}\left(|x|^2+R^2(t+T)^\fr{4}{2-n\alpha}\right), &&|D_{v_i}B_i|\le c_0\fr{\Psi}{T^2}\left(|x|^2+R^2(t+T)^\fr{4}{2-n\alpha}\right)^\fr{1}{2},\\
|\partial_t B_i| &\le  c_0\fr{\Psi}{T^3}\left(|x|^2+R^2(t+T)^\fr{4}{2-n\alpha}\right),&& |\Psi^\alpha A_i|\le c_0 \Psi\left(|x|^2+R^2(t+T)^\fr{4}{2-n\alpha}\right)^{-\fr{1}{2}},\\
\text{and }&\quad |\Psi^\alpha B_i|\le c_0 \fr{\Psi}{T}  &&
\end{aligned}
\end{equation} 
on $0<t<T$ where $c_0=c_0(n,\alpha)$.
\begin{corollary}\label{cor-general2-2}
Suppose  $\alpha\in(0,\fr{2}{n})$ and $n\ge2$. For all $\e>0$ and $\Psi_{R,T}$ of Definition \ref{def-Psi}, there exists a family $\{{\overline u_i^\e}\}_{i=1}^{2n}$ which is a subsolution of \eqref{eq-main} satisfying $\fr{1}{4n}\Psi \le \overline{ u_i^\e} \le \fr{3}{4n}\Psi$ on $\left(|x|^2+R^2(t+T)^{\fr{4}{2-n\alpha}}\right)<\fr{c}{\e}T$ and $t\in[0,T)$. The constant $c$ depends on $n$ and $\alpha$.
\begin{proof}It can be proved same as Corollary \ref{cor-general2-1}.
\end{proof}
\end{corollary}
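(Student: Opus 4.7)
The plan is to reproduce, verbatim in spirit, the argument of Proposition \ref{prop-general-1}--\ref{prop-general-2} (i.e.\ Corollary \ref{cor-general-1}) in the present parameter regime, simply with the time weight $T-ct$ replaced by $T$ and the spatial weight $|x|^2+R^2$ replaced by $|x|^2+R^2(t+T)^{4/(2-n\alpha)}$. Concretely, I would set
$$
\overline{u_i^\e}=\fr{1}{2n}\bigl(\Psi + A_i\e + B_i\e^2\bigr),\qquad i=1,\ldots,2n,
$$
with $\Psi=\Psi_{R,T}$ from Definition \ref{def-Psi}(3) and with $A_i$, $B_i$ given by the formulas \eqref{eq-AB} applied to $\Psi$. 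The derivation leading to \eqref{eq-general-0-2}--\eqref{eq-general-0-3} then automatically cancels the $O(\e^{-1})$ contribution and the $O(\e^{0})$ contribution on the LHS of \eqref{eq-expansion} collapses to $\fr{1}{2n}$ times the PDE residual $\Psi_t-\D\cdot(\fr{1}{n^{1-\alpha}\Psi^\alpha}\D\Psi)$, which by \eqref{eq-general-6} is bounded above by $-\fr{\Psi}{2nT\alpha}<0$.

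The key scaling identity analogous to \eqref{eq-relation} in this regime is
$$
\fr{\Psi}{T}=\fr{C_\alpha^{\alpha}\,\Psi^{1-\alpha}}{|x|^2+R^2(t+T)^{4/(2-n\alpha)}},
$$
which rewrites the zeroth-order term as $-c_1\Psi/T$ for some $c_1=c_1(n,\alpha)>0$. Combining this with the pointwise bounds already displayed for $|A_i|,|\p_tA_i|,|B_i|,|D_{v_i}B_i|,|\p_tB_i|,|\Psi^\alpha A_i|,|\Psi^\alpha B_i|$, every higher-order term in \eqref{eq-expansion} is controlled, up to constants depending only on $n$ and $\alpha$, by $\fr{\Psi}{T}$ times a positive power of the dimensionless quantity
$$
\eta(x,t):=\fr{\e\bigl(|x|^2+R^2(t+T)^{4/(2-n\alpha)}\bigr)^{1/2}}{T}.
$$
The remainder estimate \eqref{eq-remainderestimate} carries over verbatim with $a,b$ redefined in terms of the new $A_i,B_i$, and the $\e^2$ and $\e^3$ contributions are then dominated by $\fr{\Psi}{T}$ times a polynomial in $\eta$ with no constant term. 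Restricting to the region $\eta<c$ for $c=c(n,\alpha)>0$ sufficiently small therefore yields
$$
\text{LHS of }\eqref{eq-expansion}\le\bigl[-c_1+O(c)+O(c^2)+O(c^3)+O(c^4)\bigr]\fr{\Psi}{T}<0,
$$
so $\{\overline{u_i^\e}\}$ is a subsolution of \eqref{eq-main} in $\{\eta<c\}\cap\{0\le t<T\}$.

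The two-sided bound $\fr{1}{4n}\Psi\le\overline{u_i^\e}\le\fr{3}{4n}\Psi$ is then immediate from $|A_i|\e+|B_i|\e^2\lesssim \Psi\,\eta+\Psi\,\eta^2$, after possibly shrinking $c$, exactly as in Proposition \ref{prop-general-2}. The main obstacle — really the only nontrivial bookkeeping — is verifying the displayed scaling identity above and checking that each of the listed pointwise estimates for $A_i,B_i$ and their derivatives is consistent with the mixed appearance of $T$ (in the denominators) and $(t+T)^{4/(2-n\alpha)}$ (in the spatial weight); once this is in place the choice of $c$ and the conclusion are a direct transcription of the earlier proofs, and the constant in the statement is precisely this $c$.
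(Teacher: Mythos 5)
Your proposal is correct and takes essentially the same route as the paper, which proves this corollary by repeating Propositions \ref{prop-general-1} and \ref{prop-general-2} with the quantity $\fr{(|x|^2+R^2)^{1/2}}{T-ct}$ replaced by $\fr{\left(|x|^2+R^2(t+T)^{4/(2-n\alpha)}\right)^{1/2}}{T}$, using the strict subsolution property \eqref{eq-general-6} for the zeroth-order term and the displayed pointwise bounds on $A_i,B_i$ for the higher-order ones. Your dimensionless quantity $\eta$ uses the square root of the spatial weight, which is the scaling actually consistent with those bounds (the absence of the square root in the corollary's stated region appears to be a typo in the paper).
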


In the case (4), we have the same kinds but slightly different estimates:
\begin{equation}\label{eq-general-7}
\begin{aligned}
\Psi _t - \D \cdot(\fr{\D \Psi}{n^{1-\alpha}\Psi^\alpha}&)=-\left[\fr{\hat{c}(\hat{c}-1)}{2}\fr{|x|^2}{(t+T)^2} + (\overline{c}-\hat{c}) \fr{1}{t+T}\right]\fr{n\Psi}{2} \\  \quad & <-c_1\left[ \fr{1}{t+T}+\fr{|x|^2}{(t+T)^2}\right]\Psi<0
\end{aligned}
\end{equation}
	on $0<t<\infty$ and $c_1=c_1(c,\overline{c})=c_1(n,\alpha)$ when $\hat c$ and $\overline{c}$ are fixed. Moreover, we have
\begin{equation}\label{eq-general-8}
	\begin{aligned}
|A_i|&\le c_0\Psi\left( \fr{|x|}{t+T} \right), &&\\
|\partial_tA_i|&\le c_0\Psi\left(\fr{|x|}{(t+T)^2}+ \fr{|x|^3}{(t+T)^3}\right) = c_0 \left[ \fr{1}{t+T}+\fr{|x|^2}{(t+T)^2}\right]\Psi \fr{|x|}{t+T},\\
|B_i|&\le c_0\Psi\left(\fr{1}{t+T}+ \fr{|x|^2}{(t+T)^2}\right), \\
|D_{v_i}B_i|&\le c_0\Psi\left(\fr{|x|}{(t+T)^2}+ \fr{|x|^3}{(t+T)^3}\right)= c_0 \left[ \fr{1}{t+T}+\fr{|x|^2}{(t+T)^2}\right]\Psi \fr{|x|}{t+T},\\
\text{and } &\quad  |\partial_t B_i| \le  c_0\Psi\left(\fr{1}{(t+T)^2}+ \fr{|x|^2}{(t+T)^3}+\fr{|x|^4}{(t+T)^4}\right)
\end{aligned}
\end{equation} on $0<t<\infty$ where $c_0=c_0(n,\alpha)$.

Therefore following similar argument of Proposition \ref{prop-general-1} and \ref{prop-general-2}, we obtain a version of Corollary \ref{cor-general-1} in the case (4). 

\begin{corollary}\label{cor-general2-3}
Suppose  $\alpha =0$ and $n\ge2$. There exists $\e_0(n,\alpha,T)$ such that for all $0<\e<\e_0(n,\alpha,T)>0$ and $\Psi_{R,T}$ of Definition \ref{def-Psi}, there exists a family $\{{\overline u_i^\e}\}_{i=1}^{2n}$ which is a subsolution of \eqref{eq-main} satisfying $\fr{1}{4n}\Psi \le \overline{ u_i^\e} \le \fr{3}{4n}\Psi$ on ${|x|}<\fr{c}{\e}(t+T)$ and $t\in[0,\infty)$. the constant $c$ depends on $n$ and $\alpha$.
\begin{proof}
It comes from the same line of the argument as Corollaries above, but we need an extra dependence of $\e$ with respect to $T$ since we need $\fr{c_0}{t+T}$ and $\fr{c_0}{(t+T)^2}$ to be small in the proof. The proof is much simpler except for slightly different estimates \eqref{eq-general-8}. Actually when $\alpha=0$ in \eqref{eq-expansion}, the remainder term $R$ and the other terms having $\alpha$ or $\binom \alpha2$ coefficients are zero. 
\end{proof}
\end{corollary}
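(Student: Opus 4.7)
The plan is to repeat the strategy of Propositions \ref{prop-general-1}--\ref{prop-general-2} and Corollary \ref{cor-general-1}, exploiting the simplifications that occur at $\alpha=0$. I would set
$$\overline{u}_i^\e := \fr{1}{2n}(\Psi + A_i\e + B_i\e^2)$$
with $\Psi=\Psi_{R,T}$ the Gaussian profile of Definition \ref{def-Psi}(4) and $A_i,B_i$ defined from \eqref{eq-AB}. The key simplification is that $(u_i+u_j)^\alpha\equiv 1$ when $\alpha=0$, so Lemma \ref{lem-taylor} is exact with remainder $R\equiv 0$, and every term in the expansion \eqref{eq-expansion} carrying a factor of $\alpha$ or $\binom{\alpha}{2}$ vanishes identically. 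What survives on the LHS of \eqref{eq-expansion} is only a $0$-th order piece $\fr{1}{2n}[\Psi_t-\D\cdot(\fr{1}{n}\D\Psi)]$, a single first-order piece $\fr{\e}{2n}[\partial_t A_i+D_{v_i}B_i]$, and a single second-order piece $\fr{\e^2}{2n}\partial_t B_i$.

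Next I would combine the strict negativity bound \eqref{eq-general-7} for the $0$-th order piece with the error estimates \eqref{eq-general-8} for the $O(\e)$ and $O(\e^2)$ pieces. In the region $|x|<\fr{c}{\e}(t+T)$ the dimensionless quantity $\fr{\e|x|}{t+T}$ is bounded by $c$, so the error terms in \eqref{eq-general-8} that carry a positive power of $|x|$ are absorbed by the leading $-\fr{c_1}{2n}[\fr{1}{t+T}+\fr{|x|^2}{(t+T)^2}]\Psi$ once $c=c(n,\alpha)$ is chosen small. The residual $|x|$-free contributions coming from $B_i$ and $\partial_t B_i$ take the form of a constant times $\fr{\e^2}{t+T}\Psi$ or $\fr{\e^2}{(t+T)^2}\Psi$; these cannot be killed by shrinking the spatial window, so I would absorb them by further restricting $\e<\e_0(n,\alpha,T)$ with $\e_0$ chosen so that $\e_0^2/T$ is sufficiently small relative to $c_1$. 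Once the sign of the LHS of \eqref{eq-expansion} is established, the sandwich bound $\fr{\Psi}{4n}\le\overline{u}_i^\e\le\fr{3\Psi}{4n}$ follows by applying the same estimates \eqref{eq-general-8} to $|A_i\e+B_i\e^2|$, exactly as in Proposition \ref{prop-general-2}.

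The main obstacle, which the corollary's statement already signals, is precisely the $T$-dependence of $\e_0$ that was absent in Corollaries \ref{cor-general-1}--\ref{cor-general2-2}. In those cases the spatial decay of $\Psi$ provides a weight $(|x|^2+\cdots)^{1/2}$ matching every derivative of $A_i$ and $B_i$, so all errors fit inside one dimensionless quantity of the form $\fr{\e(|x|^2+\cdots)^{1/2}}{T-ct}$. In the Gaussian case the estimates \eqref{eq-general-8} contain genuine $|x|$-free residuals such as $\fr{1}{t+T}$ and $\fr{1}{(t+T)^2}$, and dominating these in a subsolution inequality forces $\e$ to be small relative to $T$. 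Verifying that an $\e_0$ of the form $c(n,\alpha)\sqrt{T}$ (or, more conservatively, $c(n,\alpha)T$) suffices is the only part of the bookkeeping that genuinely departs from the earlier corollaries.
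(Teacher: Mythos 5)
Your proposal is correct and follows essentially the same route as the paper: the ansatz \eqref{eq-general-0-1} with $\Psi$ from Definition \ref{def-Psi}(4), the observation that at $\alpha=0$ the remainder $R$ and all terms carrying $\alpha$ or $\binom{\alpha}{2}$ vanish so only the zeroth-, first-, and second-order pieces survive, absorption of the $|x|$-weighted errors from \eqref{eq-general-8} into the strict negativity \eqref{eq-general-7} on $|x|<\fr{c}{\e}(t+T)$, and the extra smallness condition $\e<\e_0(n,\alpha,T)$ forced by the $|x|$-free residuals involving $\fr{1}{t+T}$ and $\fr{1}{(t+T)^2}$. In fact your write-up makes explicit the bookkeeping that the paper's one-line proof only gestures at.
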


Finally in the case (5), since those expressions we are going to use involve sign operator, the derivatives are not classical, but they are given in weak notion.  $\Psi$ doesn't have weak second derivatives because second derivatives blow up at the boundary of the support. However, $\Psi^{1-\alpha}$ has second derivatives. In fact, we can check that 
\begin{equation}\begin{aligned}\partial_i \Psi^{1-\alpha} &= (C_\alpha)^{1-\alpha} \partial_i \left| \left(\fr{R^2T^\fr{2}{2-n\alpha}-|x|^2}{t}\right)\right|^\fr{1-\alpha}{-\alpha} _+\\&=(C_\alpha)^{1-\alpha} \fr{1-\alpha}{-\alpha} \left(\fr{R^2T^\fr{2}{2-n\alpha}-|x|^2}{t}\right)^{-\fr{1}{\alpha}} sgn^+\left(\fr{R^2T^\fr{2}{2-n\alpha}-|x|^2}{t}\right)2x_i\\ &=\fr{(1-\alpha)(C_\alpha)^{1-\alpha}x_i}{-\alpha}\left| \left(\fr{R^2T^\fr{2}{2-n\alpha}-|x|^2}{t}\right)\right|^{-\fr{1}{\alpha}} _+\end{aligned}\end{equation} and the last expression has first order weak derivatives. Thus, expressions like $\fr{\D \Psi}{\Psi^\alpha}$ need to be interpreted as $\D \fr{\Psi^{1-\alpha}}{1-\alpha}$.

From this we can do actual computations and get the following estimate: 
\begin{equation}\label{eq-general-9}
\begin{aligned}
\Psi _t - \D \cdot\left(\fr{\D \Psi}{n^{1-\alpha}\Psi^\alpha}\right)&=\fr{1}{\alpha}\fr{2}{2-n\alpha}\left[\fr{R^2T^\fr{2}{2-n\alpha} }{R^2T^\fr{2}{2-n\alpha}-|x|^2}\right]_+\fr{\Psi}{t+T} \\  \quad & \le\fr{1}{\alpha}\fr{1}{2-n\alpha}\left[\fr{R^2T^\fr{2}{2-n\alpha}+|x|^2 }{R^2T^\fr{2}{2-n\alpha}-|x|^2}\right]_+\fr{\Psi}{t+T}.
\end{aligned}
\end{equation}

In addition, we can check	\begin{equation}
	\begin{aligned}
|A_i|&\le c_0\fr{\Psi}{(t+T)}\left(|x|^2+R^2T^\fr{2}{2-n\alpha}\right)^\fr{1}{2}, &&|\partial_tA_i|\le c_0\fr{\Psi}{(t+T)^2}\left(|x|^2+R^2T^\fr{2}{2-n\alpha}\right)^\fr{1}{2}, \\
|B_i|&\le c_0\fr{\Psi}{(t+T)^2}\left(|x|^2+R^2T^\fr{2}{2-n\alpha}\right),  &&|D_{v_i}B_i|\le c_0\fr{\Psi}{(t+T)^2}\fr{\left(|x|^2+R^2T^\fr{2}{2-n\alpha}\right)^\fr{3}{2}}{\left(R^2T^\fr{2}{2-n\alpha}-|x|^2\right)_+},\\
|\partial_t B_i| &\le c_0\fr{\Psi}{(t+T)^3}\left(|x|^2+R^2T^\fr{2}{2-n\alpha}\right)\le&& c_0\fr{\Psi}{(t+T)^3}\fr{\left(|x|^2+R^2T^\fr{2}{2-n\alpha}\right)^2}{\left(R^2T^\fr{2}{2-n\alpha}-|x|^2\right)_+},\\ |\Psi^\alpha A_i|&\le c_0 \Psi\fr{\left(|x|^2+R^2T^\fr{2}{2-n\alpha}\right)^\fr{1}{2}}{\left(R^2T^\fr{2}{2-n\alpha}-|x|^2\right)_+},\text{ \quad and }&&|\Psi^\alpha B_i|\le c_0 \fr{\Psi}{t+T}\fr{\left(|x|^2+R^2T^\fr{2}{2-n\alpha}\right)}{\left(R^2T^\fr{2}{2-n\alpha}-|x|^2\right)_+}
\end{aligned}
\end{equation} 
for some $c_0=c_0(n,\alpha)$. \\

Now we can formulate our last corollary in this case (5). 
\begin{corollary}\label{cor-general2-4}
Suppose  $\alpha \in [-1,0)$ and $n\ge2$. Then, there exist $\e_0(n,\alpha)>0$ such that for all $0<\e<\e_0(n,\alpha)$ and $\Psi_{R,T}$ of Definition \ref{def-Psi}, there exists a family $\{{\overline u_i^\e}\}_{i=1}^{2n}$ which is a subsolution of \eqref{eq-main} satisfying $\fr{1}{4n}\Psi \le \overline{ u_i^\e} \le \fr{3}{4n}\Psi$ on $\left(|x|^2+R^2T^\fr{2}{2-n\alpha}\right)^\fr{1}{2}<\fr{c}{\e}(t+T)$ and $t\in[0,\infty)$. The constant $c$ depends on $n$ and $\alpha$.

\begin{proof}
It can be proved same as corollaries above.\end{proof}
\end{corollary}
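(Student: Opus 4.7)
The plan is to mimic Propositions \ref{prop-general-1}--\ref{prop-general-2} from the $0<1-\alpha<\fr{n-2}{n}$ case and construct the family via the asymptotic ansatz
\[
\bar u_i^\e \;=\; \fr{1}{2n}\bigl(\Psi + A_i\e + B_i\e^2\bigr),\qquad i=1,\ldots,2n,
\]
where $\Psi=\Psi_{R,T}$ is the profile in Definition \ref{def-Psi}(5) and $A_i$, $B_i$ are given by \eqref{eq-AB} with $\rho$ replaced by $\Psi$ on the open support $\{(x,t):|x|^2<R^2T^{2/(2-n\alpha)}\}$, extended by $0$ outside. Inserting the ansatz into \eqref{eq-main} produces the expansion \eqref{eq-expansion}; by the definitions of $A_i$ and $B_i$ the $\e^{-1}$-order contribution cancels, and the $\e^0$-order contribution equals $\fr{1}{2n}\bigl(\Psi_t-\D\cdot(\fr{\D\Psi}{n^{1-\alpha}\Psi^\alpha})\bigr)$, which by \eqref{eq-general-9} is strictly negative and carries the singular prefactor $\fr{\Psi}{t+T}\cdot\fr{R^2T^{2/(2-n\alpha)}+|x|^2}{(R^2T^{2/(2-n\alpha)}-|x|^2)_+}$.

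Next I would show that on the region $\{(|x|^2+R^2T^{2/(2-n\alpha)})^{1/2}<\fr{c}{\e}(t+T)\}$ every remaining $O(\e^k)$ residual in \eqref{eq-expansion} is dominated by this negative principal term. Using the pointwise bounds on $|A_i|,\,|\partial_t A_i|,\,|B_i|,\,|D_{v_i}B_i|,\,|\partial_t B_i|,\,|\Psi^\alpha A_i|,\,|\Psi^\alpha B_i|$ collected just before the statement, each ratio of a residual term to the principal term is majorized by a fixed polynomial in the dimensionless quantity $\sigma:=\fr{(|x|^2+R^2T^{2/(2-n\alpha)})^{1/2}\e}{t+T}$. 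Crucially, the same singular factor $(R^2T^{2/(2-n\alpha)}-|x|^2)_+^{-1}$ that appears in $|\Psi^\alpha A_i|$ and $|\Psi^\alpha B_i|$ also multiplies the principal term in \eqref{eq-general-9}, so it drops out of the ratio. The Taylor remainder $R_{a,b}(\e)$ of Lemma \ref{lem-taylor} is then estimated as in \eqref{eq-remainderestimate} once $a,b$ are verified to be small via the same bounds, and choosing $c$ and $\e_0$ depending only on $n,\alpha$ makes the total polynomial in $\sigma$ strictly less than $1$ on the region.

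The pointwise bound $\fr{1}{4n}\Psi\le\bar u_i^\e\le\fr{3}{4n}\Psi$ on the same region follows from the estimate $|A_i|\e+|B_i|\e^2\le C\Psi\,\sigma(1+\sigma)$, which is at most $\fr{1}{2}\Psi$ after possibly shrinking $c$ one more time; in particular $\bar u_i^\e$ is automatically non-negative on the support of $\Psi$, and outside the support we extend by $0$.

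The main obstacle is that, in contrast to cases (1)--(4) of Definition \ref{def-Psi}, the profile $\Psi$ is now compactly supported in $x$, and its classical derivatives $\partial_i\Psi$, $\partial_i^2\Psi$ blow up at the free boundary $|x|^2=R^2T^{2/(2-n\alpha)}$. The quantities $A_i$ and $B_i$ are only well-defined pointwise in the interior, and one must interpret $\D\cdot(\D\Psi/\Psi^\alpha)$ via the identity $\D\cdot(\D\Psi/\Psi^\alpha)=\D\cdot\D\bigl(\Psi^{1-\alpha}/(1-\alpha)\bigr)$ in the weak sense introduced just before \eqref{eq-general-9}. The delicate part is verifying that the matching singular rates on both sides of the inequality cancel to the same order, and that across the free boundary the jump in $\bar u_i^\e$ carries the correct sign of directional flux so that Theorem \ref{thm-MP}(i) applies; the finite speed of propagation $\fr{1}{\e}$ of the Carleman system then guarantees that gluing with the zero extension produces a genuine distributional subsolution.
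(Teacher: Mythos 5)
Your proposal is essentially the paper's own argument: the paper's proof of this corollary is literally ``same as the corollaries above,'' i.e.\ repeat Propositions \ref{prop-general-1} and \ref{prop-general-2} with the strict-subsolution inequality \eqref{eq-general-9} and the listed bounds on $A_i,B_i,\Psi^\alpha A_i,\Psi^\alpha B_i$, exactly as you do, with the key observation that the single power of $\left(R^2T^{\fr{2}{2-n\alpha}}-|x|^2\right)_+^{-1}$ in the error bounds is matched by the principal negative term so that every residual is a power of $\sigma$ times it. One small correction to your free-boundary discussion: since $\Psi$, $A_i$ and $B_i$ all vanish continuously at $|x|^2=R^2T^{\fr{2}{2-n\alpha}}$, the zero extension has no jump and the distributional subsolution inequality across the interface follows from continuity of the glued piecewise-$W^{1,1}$ function under the first-order transport operator; no flux-sign argument or appeal to Theorem \ref{thm-MP}(i) is needed there (the comparison theorem enters only later, in Proposition \ref{prop-general2-1}).
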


Now we are ready to prove the proposition for positive lower bounds.
\begin{prop}\label{prop-general2-1}
For $n\ge2$ and $|\alpha|\le 1$, suppose mild solution, $u_i^\e$, of \eqref{eq-main}    has initial data $g_i \in L_{loc}^1(\R^n)$ with lower bound $g_i\ge \fr{3}{4n}\Psi_{R,T}$ for some $R$ and $T>0$. Then there exists universal constants $C=C(n,\alpha)>0$ such that for every space-time compact set $K\subset \R^n\times[0,CT)$, there exist $\e_K>0$ satisfying $u_i^\e \ge \fr{1}{4n}\Psi_{R,T}$ on K for $0<\e<\e_K$.

\begin{proof}
We first consider case $(1)$ of Definition \ref{def-Psi}.
Let us choose $C=\fr{1}{2} min\left(\overline{c},\fr{1}{c}\right)$ using constants in Corollary \ref{cor-general-1}. Then, the constructed explicit local subsolution $\overline{u_i^\e}$ in Proposition \ref{prop-general-2} is a subsolution of \eqref{eq-main} satisfying $\fr{1}{4n}{\Psi} \le \overline{u_i^\e} \le \fr{3}{4n}{\Psi}$ on \begin{equation}\label{eq-domain}K_1^\e=\{(x,t)\in Q\ |\ \e(|x|^2+R^2)^\fr{1}{2}\le CT\text{ and }t\in[0,CT]\}.\end{equation}For any $K=Q_{r,t_0}$ with $0<t_0<CT$, there exists $\e_K>0$ such that$$\fr{t_0}{\e}+r < \left(\left(\fr{CT}{\e}\right)^2-R^2\right)^\fr{1}{2}\text{ if }0<\e < \e_K.$$
This implies $Q_{r+\fr{t_0}{\e},t_0} \subset K_1^\e$, and now we can apply comparison principle of Theorem \ref{thm-MP} $(ii)$ between $\overline{u_i^\e}$ and $u_i^\e$. Then we have $u_i^\e\ge \fr{1}{4n} \Psi$ on $K=Q_{r,t_0}$.\\

The other cases of Definition \ref{def-Psi} could be proved in similar ways. As \eqref{eq-domain}, we define 

\begin{equation}
K_1^\e =\begin{cases} \{(x,t)\in Q\ |\ \left(|x|^2+R^2e^{\fr{4nt}{T}}\right)^\fr{1}{2}<\fr{c}{\e}T\text{ and }t\in[0,T])   & \text{in case } (2)\\   
\{(x,t)\in Q\ |\ \left(|x|^2+R^2(t+T)^{\fr{4}{2-n\alpha}}\right)<\fr{c}{\e}T\text{ and }t\in[0,T]) & \text{in case } (3)\\
\{(x,t)\in Q\ |\ |x|<\fr{c}{\e}(t+T)\text{ and }t\in[0,\infty)) & \text{in case } (4)\\ 
\{(x,t)\in Q\ |\ \left(|x|^2+R^2T^\fr{2}{2-n\alpha}\right)^\fr{1}{2}<\fr{c}{\e}(t+T)\text{ and }t\in[0,\infty)) & \text{in case } (6)\end{cases}
\end{equation}
where each constant $c$ comes from Corollary \ref{cor-general2-1}, \ref{cor-general2-2}, \ref{cor-general2-3}, and \ref{cor-general2-4} respectively.
Let us choose $C=min(c,1)>0$, and then it is easy to check that for any $K=Q_{r,t_0}$ with $0<t_0<CT$ there exist samll $e_K$ such that $Q_{r+\fr{t_0}{\e},t_0} \subset K_1^\e$ for $\e<\e_K$. Applying comparison principle of Theorem \ref{thm-MP} $(ii)$ again, we prove the proposition. 

\end{proof}
\end{prop}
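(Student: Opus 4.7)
The plan is to combine the explicit $\e$-parameterized local subsolutions $\{\overline{u_i^\e}\}$ already built in Corollary~\ref{cor-general-1} (and the analogous Corollaries~\ref{cor-general2-1}--\ref{cor-general2-4} for the remaining ranges of $\alpha$) with the finite-speed-of-propagation form of the comparison principle, Theorem~\ref{thm-MP}(ii). Each corollary supplies a family $\{\overline{u_i^\e}\}$ which is a genuine subsolution of \eqref{eq-main} on a spacetime region $K_1^\e$ that dilates like $1/\e$ in space, and which is trapped between $\tfrac{1}{4n}\Psi_{R,T}$ and $\tfrac{3}{4n}\Psi_{R,T}$ there. The lower trap $\tfrac{1}{4n}\Psi_{R,T}$ is precisely the bound we want to transfer onto $u_i^\e$, so the whole strategy reduces to a propagation-of-inequality argument: verify the initial bound $\overline{u_i^\e}(\cdot,0) \le g_i$ on a sufficiently large ball, then invoke the finite-speed comparison to conclude $u_i^\e \ge \overline{u_i^\e}$ on the compact set~$K$.

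First I would handle case~(1) of Definition~\ref{def-Psi} and then observe that the other four cases are identical up to cosmetic changes. Fix $C = C(n,\alpha) = \tfrac12 \min(\overline c, 1/c)$ with $c,\overline c$ the constants of Corollary~\ref{cor-general-1}, so that
\[
K_1^\e = \bigl\{(x,t)\in Q \ :\ \e(|x|^2+R^2)^{1/2} \le CT,\ t\in[0,CT]\bigr\}
\]
lies inside the region on which $\overline{u_i^\e}$ is subsolutional and satisfies $\tfrac{1}{4n}\Psi \le \overline{u_i^\e} \le \tfrac{3}{4n}\Psi$. At $t=0$, the upper trap gives $\overline{u_i^\e}(x,0) \le \tfrac{3}{4n}\Psi_{R,T}(x,0) \le g_i(x)$ by the hypothesis, wherever $(x,0)\in K_1^\e$. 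Now given $K=Q_{r,t_0}$ with $t_0<CT$, I want $Q_{r+t_0/\e,\,t_0}\subset K_1^\e$; this amounts to $\e(r+t_0/\e)\lesssim CT - \e R$, which holds for all $\e$ smaller than some $\e_K>0$. Applying Theorem~\ref{thm-MP}(ii) to $\overline{u_i^\e}$ and $u_i^\e$ on the cylinder $B(0,r+t_0/\e)\times[0,t_0]$ then gives $u_i^\e \ge \overline{u_i^\e} \ge \tfrac{1}{4n}\Psi_{R,T}$ on $K$.

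For the remaining cases, the only change is the precise shape of $K_1^\e$: in case~(2) it is cut out by $(|x|^2 + R^2 e^{4nt/T})^{1/2} < cT/\e$, in case~(3) by $|x|^2 + R^2(t+T)^{4/(2-n\alpha)} < cT/\e$, in case~(4) by $|x|<c(t+T)/\e$, and in case~(5) by $(|x|^2 + R^2 T^{2/(2-n\alpha)})^{1/2} < c(t+T)/\e$. In each case the spatial extent of $K_1^\e$ at any fixed time $t\in[0,CT)$ grows like $1/\e$, so the inclusion $Q_{r+t_0/\e,\,t_0}\subset K_1^\e$ is again achievable by taking $\e$ small, possibly shrinking the universal constant $C=C(n,\alpha)$ to $\min(c,1)/2$ or similar. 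The comparison step is then identical.

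I do not anticipate a serious obstacle beyond bookkeeping; the genuine work has already been done upstream, in constructing $\overline{u_i^\e}$ via the second-order asymptotic ansatz \eqref{eq-general-0-1}--\eqref{eq-AB} and controlling the $\e$- and $\e^2$-order remainders (Propositions~\ref{prop-general-1}--\ref{prop-general-2} and their analogues). One minor point to be careful about is that $u_i^\e$ is only a \emph{mild} solution in the sense of Definition~\ref{def-mild}, so the comparison with the smooth subsolution $\overline{u_i^\e}$ should be justified by approximating $g_i$ by $L^\infty_{loc}$ data and invoking the $L^1$-contraction property of $k_\alpha$; since the difference $g_i - \overline{u_i^\e}(\cdot,0)$ is controlled pointwise from below on $B(0,r+t_0/\e)$, this approximation passes to the limit cleanly.
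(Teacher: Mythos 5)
Your proposal is correct and follows essentially the same route as the paper: fix $C$ from the constants in Corollary \ref{cor-general-1} (resp.\ Corollaries \ref{cor-general2-1}--\ref{cor-general2-4}), check that $Q_{r+t_0/\e,\,t_0}$ sits inside the region $K_1^\e$ where the explicit subsolution is valid for $\e$ small, and conclude via Theorem \ref{thm-MP}(ii) together with the ordering at $t=0$ coming from $g_i \ge \fr{3}{4n}\Psi_{R,T}$. Your closing remark on justifying the comparison for mild solutions by $L^\infty_{loc}$ approximation and $L^1$-contraction is a sensible addition that the paper leaves implicit.
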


Applying Proposition \ref{prop-general2-1}, we can prove the convergence in diffusive limit under certain general conditions on initial data. As mentioned before, we can divide six cases into two groups depending on whether Barenblatt type solution has finite mass or not. 

\begin{prop}\label{prop-general2-2}
For $n\ge2$ and $|\alpha|\le1$, suppose initial data $g_i$ in \eqref{eq-main} are locally integrable, which mean $g_i\in L^1_{loc}(\R^n)$.  
\begin{enumerate}[(i)]
\item For $\alpha\in[\fr{2}{n},1]$, {\it i.e.} case (1) and (2), if there exist $f_i\in L^\infty$ $i=1,2,\ldots,2n$ satisfying $g_i-f_i\in L^1(\R^n)$ and $f_i\ge \fr{3}{4n}\Psi_{R,T}(0)$ for some $R>0$ and $T>0$, then as $\e\to0$ we have$$\text{ $u_i^\e \rightarrow \fr{\rho}{2n}$ in $L^1_{loc}(\R^n\times(0,CT))$}$$  where $\rho$ is a weak solution of \eqref{eq-fastdiff} with initial data $\sum_i g_i$. Moreover, under the condition \eqref{eq-compactcondition}, we have$$\text{$\rho^\e \rightarrow \rho$ in $C([0,CT),L^1_{loc}(\R^n))$ .}$$ $C=C(n,\alpha)$ is constant obtained in Proposition \ref{prop-general2-1}. 

For $\alpha=1$, if \eqref{eq-fastdiff} has no uniqueness of solution for the initial data, the convergence takes along a subsequence for each given sequence $\e_j\to0$. Otherwise, they convergence arbitrarily as $\e \to0$.
\item For $\alpha\in[-1,\fr{2}{n})$, {\it i.e.} case (4),(5), and (6), if there exist $f_i\in L^\infty$ $i=1,2,\ldots,2n$ satisfying $g_i-f_i\in L^1(\R^n)$, then as $\e\to0$ we have $$\text{ $u_i^\e \rightarrow \fr{\rho}{2n}$ in $L^1_{loc}(\R^n\times(0,\infty))$}$$ where $\rho$ is the unique weak solution of \eqref{eq-fastdiff} with initial data $\sum_i g_i$. Moreover, under the condition \eqref{eq-compactcondition}, we have $$\text{$\rho^\e \rightarrow \rho$ in $C([0,T],L^1_{loc}(\R^n))$ for all $T>0$}$$ where the limit is unique. 

\end{enumerate}

\begin{proof}
For any $n$ and $\alpha$, suppose $g_i\ge \fr{3}{4n}\Psi_{\alpha,n,R,T} (0)$ for some $R$ and $T>0$ and uniformly bounded, say $g_i\le M$. By Proposition \ref{prop-general2-1}, we have locally uniform positive lower bound for small $\e>0$ on $t\in[0,CT)$. As it is pointed, positive local lower and upper bounds are sufficient to prove diffusive limit of \eqref{eq-main}. Thus, we can simply adopt Theorem \ref{thm-dl-1} and  obtain $u_i^\e \rightarrow \fr{\rho}{2n}$ in $L^2_{loc}(\R^n\times(0,CT))$ and $\rho^\e \rightarrow \rho$ in $C([0,CT),L^1_{loc}(\R^n))$ convergence. 

Now in the case $(i)$, $$[\fr{3}{4n}\Psi_{nR,T}(0)-g_i]_+\le [\fr{3}{4n}\Psi_{R,T}(0)-g_i]_+ \le [f_i-g_i]_+ \in L^1$$ and  $$[g_i-(f_i+n)]_+\le[g_i-f_i]_+\in L^1.$$ Thus if we define $$g_{i,n}:=\begin{cases}\begin{aligned} &f_i+n &&\text{if }g_i\ge f_i+n \\&g_i &&\text{if }f_i+n > g_i\ge\fr{3}{4n}\Psi_{nR,T} \\&\fr{3}{4n}\Psi_{nR,T} &&otherwiese,\end{aligned}\end{cases}$$ we have  $g_{i,n}- g_i\to0$ in $L^1$ as $n\to \infty$ by dominated convergence theorem. Suppose $u^\e_{i,n}$ are solutions of \eqref{eq-main} with initial data $g_{i,n}$. Set $\rho_n$ and  $\rho$ to be solutions of \eqref{eq-fastdiff} with initial data $\sum g_{i,n}$ and $\sum g_i$, respectively. From the result above, we have $u_{i,n}^\e \rightarrow \fr{\rho_n}{2n}$ in $L^2_{loc}(\R^n\times(0,CT))$ and $\sum_i u^\e_{i,n} \rightarrow \rho_n$ in $C([0,CT),L^1_{loc}(\R^n))$. Moreover, $L^1$-contraction among solutions of \eqref{eq-main} and solutions of \eqref{eq-fastdiff} says $$\sum_i  \left\Vert u^\e_{i,n}(t)-u^\e_i(t)\right\Vert_1 \le \sum_i \left\Vert g_{i,n}-g_i\right\Vert_1 \to 0 $$ and $$\left\Vert \rho_n(t)-\rho(t)\right\Vert_1\le \big\Vert \sum_i(g_{i,n}-g_i)\big\Vert_1 \to 0$$ as $n\to\infty$. Observations above lead us to prove $(i)$. We can only prove a convergence along some subsequence to some solution $\rho$ of \eqref{eq-fastdiff} in case $\alpha=1$ since there is no uniqueness of solution in that case with this decay assumption. 

Case $(ii)$ is not much different from $(i)$. For any given $T>0$, let us define $$g_{i,n}:=\begin{cases}\begin{aligned}&f_i+n &&\text{if }g_i\ge f_i+n \\&g_i &&\text{if }f_i+n > g_i\ge\fr{3}{4n}\Psi_{n,T} \\&\fr{3}{4n}\Psi_{n,T} &&otherwiese,\end{aligned}\end{cases}$$  then $g_{i,n} - g_i \to 0$ in $L^1$ as $n\to \infty$ as before. By the same line of argument, we get the result.
\end{proof}
\end{prop}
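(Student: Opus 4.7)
The plan is to reduce the proposition to the bounded, two-sided positive case already handled by Theorem \ref{thm-dl-1}, via two successive approximations, using the $L^1$-contraction from Lemma \ref{lem-contraction} to transfer convergence back to the original data.

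First I would treat the model situation where the data $g_i$ are globally bounded above and satisfy $g_i \geq \fr{3}{4n}\Psi_{R,T}(\cdot,0)$ pointwise for the appropriate $\Psi$ of Definition \ref{def-Psi}. The $L^\infty$ upper bound on $u_i^\e$ follows from comparison with a constant supersolution (Theorem \ref{thm-MP}), while Proposition \ref{prop-general2-1} supplies the locally uniform positive lower bound $u_i^\e \geq \fr{1}{4n}\Psi_{R,T}$ on each space-time compact $K \subset \R^n \times [0,CT)$ for all sufficiently small $\e$. These two-sided local positive bounds put us in the setting of Proposition \ref{prop-local} and the machinery of Section \ref{sec-dl}, so Theorem \ref{thm-dl-1} delivers $u_i^\e \to \fr{\rho}{2n}$ in $L^2_{loc}$ (hence $L^1_{loc}$) on $\R^n \times (0,CT)$, with $\rho$ a weak solution of \eqref{eq-fastdiff}.

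Next, for general admissible data I would build the truncations indicated by the author: in case $(i)$ set
\begin{equation}
g_{i,n}(x) = \begin{cases} f_i(x)+n, & g_i(x) \geq f_i(x)+n, \\ g_i(x), & \fr{3}{4n}\Psi_{nR,T}(x,0) \leq g_i(x) < f_i(x)+n, \\ \fr{3}{4n}\Psi_{nR,T}(x,0), & \text{otherwise,} \end{cases}
\end{equation}
and analogously in case $(ii)$ with $\Psi_{n,T}$ in place of $\Psi_{nR,T}$. Each $g_{i,n}$ falls under the model case, producing $u_{i,n}^\e \to \fr{\rho_n}{2n}$ as $\e \to 0$, with $\rho_n$ solving \eqref{eq-fastdiff} for datum $\sum_i g_{i,n}$. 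The pointwise inequalities $[g_{i,n}-g_i]_+ \leq [g_i - f_i - n]_+$ and $[g_i - g_{i,n}]_+ \leq [f_i - g_i]_+$, combined with dominated convergence dominated by $|f_i - g_i|$, give $g_{i,n} \to g_i$ in $L^1$.

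To conclude I would run the $L^1$-contraction of Lemma \ref{lem-contraction} (legal since $k_\alpha$ is T-dissipative by Lemma \ref{Lem-Tdissipativity}) together with the standard $L^1$-contraction for FDE/PME:
\begin{equation}
\sum_i \|u_{i,n}^\e(t) - u_i^\e(t)\|_{L^1} \leq \sum_i \|g_{i,n} - g_i\|_{L^1}, \qquad \|\rho_n(t) - \rho(t)\|_{L^1} \leq \big\|\sum_i (g_{i,n} - g_i)\big\|_{L^1}.
\end{equation}
A triangle inequality then gives $u_i^\e \to \fr{\rho}{2n}$ in $L^1_{loc}$. Under the extra hypothesis \eqref{eq-compactcondition}, the second conclusion of Theorem \ref{thm-dl-1} upgrades each $u_{i,n}^\e$ to $C([0,CT), L^1_{loc})$, and the same contraction estimates propagate this upgrade to $\rho^\e \to \rho$.

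The main obstacle is in case $(i)$ with $\alpha = 1$: the limit logarithmic diffusion equation has no uniqueness in this decay class, so identification of $\rho$ can only be done along a subsequence and the tail $\liminf_{|x|\to\infty}|x|^2 \rho > 0$ inherited from $\Psi_{R,T}$ is needed to pin down a non-degenerate limit. A secondary subtlety is that the $L^1$-contraction for the limit FDE must be invoked in a function class where it is known to hold: in the subcritical range $\fr{2}{n} \leq \alpha < 1$ the decay rate $|x|^{-2/\alpha}$ encoded by $\Psi_{R,T}$ is exactly what places $\rho_n$ and $\rho$ into the standard class where FDE contraction and uniqueness are available, so the whole scheme is internally consistent.
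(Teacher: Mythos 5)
Your proposal is correct and follows essentially the same route as the paper's proof: reduce to the bounded two-sided case via Proposition \ref{prop-general2-1} and Theorem \ref{thm-dl-1}, truncate the general data between $\fr{3}{4n}\Psi_{nR,T}$ and $f_i+n$, and transfer convergence back by the kinetic and FDE/PME $L^1$-contractions, with the $\alpha=1$ subsequence caveat handled the same way. The only slip is that your two positive-part inequalities have their right-hand sides interchanged (where $g_{i,n}=f_i+n$ one bounds $[g_i-g_{i,n}]_+$ by $[g_i-f_i-n]_+$, and where $g_{i,n}=\fr{3}{4n}\Psi_{nR,T}$ one bounds $[g_{i,n}-g_i]_+$ by $[f_i-g_i]_+$), but the domination by $|f_i-g_i|$ and hence the $L^1$ convergence of the truncated data is unaffected.
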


\begin{remark}
In the case $(1)$ and $(2)$, if $f_i$ has thicker lower bound $\liminf_{|x|\to\infty}|x|^\fr{2}{\alpha}f_i = \infty$, then we can prove the convergence locally upto infinite time. {\it i.e.} $$\text{ $u_i^\e \rightarrow \fr{\rho}{2n}$ in $L^1_{loc}(\R^n\times(0,\infty))$ and }$$ $$\text{$\rho^\e \rightarrow \rho$ in $C([0,T),L^1_{loc}(\R^n))$ for all $T>0$ under the condition \eqref{eq-compactcondition}}.$$
\end{remark}

So far, we have forcused on obtaining positive lower bounds, and thus our $g_i$ was chosen to be $L^1$-perturbation of $L^\infty$ function $f_i$. However, through the same barrier argument, we may get local upper bound in the same way as we got the lower bound in Proposition \ref{prop-general2-1}. We need to construct supersolutions for this case.   As $\rho$ gets larger, diffusion gets faster in PME range. Therefore, the finite time blow-up arises in PME range with fast growing initial data as the finite time extinction happens in FDE range with fast decaying initial data.\\

   Now we start with the case (5), which gives us PME in the limit. For $\alpha\in[-1,0)$ and $n\ge 2$, we may define $$\overline{\Psi}_{R,T}(x,t)=C_\alpha \left(\fr{T-ct}{|x|^2+R^2}\right)^\fr{1}{\alpha}, \ (C_\alpha)^\alpha=2n^{\alpha-1}\left(n-\fr{2}{\alpha}\right)\quad c>1.$$Then we have: 
\begin{equation}\label{eq-general-10}\begin{aligned}
	\overline{\Psi} _t - \D \cdot(\fr{\D \overline{\Psi}}{n^{1-\alpha}\overline{\Psi}^\alpha}&)=\left[c+\fr{n\alpha}{2-n\alpha}-\fr{2}{2-n\alpha}\fr{|x|^2}{|x|^2+R^2}\right]\fr{-\overline{\Psi}}{\alpha( T-ct)} \\  \quad & >(c-1)\fr{-\overline{\Psi}}{\alpha( T-ct)}>0
	\end{aligned}
	\end{equation}
	on $0<ct<T$. Moreover, 
	\begin{equation}
	\begin{aligned}
|A_i|&\le c_0\fr{\overline{\Psi}}{T-ct}\left(|x|^2+R^2\right)^\fr{1}{2}, &&|\partial_tA_i|\le c_0\fr{\overline{\Psi}}{T-ct^2}\left(|x|^2+R^2\right)^\fr{1}{2},\\
|B_i|&\le c_0\fr{\overline{\Psi}}{T-ct^2}\left(|x|^2+R^2\right), &&|D_{v_i}B_i|\le c_0\fr{\overline{\Psi}}{T-ct^2}\left(|x|^2+R^2\right)^\fr{1}{2},\\
|\partial_t B_i| &\le  c_0\fr{\overline{\Psi}}{T-ct^3}\left(|x|^2+R^2\right),&& |\overline{\Psi}^\alpha A_i|\le c_0 \overline{\Psi}\left(|x|^2+R^2\right)^{-\fr{1}{2}},\\
\text{and }&\quad |\overline{\Psi}^\alpha B_i|\le c_0 \fr{\overline{\Psi}}{T-ct}  
\end{aligned}
\end{equation} on $0<ct<T$ where $c_0=c_0(n,\alpha)$.
\begin{corollary}\label{cor-general2-5}
({\it c.f. Corollary \ref{cor-general-1}}) Suppose $\alpha\in[-1,0)$ and $n\ge 2$. For all $\e>0$ and $\overline{\Psi}_{R,T}$ of Definition above, there exists a family $\{{\overline u_i^\e}\}_{i=1}^{2n}$ which is a supersolution of \eqref{eq-main} satisfying $\fr{1}{4n}\overline{\Psi} \le \overline{ u_i^\e} \le \fr{3}{4n}\overline{\Psi}$ on $(|x|^2+R^2)^\fr{1}{2}<\fr{c}{\e}(T-ct)$ and $t\in[0,\fr{T}{c})$. The constant $c>1$ depends on $n$ and $\alpha$.
\begin{proof}Proof is the same as previous corollaries.
\end{proof}
\end{corollary}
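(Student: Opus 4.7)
The plan is to mirror the construction from Proposition \ref{prop-general-1} and Proposition \ref{prop-general-2}, reversing every inequality so that the ansatz produces a supersolution rather than a subsolution. Concretely, I would set
\[
\overline{u_i^\epsilon} = \frac{1}{2n}\left(\overline{\Psi} + A_i\epsilon + B_i\epsilon^2\right),
\]
with $A_i,B_i$ given by \eqref{eq-AB} applied to $\overline{\Psi}$ in place of $\rho$. Substituting into the formal expansion \eqref{eq-expansion}, the $\epsilon^{-1}$ and $\epsilon^0$ contributions cancel by \eqref{eq-general-0-2} and \eqref{eq-general-0-3}, leaving the driving term $\frac{1}{2n}\left[\partial_t\overline{\Psi}-\nabla\cdot\!\left(\frac{1}{n^{1-\alpha}\overline{\Psi}^\alpha}\nabla\overline{\Psi}\right)\right]$ together with remainders of orders $\epsilon$, $\epsilon^2$, and $\epsilon^3$.

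The sign reversal enters through \eqref{eq-general-10}: since $\alpha<0$, the driving term is strictly positive and bounded below by $c_1\frac{\overline{\Psi}}{T-ct}$ for some $c_1=c_1(c,n,\alpha)>0$. Using the derivative bounds for $A_i,B_i$ and their time derivatives listed immediately before the corollary (which have exactly the same structure as \eqref{eq-general-4-2}), every remainder contribution can be controlled by a multiple of $\frac{\overline{\Psi}}{T-ct}$ times a nonnegative power of $\eta:=\frac{\epsilon(|x|^2+R^2)^{1/2}}{T-ct}$. Restricting to the cone $\eta<c$ and invoking the analogue of the remainder estimate \eqref{eq-remainderestimate} for negative $\alpha$ (which follows from exactly the same Taylor expansion as in Lemma \ref{lem-taylor}, valid whenever the arguments $\frac{A_i+A_j}{2\overline{\Psi}}\epsilon$ and $\frac{B_i+B_j}{2\overline{\Psi}}\epsilon^2$ are both small), I can choose $c=c(n,\alpha)>0$ small enough that the total error is strictly less than $c_1\frac{\overline{\Psi}}{T-ct}$. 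Hence the LHS of \eqref{eq-expansion} evaluated at $\{\overline{u_i^\epsilon}\}$ is nonnegative, which by Definition \ref{def-sol} means $\{\overline{u_i^\epsilon}\}$ is a weak supersolution of \eqref{eq-main} on $\{(|x|^2+R^2)^{1/2}<\frac{c}{\epsilon}(T-ct),\ t<T/c\}$.

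For the sandwich bound $\frac{1}{4n}\overline{\Psi}\le\overline{u_i^\epsilon}\le\frac{3}{4n}\overline{\Psi}$, the same $A_i,B_i$ estimates give $|A_i\epsilon+B_i\epsilon^2|\le c_0\overline{\Psi}(\eta+\eta^2)$, so shrinking $c$ once more so that $c_0(c+c^2)\le\frac{1}{2}$ yields the desired two-sided inequality and completes the argument. I do not anticipate any serious obstacle: the algebra is essentially identical to that in Proposition \ref{prop-general-1} and Proposition \ref{prop-general-2}, and the only conceptual novelty is the flipped sign of the driving term, which is supplied for free by \eqref{eq-general-10} precisely because $\alpha<0$ makes $-\overline{\Psi}/[\alpha(T-ct)]$ positive.
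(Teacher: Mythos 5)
Your proposal is correct and follows essentially the same route as the paper, which simply states that the proof is the same as the preceding corollaries: one repeats the argument of Propositions \ref{prop-general-1} and \ref{prop-general-2} with the ansatz built on $\overline{\Psi}$, using the strict supersolution inequality \eqref{eq-general-10} as the sign-reversed driving term and the $A_i,B_i$ bounds listed before the corollary to absorb the $\e$, $\e^2$, $\e^3$ errors on the cone $\e(|x|^2+R^2)^{1/2}<c(T-ct)$. Your remark that the Taylor remainder estimate of Lemma \ref{lem-taylor} extends to $\alpha<0$ once the perturbation arguments are kept small is exactly the (implicit) point the paper relies on, so no gap remains.
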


In the case (4), $\alpha=0$, we define $$\overline{\Psi}_{R,T}(x,t)=\fr{R^2}{\left(4\pi {(T-t)}\right)^{\fr{n}{2}\overline{c}}}e^{\fr{\hat{c}n|x|^2}{4(T-t)}},\quad 0<\hat{c}<1\text{ and } \overline{c}>\hat{c}$$
\begin{equation}\label{eq-general2-14}
\begin{aligned}
\overline{\Psi} _t - \D\cdot\left(\fr{\D \overline{\Psi}}{n^{1-\alpha}\overline{\Psi}^\alpha}\right)&=\left[\fr{\hat{c}(1-\hat{c})}{2}\fr{|x|^2}{(T-t)^2} + (\overline{c}-\hat{c}) \fr{1}{T-t}\right]\fr{\overline{\Psi}}{2} \\  \quad & >c_1\left[ \fr{1}{T-t}+\fr{|x|^2}{(T-t)^2}\right]\overline{\Psi}>0
\end{aligned}
\end{equation}
	on $0<t<\infty$ and $c_1=c_1(c,\overline{c})=c_1(n,\alpha)$ when $\hat{c}$ and $\overline{c}$ are fixed. Moreover, we have
\begin{equation}\label{eq-general2-15}
	\begin{aligned}
|A_i|&\le c_0\overline{\Psi}\left( \fr{|x|}{T-t} \right), &&\\
|\partial_tA_i|&\le c_0\overline{\Psi}\left(\fr{|x|}{(T-t)^2}+ \fr{|x|^3}{(T-t)^3}\right) = c_0 \left[ \fr{1}{T-t}+\fr{|x|^2}{(T-t)^2}\right]\overline{\Psi} \fr{|x|}{T-t},\\
|B_i|&\le c_0\overline{\Psi}\left(\fr{1}{T-t}+ \fr{|x|^2}{(T-t)^2}\right), \\
|D_{v_i}B_i|&\le c_0\overline{\Psi}\left(\fr{|x|}{(T-t)^2}+ \fr{|x|^3}{(T-t)^3}\right)= c_0 \left[ \fr{1}{T-t}+\fr{|x|^2}{(T-t)^2}\right]\overline{\Psi} \fr{|x|}{T-t},\\
\text{and } &\quad  |\partial_t B_i| \le  c_0\overline{\Psi}\left(\fr{1}{(T-t)^2}+ \fr{|x|^2}{(T-t)^3}+\fr{|x|^4}{(T-t)^4}\right)
\end{aligned}
\end{equation} on $0<t<\infty$ where $c_0=c_0(n,\alpha)$.

Following the similar argument of Proposition \ref{prop-general-1} and \ref{prop-general-2}, therefore, we obtain a version of Corollary \ref{cor-general-1} in the case (4). 

\begin{corollary}
Suppose  $\alpha =0$ and $n\ge2$. There exist $\e(n,\alpha,T)$ such that for all $0<\e<\e(n,\alpha,T)$ and $\overline{\Psi}_{R,T}$ of Definition above, there exists a family $\{{\overline u_i^\e}\}_{i=1}^{2n}$ which is a supersolution of \eqref{eq-main} satisfying $\fr{1}{4n}\overline{\Psi} \le \overline{ u_i^\e} \le \fr{3}{4n}\overline{\Psi}$ on ${|x|}<\fr{c}{\e}(T-t)$ and $t\in[0,T)$. The constant $c$ depends on $n$ and $\alpha$.
\begin{proof}Same as Corollary \ref{cor-general2-3}.
\end{proof}
\end{corollary}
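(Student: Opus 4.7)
The plan is to mirror the proof of Corollary \ref{cor-general2-3} with every sign flipped, since we now seek a supersolution rather than a subsolution. First I would take the ansatz $\overline{u_i^\e}=\fr{1}{2n}(\overline{\Psi}+\e A_i+\e^2 B_i)$ with $A_i,B_i$ built from $\overline{\Psi}$ exactly by the formulas \eqref{eq-AB} (specialized to $\alpha=0$) and substitute into the expansion \eqref{eq-expansion}. By construction of $A_i$, the $\e^{-1}$ line of \eqref{eq-expansion} vanishes identically, and the $\e^0$ line reduces to $\fr{1}{2n}\bigl[\partial_t\overline{\Psi}-\D\cdot(\fr{\D\overline{\Psi}}{n^{1-\alpha}\overline{\Psi}^\alpha})\bigr]$, which by \eqref{eq-general2-14} is strictly positive and bounded below by $c_1\bigl[\fr{1}{T-t}+\fr{|x|^2}{(T-t)^2}\bigr]\overline{\Psi}$. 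So, unlike in the subsolution case, the leading nontrivial term has the right sign to be a supersolution, and my task is to show that all higher-order terms in \eqref{eq-expansion} can be absorbed into this positive reservoir.

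Next I would exploit the fact that $\alpha=0$ kills most of the higher-order terms. Every coefficient in \eqref{eq-expansion} carrying a factor $\alpha$ or $\binom{\alpha}{2}$ or $\binom{\alpha}{3}$ vanishes, and the Taylor remainder $R(\e)$ from Lemma \ref{lem-taylor} is identically zero. What survives is only $\fr{\e}{2n}(\partial_t A_i+D_{v_i}B_i)$ and $\fr{\e^2}{2n}\partial_t B_i$. Plugging in the explicit bounds \eqref{eq-general2-15}, each of these is at most a fixed multiple of $\bigl[\fr{1}{T-t}+\fr{|x|^2}{(T-t)^2}\bigr]\overline{\Psi}$ times combinations of $\fr{\e|x|}{T-t}$ and $\fr{\e}{T-t}$. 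On the region $|x|<\fr{c}{\e}(T-t)$ one has $\fr{\e|x|}{T-t}<c$, and choosing $\e<\e_0(n,\alpha,T)$ so that $\fr{\e}{T-t}\le\fr{\e}{T-t}\cdot T$ is small forces these factors to be arbitrarily small; picking $c$ small and then $\e_0$ small enough makes the total error a small fraction of $c_1\bigl[\fr{1}{T-t}+\fr{|x|^2}{(T-t)^2}\bigr]\overline{\Psi}$. This yields the supersolution inequality on the stated region, and also explains why $\e_0$ must depend on $T$: the coefficient $\fr{1}{T-t}$ is only uniformly small relative to $\fr{1}{\e}$ when $\e/T$ is small.

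Finally, the two-sided sandwich $\fr{1}{4n}\overline{\Psi}\le\overline{u_i^\e}\le\fr{3}{4n}\overline{\Psi}$ is a direct estimate. By \eqref{eq-general2-15}, $|\e A_i|+|\e^2 B_i|\le c_0\overline{\Psi}\bigl[\fr{\e|x|}{T-t}+\e^2\fr{1}{T-t}+\e^2\fr{|x|^2}{(T-t)^2}\bigr]$; on $|x|<\fr{c}{\e}(T-t)$ with $\e<\e_0(n,\alpha,T)$, the bracketed quantity is at most $1/2$ after possibly shrinking $c$ and $\e_0$, which gives the desired two-sided bound. The main obstacle, as in Corollary \ref{cor-general2-3}, is the bookkeeping of the joint smallness requirements on $c$ and $\e/T$: conceptually the argument is a carbon copy of the subsolution case in Corollary \ref{cor-general2-3} with the leading sign reversed, but one must pick $c$ first (to control the $\e A_i,\e^2 B_i$ perturbation and the $\e\partial_t A_i$ terms relative to the spatial part $\fr{|x|^2}{(T-t)^2}\overline{\Psi}$) and then shrink $\e_0$ to handle the purely time-decaying $\fr{1}{T-t}$ contributions, which is precisely what forces the $T$-dependence of the threshold $\e_0(n,\alpha,T)$.
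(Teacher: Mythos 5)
Your overall strategy is the same as the paper's: the paper's proof of this corollary is literally a pointer to Corollary \ref{cor-general2-3}, i.e.\ rerun the expansion \eqref{eq-expansion} for the ansatz built from \eqref{eq-AB} with $\rho$ replaced by $\overline{\Psi}$, use that at $\alpha=0$ the Taylor remainder and every term carrying $\alpha$ or $\binom{\alpha}{2}$ vanish, and absorb the surviving terms $\fr{\e}{2n}(\partial_t A_i+D_{v_i}B_i)$ and $\fr{\e^2}{2n}\partial_t B_i$ into the positive quantity \eqref{eq-general2-14} via \eqref{eq-general2-15}. Your handling of all contributions carrying the factor $\fr{\e|x|}{T-t}$ (choose $c$ small) is correct and needs no $T$-dependence.

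There is, however, a genuine gap in how you dispose of the purely time-decaying contributions. The inequality you invoke, ``$\fr{\e}{T-t}\le\fr{\e}{T-t}\cdot T$ is small,'' is vacuous as written, and the idea behind it --- that taking $\e<\e_0(n,\alpha,T)$ makes $\fr{\e}{T-t}$, or rather $\fr{\e^2}{T-t}$, which is the factor actually produced by $\e^2\partial_t B_i$ and by $\e^2 B_i$ in the sandwich estimate, uniformly small --- fails on the stated region $t\in[0,T)$: unlike the subsolution case of Corollary \ref{cor-general2-3}, where the time factor is $t+T\ge T$, here $T-t$ is not bounded below, so these ratios blow up as $t\to T$ no matter how small $\e_0$ is. Concretely, at $x=0$ and $T-t\lesssim\e^2$ your error bound $c_0\e^2(T-t)^{-2}\overline{\Psi}$ exceeds the reservoir $c_1(T-t)^{-1}\overline{\Psi}$, and $\e^2 B_i=\e^2\partial_{ii}\overline{\Psi}\approx \fr{\hat{c}n\e^2}{2(T-t)}\overline{\Psi}$ is no longer a small fraction of $\overline{\Psi}$, so the claimed bound $\overline{u_i^\e}\le\fr{3}{4n}\overline{\Psi}$ is not obtained there. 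The differential inequality itself can be rescued by a sign observation that pure absolute-value bookkeeping misses: for $\alpha=0$ one has $B_i=\partial_{ii}\overline{\Psi}>0$ and $\partial_t B_i>0$, so the $\e^2$-order term enters \eqref{eq-expansion} with the favorable sign for a supersolution and may simply be discarded; but the two-sided bound still requires excluding the time sliver $T-t\lesssim\e^2$, e.g.\ by proving the statement on $t\in[0,T']$ for each $T'<T$ with $\e_0$ depending on $T'$ --- which is all that is used downstream, since Proposition \ref{prop-general2-3} only invokes the barrier on compact space-time sets with $\e_K$ depending on the set. The paper's one-line proof inherits the same imprecision, but your write-up makes the faulty step explicit, so it needs this repair.
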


Now in remaining cases (1),(2) and (3), since $$\overline{\Psi}_t-\D\cdot \left(\fr{\overline{\Psi}}{n^{1-\alpha}\overline{\Psi}^\alpha}\right) = \overline{\Psi}_t - \left(\fr{n}{\overline{\Psi}}\right)^\alpha \fr{\Delta \overline{\Psi}}{n} + \alpha\left(\fr{n}{\overline{\Psi}}\right)^\alpha\fr{|\D\overline{\Psi}|^2}{n\overline{\Psi}}.$$
It could be observed that  $$\overline{\Psi}_{0,n,R,T}(x,t)=\fr{R^2}{\left(4\pi {(T-t)}\right)^{\fr{n}{2}\overline{c}}}e^{\fr{\hat{c}n|x|^2}{4(T-t)}},\quad 0<\hat{c}<1\text{ and } \overline{c}>\hat{c}$$ is again a supersolution of \eqref{eq-fastdiff} even when $\alpha\in(0,1]$ for  $R>R(\alpha,n,T)>0$. Moreover when $\alpha\in(0,1]$, the same estimates \eqref{eq-general2-14} and \eqref{eq-general2-15} with appropriate $c_0$ and $c_1$ could be obtained if $R$ is large. 

\begin{corollary}
Suppose  $\alpha\in (0,1]$ and $n\ge2$. There exist $\e_0(n,\alpha,T)>0$ and $R(n,\alpha,T)>0$ such that for $\overline{\Psi}_{R,T}$ of \eqref{eq-general2-14} with $0<\e<\e_0(n,\alpha,T)$ and $R>R(n,\alpha,T)>0$  , there exists a family $\{{\overline u_i^\e}\}_{i=1}^{2n}$ which is a supersolution of \eqref{eq-main} satisfying $\fr{1}{4n}\overline{\Psi} \le \overline{ u_i^\e} \le \fr{3}{4n}\overline{\Psi}$ on ${|x|}<\fr{c}{\e}(T-t)$ and $t\in[0,T)$. Constant $c$ depends on $n$ and $\alpha$.
\begin{proof}Same as Corollary \ref{cor-general2-5}.
\end{proof}
\end{corollary}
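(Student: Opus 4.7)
The plan is to follow the template of Corollaries~\ref{cor-general2-3} and~\ref{cor-general2-5}: I would construct $\overline{u_i^\e}=\frac{1}{2n}(\overline{\Psi}+A_i\e+B_i\e^2)$ with $A_i,B_i$ built from $\overline{\Psi}$ via the formulas in \eqref{eq-AB}, then exploit the asymptotic expansion \eqref{eq-expansion} to show that the kinetic defect $\partial_t\overline{u_i^\e}\pm \frac{1}{\e}\nabla_i\overline{u_i^\e}-\frac{1}{2n\e^2}\sum_j(\overline{u_i^\e}+\overline{u_j^\e})^\alpha(\overline{u_j^\e}-\overline{u_i^\e})$ is nonnegative, with the strict positivity furnished by the fact that $\overline{\Psi}_{R,T}$ is a strict supersolution of \eqref{eq-fastdiff} with a quantitative margin. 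The two-sided pointwise bound $\frac{1}{4n}\overline{\Psi}\le\overline{u_i^\e}\le\frac{3}{4n}\overline{\Psi}$ will follow, exactly as in Proposition~\ref{prop-general-2}, once $|A_i|\e$ and $|B_i|\e^2$ are shown to be dominated by $\overline{\Psi}$ on the target region.

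First, I would verify that $\overline{\Psi}_{R,T}$ is a strict supersolution of \eqref{eq-fastdiff} with a lower margin of the shape appearing in \eqref{eq-general2-14}. Using the identity
\[
\partial_t\overline{\Psi}-\nabla\cdot\Bigl(\frac{\nabla\overline{\Psi}}{n^{1-\alpha}\overline{\Psi}^\alpha}\Bigr)=\partial_t\overline{\Psi}-\Bigl(\frac{n}{\overline{\Psi}}\Bigr)^\alpha\frac{\Delta\overline{\Psi}}{n}+\alpha\Bigl(\frac{n}{\overline{\Psi}}\Bigr)^\alpha\frac{|\nabla\overline{\Psi}|^2}{n\overline{\Psi}},
\]
a direct computation of $\partial_t$, $\nabla$, $\Delta$ on the Gaussian $\overline{\Psi}$ produces an expression of the form
\[
\frac{n\overline{\Psi}}{2(T-t)}\Bigl[\overline{c}-\Bigl(\frac{n}{\overline{\Psi}}\Bigr)^\alpha\hat{c}\Bigr]+\frac{n|x|^2\overline{\Psi}}{4(T-t)^2}\Bigl[\hat{c}-(1-\alpha)\Bigl(\frac{n}{\overline{\Psi}}\Bigr)^\alpha\hat{c}^{\,2}\Bigr].
\]
Because $\overline{\Psi}\ge R^2(4\pi T)^{-n\overline{c}/2}$ throughout $\R^n\times[0,T)$, the factor $(n/\overline{\Psi})^\alpha$ is at most $C R^{-2\alpha}$; picking $R=R(n,\alpha,T)$ large enough that both bracketed coefficients remain above fixed positive constants reproduces the bound $c_1\bigl[\frac{1}{T-t}+\frac{|x|^2}{(T-t)^2}\bigr]\overline{\Psi}$ that drove the $\alpha=0$ argument.

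Second, with this strict supersolution property in hand, I would repeat the perturbation argument of Propositions~\ref{prop-general-1}--\ref{prop-general-2}. The expansion \eqref{eq-expansion} decomposes the kinetic defect as $\frac{1}{2n}\bigl[\partial_t\overline{\Psi}-\nabla\cdot\bigl(\nabla\overline{\Psi}/(n^{1-\alpha}\overline{\Psi}^\alpha)\bigr)\bigr]$ plus order-$\e$, order-$\e^2$, and order-$\e^3$ contributions driven by $\partial_tA_i$, $D_{v_i}B_i$, $\partial_tB_i$ and the Taylor remainder $R_{a,b}(\e)$ of \eqref{eq-remainder}. The derivative bounds on $A_i,B_i$ associated to the present $\overline{\Psi}$ are structurally the same as in \eqref{eq-general2-15} with $t+T$ replaced by $T-t$, and the remainder estimate \eqref{eq-remainderestimate} applies whenever $\e|x|/(T-t)$ stays below an absolute threshold. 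On the region $|x|<(c/\e)(T-t)$ with $c$ small and $\e<\e_0(n,\alpha,T)$ small, every higher-order contribution is bounded by $c$ times the positive order-one term from Step~1, so the total defect is $\ge 0$. The two-sided bound then follows from $|A_i|\e\lesssim\overline{\Psi}\cdot\frac{\e|x|}{T-t}$ and $|B_i|\e^2\lesssim\overline{\Psi}\cdot\bigl(\frac{\e|x|}{T-t}\bigr)^2$ on the same region after shrinking $c$ once more.

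The main obstacle is Step~1, which is the genuine new content compared to the $\alpha=0$ case. For $\alpha>0$ the FDE carries the nonlinear coefficient $(n/\overline{\Psi})^\alpha$ which, if $\overline{\Psi}$ were allowed to become small, would destroy the positivity of the $|x|^2/(T-t)^2$-coefficient above; the Gaussian $\overline{\Psi}$ is no longer automatically a supersolution of the limit equation. The cure is precisely the requirement $R>R(n,\alpha,T)$: since $(n/\overline{\Psi})^\alpha$ is controlled by $R^{-2\alpha}$, enlarging $R$ tames the nonlinearity uniformly in $(x,t)$ and restores the margin. Once that is secured, the remainder of the argument is a verbatim adaptation of the $\alpha=0$ proof.
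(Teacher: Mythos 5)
Your proposal is correct and follows essentially the paper's own route: the paper's proof is simply ``same as Corollary~\ref{cor-general2-5}'', resting on the observation (stated there without computation) that for $R>R(n,\alpha,T)$ the Gaussian $\overline{\Psi}_{R,T}$ remains a strict supersolution of \eqref{eq-fastdiff} with a margin of the form \eqref{eq-general2-14} and obeys estimates of the form \eqref{eq-general2-15}, after which the perturbation argument of Propositions~\ref{prop-general-1}--\ref{prop-general-2} is repeated on the region $|x|<\frac{c}{\e}(T-t)$. Your Step~1 is precisely the computation the paper leaves implicit (the control $(n/\overline{\Psi})^\alpha\le C R^{-2\alpha}$ keeping both bracketed coefficients positive for large $R$), and your Step~2 is the same reduction to the earlier corollaries that the paper invokes.
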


It is not hard to show a proposition which corresponds to Proposition \ref{prop-general2-1} in the upper bound case. 
\begin{prop}\label{prop-general2-3}
For $n\ge2$ and $|\alpha|\le 1$, suppose mild solution, $u_i^\e$, of \eqref{eq-main}    has initial data $g_i \in L_{loc}^1(\R^n)$ with upper bound $g_i\le \fr{1}{4n}\overline{\Psi}_{R,T}$ for some $R$ and $T>0$. Then there exists universal constants $\overline{C}=\overline{C}(n,\alpha)>0$ such that for every space-time compact set $K\subset \R^n\times[0,\overline{C}T)$, there exist $\e_K>0$ satisfying $u_i^\e \le \fr{3}{4n}\overline{\Psi}_{R,T}$ on K for $0<\e<\e_K$.
\end{prop}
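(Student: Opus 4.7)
The plan is to mirror the proof of Proposition \ref{prop-general2-1} with every inequality reversed: instead of trapping the kinetic solutions from below by a subsolution family near $\Psi$, I would trap them from above by a supersolution family near $\overline{\Psi}_{R,T}$. For each of the five cases of $\alpha$, the three corollaries immediately preceding the proposition (Corollary \ref{cor-general2-5} and its two analogues for $\alpha = 0$ and for $\alpha \in (0,1]$) furnish an explicit family $\{\overline{u_i^\e}\}_{i=1}^{2n}$ which is a supersolution of \eqref{eq-main} on a region $K_2^\e$, analogous in shape to \eqref{eq-domain} but now built from $\overline{\Psi}$ and $T-ct$ (or the case-dependent analogue), and which satisfies the two-sided sandwich $\frac{1}{4n}\overline{\Psi}_{R,T} \le \overline{u_i^\e} \le \frac{3}{4n}\overline{\Psi}_{R,T}$ there.

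Following the template of Proposition \ref{prop-general2-1}, I would first fix $\overline{C} = \overline{C}(n,\alpha) > 0$ as a suitable multiple of the constants $c$ appearing in these corollaries, in the same spirit as the choice $C = \frac{1}{2}\min(\overline{c},1/c)$ made there, so that for every compact $K = Q_{r,t_0}$ with $t_0 < \overline{C}T$ the finite-speed cone $Q_{r + t_0/\e,\,t_0}$ sits inside $K_2^\e$ as soon as $\e < \e_K$ for a suitable $\e_K$. At the initial time the hypothesis gives $g_i \le \frac{1}{4n}\overline{\Psi}_{R,T}(\cdot,0) \le \overline{u_i^\e}(\cdot,0)$ pointwise on $\R^n$; applying the comparison principle of Theorem \ref{thm-MP}(ii) to the subsolution $u_i^\e$ and the supersolution $\overline{u_i^\e}$ then propagates this ordering into $K$ and yields $u_i^\e \le \overline{u_i^\e} \le \frac{3}{4n}\overline{\Psi}_{R,T}$ on $K$, which is exactly the conclusion.

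The main bookkeeping obstacle I anticipate is that the supersolution corollaries in the fast-diffusion range $\alpha \in [0,1]$ require additional restrictions $0 < \e < \e_0(n,\alpha,T)$ and, when $\alpha > 0$, a lower bound $R > R(n,\alpha,T)$ for $\overline{\Psi}_{0,n,R,T}$ to be a supersolution of \eqref{eq-fastdiff}. The first merely further shrinks the already $K$-dependent $\e_K$ and is harmless. The second is handled by observing that in those cases $\overline{\Psi}_{R,T}$ equals the Gaussian $\overline{\Psi}_{0,n,R,T}$, which scales with $R^2$ and is therefore monotone increasing in $R$; enlarging $R$ to $R' = \max(R,R(n,\alpha,T))$ preserves the hypothesis $g_i \le \frac{1}{4n}\overline{\Psi}_{R',T}$, and one runs the argument with this $R'$. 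A parallel check is needed in case $(5)$ where, since $\alpha < 0$ and $1/\alpha < 0$, one still has $\overline{\Psi}_{R,T}$ increasing in $R$, so the same device works. Once these routine case-by-case compatibility checks are in place, the remainder of the proof is a direct transcription of Proposition \ref{prop-general2-1}, with the role of "subsolution above $\frac{1}{4n}\Psi$" played by "supersolution below $\frac{3}{4n}\overline{\Psi}$" and the corresponding comparison reversed.
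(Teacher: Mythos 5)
Your proposal is correct and is essentially the argument the paper intends: the paper offers no separate proof of this proposition, merely asserting it is the upper-bound analogue of Proposition \ref{prop-general2-1}, obtained exactly as you describe from the supersolution corollaries (Corollary \ref{cor-general2-5} and the two that follow it), the cone containment coming from finite speed of propagation, and the comparison principle of Theorem \ref{thm-MP}(ii) applied with $u_i^\e$ below and $\overline{u_i^\e}$ above. Your only deviation, the replacement $R\mapsto R'=\max(R,R(n,\alpha,T))$ when $\alpha\in(0,1]$, literally yields $u_i^\e\le \frac{3}{4n}\overline{\Psi}_{R',T}$ rather than $\frac{3}{4n}\overline{\Psi}_{R,T}$; but the statement is implicitly meant for $R$ in the admissible range where $\overline{\Psi}_{R,T}$ is a supersolution, and in the only place the proposition is used (the proof of Theorem \ref{thm-mmainthm}, with $R=m\to\infty$) either reading gives the needed locally uniform upper bound, so this is a harmless bookkeeping point rather than a gap.
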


We can formulate our main result, Theorem \ref{thm-mmainthm}.

\begin{subsection} {Proof of Theorem \ref{thm-mmainthm}}  

The proof would be very similar to Proposition \ref{prop-general2-2}. First, from the definition of $T_1$ in Theorem \ref{thm-mmainthm}, there exist $C_1(n,\alpha)>0$ such that $[\fr{3}{4n}\Psi_{1,T}-g_i]_+\in L^1(\R^n)$  for all $0<T<C_1T_1$. Similarily, there exist $C_2(n,\alpha)>0$ such that $[g_i-\fr{1}{4n}\overline{\Psi}_{1,T}]_+\in L^1(\R^n)$ for all $T<C_2T_{2}$. Thus for $0<T<min(C_1T_1,C_2T_2)$, if we define $$g_{i,m}:=\begin{cases}\begin{aligned} &\fr{1}{4n}\overline{\Psi}_{m,T}&&\text{if }\fr{1}{4n}\overline{\Psi}_{m,T}\le g_i\\&g_i &&\text{if }\fr{3}{4n}\Psi_{m,T}< g_i\le\fr{1}{4n}\overline{\Psi}_{m,T}
 \\&\fr{3}{4n}\Psi_{m,T}   &&otherwiese,\end{aligned}\end{cases}$$ then we have $g_{i,m}\to g_i$ in $L^1(\R^n)$ as $n\to \infty$. 

We are going to use notations $u^\e_{i,n}$ and $\rho_n$ defined in Proposition \ref{prop-general2-2}. For each fixed $m\in\N$ and $0<T<min(CC_1, \overline{C}C_2)\cdot min(T_1,T_2)$, if $K$ is given compact set in $\R^n\times [0,T]$, there exist $\e_K>0$ such that $\fr{1}{4n}\Psi_{m,T}\le u^\e_{i,m} \le \fr{3}{4n}\overline{\Psi}_{m,T}$ on $K$ for $0<\e<\e_K$ by Proposition \ref{prop-general2-1} and \ref{prop-general2-3}. Constants $C$ and $\overline{C}$ came from the two propositions. Using these local upper and lower bounds, it is clear that $u_{i,m}^\e \rightarrow \fr{\rho_m}{2n}$ in $L^2_{loc}(\R^n\times(0,T))$ and $\rho^\e_m \rightarrow \rho_m$ in $C([0,T],L^1_{loc}(\R^n))$ convergence under the condition \eqref{eq-compactcondition}.

Finally, $L^1$-contraction among solutions of \eqref{eq-main} and solutions of \eqref{eq-fastdiff} says $$\sum_i  \left\Vert u^\e_{i,m}(t)-u^\e_i(t)\right\Vert_1 \le \sum_i \left\Vert g_{i,m}-g_i\right\Vert_1 \to 0\text{, \quad and} $$ $$\left\Vert \rho_m(t)-\rho(t)\right\Vert_1\le \big\Vert \sum_i(g_{i,m}-g_i)\big\Vert_1 \to 0$$ as $m\to\infty$ and this proves the theorem.

\begin{remark}
In order to construct barriers, we have assumed that the initial data have lower bounds $O\left({|x|^{-\fr{2}{\alpha}}}\right)$ for $\alpha\ge \fr{2}{n}$ or upper bounds  $O\left({|x|^{-\fr{2}{\alpha}}}\right)$ for $\alpha<0$ to obtain estimates corresponding to \eqref{eq-general-4-1} and \eqref{eq-general-4-2}. Those decay and growth rates are kinds  of optimal rates where our method works. 

On the other hand, those rates are kinds of crucial rartes in the theory of FDE and PME. When $\alpha>\fr{2}{n}$, initial data with this decay profiles are typical members of Marcinkiewicz space $M^{\fr{n\alpha}{2}}(\R^n)$. This is considered as natural exitinction space of FDE. {\it i.e.} it is a critical rate where extiction behavior take place. One can refer \cite{V2} for more detailed theory about extinction behavior and role of  Marcinkiewicz space in that theory. When $\alpha<0$, growth rate $|x|^{-\fr{2}{\alpha}}$ is critical rate in blow-up phenomeonon and existence theory. It is known that \eqref{eq-fastdiff} with initial data satisfying $\int_{B_R}\rho_0 dx \sim O(R^{n-\fr{2}{\alpha}})$ has an existence of solution in a short time and $o(R^{n-\fr{2}{\alpha}})$ has a long time existence. Notice that our growth assumption in this paper is slightly restrictive than this integral growth assumption above. More detail could be found at standard texts such as \cite{DK} and \cite{V}.
\end{remark}

\end{subsection}

\begin{subsection}{Notes on $n=1$ and other interaction rates}

When $n=1$ and $|\alpha|\le 1$, our equation is exactly same as the equation investigated in \cite{SV}. For the case $\alpha>1$, the system has no $L^1$-contraction lemma. Even in this case, we have comparison principle if solutions are locally bounded, \cite{SV} \cite{NH}. For $1<\alpha<2$, there is a family of global in time, source type Barenblatt solution of target ultra fast diffusion equation. Using this and the same method we used, we may prove diffusive limit convergence if initial data has a lower bound of Barenblatt type profile. For $\alpha\ge 2$, target equation has a Barenblatt type solution which is not in $L^1$. This corresponse to $\fr{2}{n}\le \alpha\le 1$ cases of higher dimension. Thus, we may prove convergence for a short time, whose length depending on tail thickness. Unlike other higher dimensional $L^1$ contractive case ,  we may not weaken the initial condition as $L^1$ purtabation of the Barenblatt profile. We can also deal with growing unbounded initial data in the same way. \\

On the other hand, there are other interaction terms which give us the same target FDE or PME for $n\ge1$. First of all, we may use interaction term proposed in \cite{TL} in their higher dimension model: 
\begin{equation}A(x)\left(\begin{array}{c}u_1\\u_2\\\vdots\\u_{2n}\end{array}\right)=
\left(\begin{array}{c}\sum_{j=1}^{2n}\rho^\alpha(u_j-u_1)\\ \sum_{j=1}^{2n}\rho^\alpha(u_j-u_2)\\ \vdots \\\sum_{j=1}^{2n}\rho^\alpha(u_j-u_{2n}) \end{array}\right)=
\left(\begin{array}{c}\rho^\alpha(\rho-2nu_1)\\ \rho^\alpha(\rho-2nu_2)\\ \vdots \\\rho^\alpha(\rho-2nu_{2n}) \end{array}\right).
\end{equation}
By taking derivative of $f(x,y)=(x+y)^\alpha(x-(2n-1)y)$ w.r.t. $x, y>0$, we can prove above system has $L^1$-contraction for $-1\le\alpha\le \fr{1}{2n-1}$ and comparison priciple for other $\alpha$ among locally bounded solutions. Next, \begin{equation}A(x)\left(\begin{array}{c}u_1\\u_2\\\vdots\\u_{2n}\end{array}\right)=
\left(\begin{array}{c}u_j^{(\alpha+1)}-u_1^{(\alpha+1)}\\ u_j^{(\alpha+1)}-u_2^{(\alpha+1)}\\ \vdots \\u_j^{(\alpha+1)}-u_{2n}^{(\alpha+1)} \end{array}\right)\end{equation} gives $L^1$-contraction for $\alpha \ge -1$ and comparison principle for other $\alpha$. The same method could be employed to prove limit convergence and limit equation will be the same FDE and PME $\rho_t=\Delta \rho^{1-\alpha}$ which are the same upto constant factors. 

\end{subsection}

\section*{Acknowledgement}
We would like to thank L.C. Evans for his encouragement to this project.
Beomjun Choi has been supported by National Institute for Mathematical Sciences (NIMS) grants funded by the
Korea government (No. A21502). Ki-Ahm Lee  was supported by the National Research Foundation of Korea(NRF) grant funded by the Korea government(MSIP) \\ (No.2014R1A2A2A01004618).  Ki-Ahm Lee also hold a joint appointment with the Research Institute of Mathematics of Seoul National University.

\end{document}